\def\ba{\begin{array}}
\def\ea{\end{array}}
\newcommand{\beq}{\begin{equation}}
\newcommand{\eeq}{\end{equation}}
\newcommand{\bq}{\begin{eqnarray}}
\newcommand{\eq}{\end{eqnarray}}
\newcommand{\bqn}{\begin{eqnarray*}}
\newcommand{\eqn}{\end{eqnarray*}}
\newcommand{\bee}{\begin{enumerate}}
\newcommand{\eee}{\end{enumerate}}
\newcommand{\bi}{\begin{itemize}}
\newcommand{\ei}{\end{itemize}}
\newcommand{\ii}{\textbf{i}}
\newcommand{\slow}[1]{\ifthenelse{\boolean{showcomments}}
{ \textcolor{red}{(Steven says:  #1)}}{}}
\newtheorem{theorem}{Theorem}
\newtheorem{lemma}[theorem]{Lemma}
\newtheorem{corollary}[theorem]{Corollary}
\newtheorem{remark}{Remark}
\begin{document}

\markboth{IEEE Trans. on Control of Network Systems, 1(1):15--27, March 2014
(with proofs)}
{IEEE Trans. on Control of Network Systems, 1(1):15--27, March 2014
(with proofs)}

\title{Convex Relaxation of Optimal Power Flow\\
{\huge Part I: Formulations and Equivalence}
\thanks{
\hspace{-0.34in}
\textbf{Citation:
\emph{IEEE Transactions on Control of Network Systems, 15(1): 15--27, 
March 2014.}
}
This is an extended version with Appendices VIII and IX that provide some 
mathematical preliminaries and proofs of the main results.
All proofs can be found in their original papers.  We provide proofs here because
(i) it is convenient to have all proofs in one place and in a uniform notation, 
and (ii) some of the formulations and presentations here are slightly different
from those in the original papers.
\newline
A preliminary and abridged version has appeared in 
% \cite{Low2013}.
Proceedings of the IREP Symposium - Bulk Power System Dynamics and Control - IX, Rethymnon, Greece, August 25-30, 2013. 
}
}
\author{Steven H. Low \\ Electrical Engineering, Computing+Mathematical Sciences \\
	Engineering and Applied Science, Caltech \\ 
	slow@caltech.edu \\
}
% \date{February 9, 2013}
% \small{June 8, 2013}
% \date{September 20, 2013}
% \date{March 3, 2014}
\date{April 15, 2014}
\maketitle

\vspace{-0.5in}
\begin{center}
April 15, 2014
\end{center}
\vspace{0.5in}

\begin{abstract}
This tutorial summarizes recent advances in the convex relaxation of the optimal
power flow (OPF) problem, focusing on structural properties rather than algorithms.
 Part I presents two power flow models, formulates OPF and their relaxations in each 
 model, and  proves equivalence relations among them.
 Part II presents sufficient conditions under which the convex relaxations are
 exact. 
\end{abstract}

\newpage
 \tableofcontents
 \vfill{
\noindent
\textbf{Acknowledgment.}
We thank the support of NSF through NetSE CNS 0911041, 
ARPA-E through GENI DE-AR0000226, 
Southern California Edison, 
the National Science Council of Taiwan through
NSC 103-3113-P-008-001,
the Los Alamos National Lab (DoE),
and Caltech's Resnick Institute.
}
 \newpage

\begin{comment}
\slow{What is rank-1?  Rank-1 = agreement of power-voltage-current magnitudes + 
	agreement of phase angles around cycles.  Consequence of Theorem \ref{thm:rank1}.}

Bolognani and Zampieri (2013) \cite{Bolognani2013} uses a different approximation to obtain a
convex OPF, in the context of optimal reactive power optimization, and solves it using a distributed
(gossip-like) algorithm.
\end{comment}

%%%%%%%%%%%     %%%%%%%%%%%%%%%%%%%%%%%%%

\section{Introduction}
\label{sec:intro1}

For our purposes an optimal power flow (OPF) problem is a mathematical program that
seeks to minimize a certain function, such as total power loss, generation cost 
or user disutility, subject to the Kirchhoff's laws as well as capacity, 
stability and security constraints.  
OPF is fundamental in power system operations as it underlies
many applications such as economic dispatch, unit commitment, state estimation,
stability and reliability assessment, volt/var control, demand response, etc.
There has been a great deal of research 
on OPF since Carpentier's first formulation in 1962 \cite{Carpentier62}.
An early solution appears in \cite{Dommel1968} and extensive surveys can be found in
e.g. \cite{Powerbk, Huneault91,Momoh99a,Momoh99b,Pandya08, Frank2012a, Frank2012b,
OPF-FERC-1, OPF-FERC-2, OPF-FERC-3, OPF-FERC-4, OPF-FERC-5}.

Power flow equations are quadratic and hence OPF can be formulated
as a quadratically constrained quadratic program (QCQP).   It is generally nonconvex and hence
NP-hard.   A large number of optimization algorithms and relaxations have been proposed.
A popular approximation is a linear program, called DC OPF, obtained through the
 linearization of the power flow equations
e.g. \cite{Stott1974,Alsac1990, Overbye2004, Purchala2005,Stott2009}.
See also \cite{Coffrin2012} for a more accurate linear approximation.
To the best of our knowledge solving OPF through semidefinite relaxation is first proposed in 
\cite{Jabr2006} as a second-order
cone program (SOCP) for radial (tree) networks and in \cite{Bai2008} as a semidefinite program 
(SDP) for general networks in a bus injection model.  It is first proposed 
in \cite{Farivar2011-VAR-SGC, Farivar-2013-BFM-TPS} as an SOCP for radial 
networks in the branch flow model of \cite{Baran1989a, Baran1989b}.
See Remark \ref{remark:bioCoReBIM} below for more details.
While these convex relaxations have been illustrated numerically in \cite{Jabr2006} and
\cite{Bai2008}, whether or when they will turn out to be exact is first studied in \cite{Lavaei2012}.
Exploiting graph sparsity to simplify the SDP relaxation of OPF is first proposed in 
\cite{Bai2011, Jabr2012} and analyzed in \cite{MolzahnLesieutre2013, Bose-2014-BFMe-TAC}.

Convex relaxation of quadratic programs has been applied to many engineering problems; 
see e.g. \cite{luo10}.  There is a rich theory and extensive empirical experiences.   
Compared with other approaches, solving OPF through convex relaxation offers several
\emph{advantages}.  First, while DC OPF is useful in a wide variety of applications, it is not
applicable in other applications;
see Remark \ref{remark:LDFvsDC}.
%
% because it assumes the lines are lossless, voltage magnitudes
% are fixed, phase difference across a line is small, and it ignores reactive power.  These assumptions
% are reasonable for transmission networks, but not for distribution networks or for applications
% such as volt/var control that optimize reactive powers to stabilize voltage magnitudes.
Second a solution of DC OPF may not be feasible (may not satisfy the nonlinear power flow equations).
In this case an operator may tighten some constraints in DC OPF and solve again.  This may not only
reduce efficiency but also relies on heuristics that are hard to scale to larger systems or faster control
in the future.
Third, when they converge, most nonlinear algorithms compute a local optimal usually without  
assurance on the quality of the solution.   In contrast a convex relaxation provides for the first
time the ability to check if a solution is globally optimal.   If it is not, the solution provides a lower
bound on the minimum cost and hence a bound on how far any feasible solution is from optimality.
Unlike approximations, if a relaxed problem is infeasible, it is a certificate that the original OPF is
infeasible.

This two-part tutorial explains the main theoretical results on semidefinite relaxations of OPF developed in the
last few years.
Part I presents two power flow models that are useful in different situations,
formulates OPF and its convex relaxations in each model, and clarifies their relationship.
Part II \cite{Low2014b}
presents sufficient conditions that guarantee the relaxations are exact, i.e. when one can
recover a globally optimal solution of OPF from an optimal solution of its relaxations.
We focus on basic results using the simplest OPF formulation and does not cover 
many relevant works in the literature, such as stochastic OPF e.g. 
\cite{Varaiya-2011-RiskLimiting-PIEEE, Vrakopoulou2013, BentBienstock2013},
distributed OPF e.g. \cite{KimBaldick1997, Baldick1999, HugAndersson2009, Nogales2003, lam2012distributed, KraningBoyd2013}, 
new applications e.g. \cite{Turitsyn11,  LamZhang2012},
or what to do when relaxation fails e.g. \cite{Phan2012, Gopalakrishnan2012, JoszPanciatici2013},
to name just a few.

% \vspace{0.1in}
% \no indent
% \textbf{Summary. }

\subsection{Outline of paper}

Many mathematical models have been used to model power networks.  
In Part I of this two-part paper we present two such models, we call the bus injection model (BIM) and the branch
flow model (BFM).  Each model consists of a set of power flow equations.
Each models a power network in that the solutions of each set of equations, called the power flow solutions,
describe the steady state of the network.
% and define the feasible set of OPF.  
We prove that these two models are equivalent in the sense that there is 
a bijection between their solution sets (Section \ref{sec:models}).
We formulate OPF within each model where the power flow solutions define the feasible set of OPF
(Section \ref{sec:opf}).
Even though BIM and BFM are equivalent some results are much easier to formulate or prove
in one model than the other; see Remark \ref{remark:BIMeqBFM} in Section \ref{sec:models}.

The complexity of OPF formulated here lies in the nonconvexity of power flow equations that 
gives rise to a nonconvex feasible set of OPF.   We develop various characterizations of the
feasible set and design convex supersets based on these characterizations.  
Different designs lead to different convex relaxations and we prove their relationship 
(Sections \ref{sec:rBIM} and \ref{sec:rBFM}).
When a relaxation is exact an optimal solution of
the original nonconvex OPF can be recovered from \emph{any} optimal solution of the
relaxation. 
 In Part II \cite{Low2014b} 
we present sufficient conditions that guarantee the exactness of convex relaxations.

%%%%%%%%%
%
% More specifically, since OPF is a nonconvex QCQP, a standard relaxation technique is to \emph{lift} the problem
% from the space of $n$ dimensional complex vectors 
% to the space of complex positive semidefinite matrices (SDP relaxation) and then \emph{project} a 
% solution of the relaxation back to the original space of vectors, e.g. \cite{Boyd2004, wolkowicz00}.
% When the underlying graph is sparse, various techniques have been developed to reduce the 
% computational complexity of solving SDP.  One of them, based on the fundamental result of 
% \cite{Grone1984} that every positive semidefinite partial matrix has a positive semidefinite completion 
% if and only if the underlying graph is chordal. In that case there is a unique positive definite completion whose determinant is maximal. Theorem 2 below extends this to rank-1 (partial) matrices.
%
%%%%%%%%%%%%%%%%

Branch flow models are originally proposed for networks with a tree topology, called \emph{radial
networks}, e.g. \cite{Baran1989a, Baran1989b, ChiangBaran1990, Chiang1991,
Cespedes1990, Exposito1999, Farivar2011-VAR-SGC, Taylor2012-TPS, LavaeiTseZhang2013}.
They take a recursive structure that simplifies the computation of power flow solutions,
e.g. \cite{Kersting2002, Shirmohammadi1988, ChiangBaran1990}.   The model
of \cite{Baran1989a, Baran1989b} also has a linearization that offers several 
advantages over DC OPF in BIM; see Remark \ref{remark:LDFvsDC}.
The linear approximation provides simple bounds on the branch powers and voltage magnitudes in the nonlinear BFM (Section \ref{sec:BFMt}).
These bounds are used in \cite{Gan-2014-BFMt-TAC} to prove a sufficient condition for exact relaxation.

We make algorithmic recommendations in Section \ref{sec:conc1} based
on the results presented here.

This extended version differs from the journal version only in the addition of two Appendices.
Appendix VIII provides some mathematical preliminaries and Appendix IX
proofs of all main results.  
Even though all proofs can be found in their original papers,
we provide proofs here because (i) it is convenient to have all proofs in one place
and in a uniform notation, 
and (ii) some of the formulations and presentations here are slightly different
from those in the original papers.

\subsection{Notations}

Let $\mathbb{C}$ denote the set of complex numbers, $\mathbb{R}$ the set of real numbers,
and $\mathbb N$ the set of integers.
For $a\in \mathbb C$, Re$\,\, a$ and Im$\,\, a$ denote the real and imaginary parts of $a$ 
respectively.
For any set $A \subseteq \mathbb C^n$,  conv$\, A$ denotes the convex hull of $A$.
For $a\in \mathbb R$, $[a]^+ := \max\{ a, 0 \}$.
For $a, b \in \mathbb C$, $a\leq b$ means Re$\,\, a \leq \,\,\,$Re$\,\, b$ and 
Im$\,\, a \leq \,\,\,$Im$\,\, b$.
We abuse notation to use the same symbol $a$ to denote either a complex number
Re$\, a + \ii \, $Im$\, a$ or a 2-dimensional real vector $a = $(Re$\, a$, Im$\, a$)
depending on the context.

In general scalar or vector variables are in small letters, e.g. $u, w, x, y, z$.  
Most power system quantities however are in capital letters, 
e.g. $S_{jk}, P_{jk}, Q_{jk}, 	I_j, V_j$.
A variable without a subscript  denotes a vector with appropriate components,
e.g. $s := (s_j, j=0, \dots, n)$, $S := (S_{jk}, (j,k)\in E)$.   
% For a vector $a = (a_1, \dots, a_k)$, $a_{-i}$ denotes $(a_1, \dots, a_{i-1}, a_{i+1}, a_k)$.
For vectors $x, y$, $x\leq y$ denotes componentwise inequality.  

Matrices are usually in capital letters.   The transpose of a matrix $A$ is denoted by $A^T$
 and its Hermitian (complex conjugate) transpose by $A^H$.  A matrix $A$ is Hermitian
if $A = A^H$.  $A$ is positive semidefinite (or psd), denoted by $A \succeq 0$,
if  $A$ is Hermitian \emph{and} $x^H A x \geq 0$ for all $x\in \mathbb C^n$;
in particular if $A\succeq 0$ then by definition $A = A^H$.
For matrices $A, B$, $A \succeq B$ means $A-B$ is psd. 
Let $\mathbb S^n$ be the set of all $n\times n$ Hermitian matrices and
$\mathbb S^n_+$ the set of $n\times n$ psd matrices.

A graph $G = (N, E)$ consists of a set $N$ of nodes and a set
$E \subseteq N\times N$ of edges.   If $G$ is undirected then $(j, k)\in E$ if and
only if $(k,j)\in E$.   If $G$ is directed then $(j, k)\in E$ only if $(k,j)\not\in E$;
in this case we will use $(j,k)$ and $j\rightarrow k$ interchangeably to denote
an edge pointing from $j$ to $k$. 
We sometimes use $\tilde G = (N, \tilde E)$ to denote a directed graph.
By ``$j\sim k$'' we mean an edge $(j,k)$ if $G$ is undirected and
either $j \rightarrow k$ or $k\rightarrow j$ if $G$ is directed.
Sometimes we write $j\in G$ or $(j,k)\in G$ to mean $j\in N$ or $(j,k)\in E$ 
respectively.
A \emph{cycle} $c := (j_1, \dots, j_K)$ is an ordered set of nodes $j_k\in N$
so that $(j_k, j_{k+1}) \in E$ for $k=1, \dots, K$ with the understanding that
$j_{K+1} := j_1$.  In that case we refer to a link or a node in the cycle by 
$(j_k, j_{k+1})\in c$ or $j_k\in c$ respectively.

\section{Power flow models}
\label{sec:models}

In this section we describe two mathematical models of power networks and prove
their equivalence.
By a ``mathematical model'' we mean a set of variables and a set of equations relating
these variables.  These equations are motivated by the physical system, but mathematically,
they are the starting point from which all claims are derived.

\subsection{Bus injection model}

Consider a power network  modeled by a connected undirected graph 
$G(N^+, E)$ where $N^+ := \{0\} \cup N$, $N := \{1, 2, \ldots, n \}$,
and $E \subseteq N^+ \times N^+$.   Each node in $N^+$ represents a bus 
and each edge in $E$ represents a transmission or distribution line.  
We use ``bus'' and ``node'' interchangeably and ``line'' and ``edge'' interchangeably. 
For each edge $(i,j)\in E$
let $y_{ij} \in \mathbb C$ be its admittance.
A bus $j \in N^+$ can have a generator, a load, both or neither.  
Let $V_j$ be the complex voltage at bus $j \in N^+$ and $|V_j|$ denote its magnitude.
 Bus 0 is the \emph{slack bus}.  Its voltage is fixed and we assume without loss of generality
 that $V_0 = 1\angle 0^\circ$ per unit (pu).
Let $s_j$ be the net complex power injection
(generation minus load) at bus $j\in N^+$.
% We use $s_j$ to denote both the complex number $p_j + \ii q_j \in \mathbb C$ and 
% the real pair $(p_j, q_j) \in \mathbb R^2$ depending on  the context.

The \emph{bus injection model} (BIM) is defined by the following power flow equations
that describe the Kirchhoff's laws: 
\bq
s_j  & = & \sum_{k: j\sim k} y_{jk}^H\ V_j (V_j^H - V_k^H),
\ \ \  j \in N^+
\label{eq:bim.1}
\eq
Let the set of power flow solutions $V$ for each $s$ be:
\bqn
\mathbb{V}(s) & :=  & \{ V \in \mathbb{C}^{n+1} \ \vert \ V \text{ satisfies } \eqref{eq:bim.1} \}
\eqn
For convenience we include $V_0$ in the vector variable $V := (V_j, j\in N^+)$ with the
understanding that $V_0 := 1 \angle 0^\circ$ is fixed.

\begin{remark}
\label{remark:1}
\emph{Bus types.}
Each bus $j$ is characterized by two complex variables $V_j$ and $s_j$, or
equivalently, four real
variables.   The buses are usually classified into three types, depending on which two
of the four real variables are specified.  For the slack bus 0, $V_0$
is given and $s_0$ is variable.   For a \emph{generator bus} (also called $PV$-bus),
Re$(s_j) = p_j$ and $|V_j|$ are specified and Im$(s_j) = q_j$ and $\angle V_j$ are
variable.   For a \emph{load bus} (also called $PQ$-bus), $s_j$ is specified and $V_j$
is variable.
The \emph{power flow} or \emph{load flow} problem is: given two of the four real variables
specified for each bus, solve the $n+1$ \emph{complex} equations in \eqref{eq:bim.1} for 
the remaining $2(n+1)$ real variables.  
For instance when all $n$ buses $j\neq 0$ are all load buses,
the power flow problem solves \eqref{eq:bim.1} for the $n$ complex voltages $V_j, j\neq 0$, 
and the power injection $s_0$ at the slack bus 0.
This can model a distribution system with a substation at bus 0 and $n$ 
constant-power loads at the other buses.
For \emph{optimal power flow} problems $p_j$ and $|V_j|$ on generator buses or 
$s_j$ on load buses can be variables as well.
For instance economic dispatch optimizes real power generations $p_j$
at generator buses; demand response optimizes demands $s_j$ at load buses; and
volt/var control optimizes reactive powers $q_j$ at capacitor banks, tap changers, or inverters.
These remarks also apply to the branch flow model presented next.
\end{remark}

%%%%%%%%%%%%%%%%%%%%%%%%%%%%%%%%%%%%
\subsection{Branch flow model}
\label{sec:bfm}

In the branch flow model we adopt a connected directed graph 
$\tilde G = (N^+, \tilde E)$ where each
node in $N^+ := \{0, 1, \dots, n\}$ represents a bus and each edge in
$\tilde E \subseteq N^+ \times N^+$ represents a
transmission or distribution line.   Fix an arbitrary orientation for $\tilde G$ and let 
 $m := |\tilde E|$ be the number of directed edges in $\tilde G$.
%
% We adopt the following orientation for $\tilde G$ for convenience (only).
% Pick {\em any} spanning tree $T := (N, \tilde{E}_T)$ of $G$  rooted at the slack bus 1,
% i.e., $T$ is connected and $\tilde{E}_T \subseteq \tilde{E}$ has $n-1$ edges.  
% All edges in $\tilde{E}_T$ point away from the root.   For any edge in 
% $\tilde E\setminus \tilde{E}_T$ that is not in the spanning tree $T$, pick an arbitrary direction.    
Denote an edge by $(j, k)$ or $j\rightarrow k$ if it points from node $j$ to node $k$.    
For each edge $(j,k)\in \tilde E$ let $z_{jk}:= 1/y_{jk}$ be the complex impedance on the line;
 let $I_{jk}$ be the complex current and 
$S_{jk} = P_{jk} + \ii Q_{jk}$ be the {\em sending-end} complex power from buses $j$ to $k$.
For each bus $j\in N^+$ let $V_j$ be the complex voltage at bus $j$.
Assume without loss of generality that $V_0 = 1\angle 0^\circ$ pu.
Let $s_j$ be the net complex power injection at bus $j$.

The \emph{branch flow model} (BFM)
in \cite{Farivar-2013-BFM-TPS} is defined by the following set of power flow equations:
\begin{subequations}
\bq
\!\!\!\!
% s_j  & \!\! = \!\! &\!\!\!\!  \sum_{k: j\rightarrow k} \!\!\! S_{jk} - \!\!\!
\sum_{k: j\rightarrow k} \!\!\! S_{jk}  & \!\! = \!\! &\!\!\!\!   
	\sum_{i: i\rightarrow j} \!\! \left( S_{ij} - z_{ij} |I_{ij}|^2 \right) + s_j, 	\,  j \in N^+
\label{eq:bfm.3}
\\
\!\!\!\!
I_{jk}  & \!\! = \!\! &  y_{jk} (V_j - V_k),  \quad j \rightarrow k \in \tilde{E}
\label{eq:bfm.1}
\\
\!\!\!\!
S_{jk}  & \!\! = \!\! & V_j \, I_{jk}^H, \quad j \rightarrow k \in \tilde{E}
\label{eq:bfm.2}
\eq
\label{eq:bfm}
\end{subequations}
where \eqref{eq:bfm.1} is the Ohm's law, \eqref{eq:bfm.2} defines branch power, 
% in terms of the associated voltage and current, 
and \eqref{eq:bfm.3} imposes power balance at each bus.  The quantity
$z_{ij} |I_{ij}|^2$ represents line loss so that $S_{ij} - z_{ij} |I_{ij}|^2$ is the
receiving-end complex power at bus $j$ from bus $i$.

Let the set of solutions $\tilde{x} := (S, I, V)$ of BFM for each $s$ be:
\bqn
\mathbb{\tilde{X}}(s) & := & \{ \tilde{x} \in \mathbb{C}^{2m+n+1} \ \vert \ \tilde{x} \text{ satisfies } 
				\eqref{eq:bfm} \}
\eqn
For convenience we include $V_0$ in the vector variable $V := (V_j, j\in N^+)$ with the
understanding that $V_0 := 1 \angle 0^\circ$ is fixed.

\subsection{Equivalence}

Even though the bus injection model \eqref{eq:bim.1} and the branch flow model \eqref{eq:bfm}
are defined by different
sets of equations in terms of their own variables, both are models of the Kirchhoff's laws
and therefore must be related.  We now clarify the precise sense in which these two
mathematical models are equivalent.  
We say two sets $A$ and $B$ are \emph{equivalent}, denoted  by $A\equiv B$, if
there is a bijection between them \cite{Bose-2012-BFMe-Allerton}.
\begin{theorem}
\label{thm:bim=bfm}
$\mathbb V(s) \equiv \mathbb{\tilde X} (s)$ for any power injections $s$.
\end{theorem}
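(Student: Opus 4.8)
The plan is to exhibit an explicit bijection by projecting a BFM solution onto its voltage component and, conversely, reconstructing the branch quantities from the voltages. Define $g : \tilde{\mathbb X}(s) \to \mathbb C^{n+1}$ by $g(S, I, V) := V$, and define $h : \mathbb V(s) \to \mathbb C^{2m+n+1}$ by $h(V) := (S, I, V)$ where $I_{jk} := y_{jk}(V_j - V_k)$ and $S_{jk} := V_j I_{jk}^H$ for each $j\rightarrow k \in \tilde E$. By construction $h(V)$ satisfies \eqref{eq:bfm.1} and \eqref{eq:bfm.2}, and conversely every element of $\tilde{\mathbb X}(s)$ must already satisfy these same two identities; hence once $g$ and $h$ are shown to land in the correct codomains, the compositions $g\circ h$ and $h\circ g$ are forced to be identities. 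So the entire theorem reduces to a single equivalence: under the substitution \eqref{eq:bfm.1}--\eqref{eq:bfm.2}, the power-balance equation \eqref{eq:bfm.3} holds if and only if the bus injection equation \eqref{eq:bim.1} holds.

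The key computation is to simplify the receiving-end power. Using $z_{ij} = 1/y_{ij}$ together with \eqref{eq:bfm.1} gives a loss term $z_{ij}|I_{ij}|^2 = y_{ij}^H (V_i - V_j)(V_i^H - V_j^H)$, while $S_{ij} = V_i\, y_{ij}^H(V_i^H - V_j^H)$ from \eqref{eq:bfm.2}. Subtracting, the common factor $(V_i^H - V_j^H)$ collects and I expect
\beq
S_{ij} - z_{ij}|I_{ij}|^2 \ = \ V_j\, y_{ij}^H (V_i^H - V_j^H),
\eeq
i.e. the receiving-end power at $j$ attaches to $V_j$ rather than $V_i$, as physically it should. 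Substituting this, together with $S_{jk} = V_j\, y_{jk}^H(V_j^H - V_k^H)$, into \eqref{eq:bfm.3} and solving for $s_j$ produces a sum over outgoing edges $j\rightarrow k$ plus a sum over incoming edges $i\rightarrow j$, each term of the form $V_j\, y^H(V_j^H - V^H)$ after flipping the sign on the incoming contribution and using $y_{ij} = y_{ji}$.

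The remaining step is bookkeeping on the orientation. Every undirected neighbor $k$ of $j$ corresponds to exactly one directed edge, either $j\rightarrow k$ or $k\rightarrow j$, so the outgoing-edge sum and the incoming-edge sum in the expression for $s_j$ merge into the single undirected neighbor sum $\sum_{k:\, j\sim k} V_j\, y_{jk}^H(V_j^H - V_k^H)$, which is precisely \eqref{eq:bim.1}. Since every manipulation above is reversible, this establishes the equivalence of \eqref{eq:bfm.3} with \eqref{eq:bim.1} under the substitution, and hence that $g$ and $h$ are mutually inverse bijections. I do not anticipate a genuine obstacle here; the only points requiring care are the sign handling when an incoming edge's contribution is flipped (the loss term is exactly what reattaches the receiving-end power to $V_j$) and checking that the fixed value $V_0 = 1\angle 0^\circ$ is preserved identically under both $g$ and $h$, so that the bijection respects the slack-bus convention on both sides.
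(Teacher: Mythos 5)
Your proposal is correct and follows essentially the same route as the paper's own proof: project a BFM solution onto its voltage component in one direction, reconstruct $(S, I)$ from $V$ via \eqref{eq:bfm.1}--\eqref{eq:bfm.2} in the other, and reduce the whole theorem to the identity $S_{ij} - z_{ij}|I_{ij}|^2 = V_j\, y_{ij}^H (V_i^H - V_j^H)$, which merges the outgoing and incoming directed sums in \eqref{eq:bfm.3} into the undirected neighbor sum of \eqref{eq:bim.1}. The paper's map $g$ is your $h$ (and its $g^{-1}$ your $g$), and its key displayed computation is exactly your receiving-end-power simplification, so no further comparison is needed.
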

\vspace{0.1in}

\begin{remark}
\label{remark:BIMeqBFM}
\emph{Two models.} 
Given the bijection between the solution sets $\mathbb V(s)$ and $\mathbb{\tilde X}(s)$
any result in one model is in principle derivable in the other.   
Some results however are much easier to state or derive in one model than the other.
For instance BIM, which is widely used in transmission network problems, allows a
much cleaner formulation of the semidefinite program
(SDP) relaxation. 
BFM for radial networks has a convenient recursive structure that allows a more efficient 
computation of power flows and leads to a useful linear approximation of BFM; see Section \ref{sec:BFMt}.
The sufficient condition for exact relaxation in \cite{Gan-2014-BFMt-TAC} provides intricate insights
on power flows that are hard to formulate or prove in BIM.
Finally, since BFM directly models branch flows $S_{jk}$ and currents $I_{jk}$, it is  easier
to use for some applications.  We will therefore freely use either model depending on
which is more convenient for the problem at hand.  
\end{remark}

\section{Optimal power flow}
\label{sec:opf}

\subsection{Bus injection model}

As mentioned in Remark \ref{remark:1} an optimal power flow problem optimizes both variables $V$
and $s$ over the solution set of the BIM \eqref{eq:bim.1}.  In addition
all voltage magnitudes must satisfy:
 \bq
\underline{v}_j \ \leq &\!\!  |V_j|^2 \! \!&  \leq\  \overline{v}_j, \ \ 
  		j \in N^+
  \label{eq:opfv}
\eq
\noindent
where $\underline{v}_j$ and $\overline{v}_j$ are given lower and upper bounds on voltage
magnitudes.   Throughout this paper we assume $\underline{v}_j>0$ to avoid triviality.
The power injections  are also constrained:
\bq
 \underline{s}_j  \leq s_j \leq \overline{s}_j, \ \ \  j \in N^+
 \label{eq:opfs}
\eq
where $\underline{s}_j$ and $\overline{s}_j$ are given bounds on the injections
at buses $j$.  
\begin{remark}
\emph{OPF constraints.}
If there is no bound on the load or on the generation at bus $j$
then $\underline{s}_j = -\infty - \ii \infty$ or $\overline{s}_j = \infty + \ii \infty$ respectively.
On the other hand \eqref{eq:opfs} also allows the case where $s_j$ is fixed (e.g. a
constant-power load), by setting $\underline s_j = \overline{s}_j$ to the specified value.
For the slack bus 0, unless otherwise specified, we always assume 
$\underline v_0 = \overline v_0 = 1$ and 
$\underline{s}_0 = -\infty - \ii \infty$, $\overline{s}_0 = \infty + \ii \infty$.
Therefore we sometimes replace $j\in N^+$ in \eqref{eq:opfv} and \eqref{eq:opfs} by $j\in N$.
\end{remark}

We can eliminate the variables $s_j$ from the OPF formulation by combining
\eqref{eq:bim.1} and \eqref{eq:opfs} into
\bq
\underline{s}_j  \ \ \leq  \sum_{k: (j, k) \in E} y_{jk}^H\, V_j (V_j^H - V_k^H)  \ \ 
\leq \ \ \overline{s}_j, 
\  j \in N^+
\label{eq:bimopf.1}
\eq
Then OPF in the bus injection model can be defined just in terms of the complex voltage
vector $V$.    Define
\begin{align}
\mathbb{V} := \{ V \in \mathbb{C}^{n+1} \ \vert \ V \text{ satisfies } \eqref{eq:opfv}, \eqref{eq:bimopf.1} \}
\label{eq:defV}
\end{align}
$\mathbb V$ is the feasible set of  optimal power flow problems in BIM.

Let the cost function be $C(V)$.  Typical costs include the cost of
generating real power at each generator bus or line loss over the network.
All these costs can be expressed as  functions of $V$.   Then the problem of interest is:
\\
\noindent
\textbf{OPF:}
\bq
\label{eq:OPFbim}
\underset{V}{\text{min}}   \ C(V) \ & \text{subject to} &   V \in \mathbb V
\eq
Since \eqref{eq:bimopf.1} is quadratic, $\mathbb{V}$ is generally a nonconvex set.
OPF is thus a nonconvex problem and NP-hard to solve in general.

\subsection{Branch flow model}

Denote the variables in the branch flow model \eqref{eq:bfm} by 
$\tilde{x} := (S, I, V, s) \in \mathbb{C}^{2(m+n+1)}$.  
We can also eliminate the variables $s_j$ as for the bus injection model 
by combining \eqref{eq:bfm.3} and \eqref{eq:opfs} 
but it will prove convenient to retain $s:= (s_j, j\in N^+)$ as part of the variables.
Define the feasible set in the branch flow model:
\begin{align}
\mathbb{\tilde X} := \{ \tilde{x} \in \mathbb{C}^{2(m+n+1)} \ \vert \ \tilde{x} \text{ satisfies } 
				\eqref{eq:bfm},  \eqref{eq:opfv}, \eqref{eq:opfs} \}
\label{eq:deftX}
\end{align}

Let the cost function in the branch flow model be $C(\tilde x)$.
Then the optimal power flow problem in the branch flow model is:
\\
\noindent
\textbf{OPF:}
\bq
\label{eq:OPFbfm}
\underset{\tilde x } {\text{min}}   \ \  C(\tilde x) & \text{ subject to } &  \tilde x \in \mathbb{\tilde X}
\eq
Since \eqref{eq:bfm} is quadratic, $\mathbb{X}$ is generally a nonconvex set.
OPF is thus a nonconvex problem and NP-hard to solve in general. 

\vspace{0.1in}
\begin{remark}
\emph{OPF equivalence.}
By Theorem \ref{thm:bim=bfm} there is a bijection between $\mathbb V$ and
$\mathbb{\tilde X}$.
Throughout this paper we assume that the cost functions in BIM and BFM
are equivalent under this bijection and we abuse notation to denote them by the
same symbol $C(\cdot)$.
Then OPF \eqref{eq:OPFbim} in BIM and \eqref{eq:OPFbfm} in BFM
are equivalent.   
\end{remark}

\begin{remark}
\emph{OPF variants.}
OPF as defined in \eqref{eq:OPFbim} and \eqref{eq:OPFbfm}  is a simplified version 
that ignores other important constraints such as line limits,  security constraints,
stability constraints, and chance constraints; see extensive surveys in \cite{Powerbk, Huneault91,Momoh99a,Momoh99b,Pandya08, Frank2012a, Frank2012b,
OPF-FERC-1, OPF-FERC-2, OPF-FERC-3, OPF-FERC-4, OPF-FERC-5, GanThomas2000, Capitanescua2011}
and a recent discussion in  \cite{Stott2012} on real-life OPF problems.
Some of these can be incorporated without any 
change to the results in this paper (e.g. see \cite{Farivar-2013-BFM-TPS, Bose-2012-QCQPt} for
models that include shunt elements and line limits).
Indeed a shunt element $y_j$ at bus $j$ can be easily included in BIM
by modifying \eqref{eq:bim.1}
into:
\bqn
s_j  & = & \sum_{k: j\sim k} y_{jk}^H\ V_j (V_j^H - V_k^H) + y_j^H |V_j|^2
\eqn
or included in BFM by modifying \eqref{eq:bfm.3} into:
\bqn
\!\!\!\!
% s_j  & \!\! = \!\! &\!\!\!\!  \sum_{k: j\rightarrow k} \!\!\! S_{jk} - \!\!\!
\sum_{k: j\rightarrow k} \!\!\! S_{jk}  + y_j^H |V_j|^2 & \!\! = \!\! &\!\!\!\!   
	\sum_{i: i\rightarrow j} \!\! \left( S_{ij} - z_{ij} |I_{ij}|^2 \right) + s_j
\eqn
\end{remark}

\begin{comment}
\slow{Stability constrained OPF - check Hsiao-Dong Chiang's website.
check "Practical OPF" 2009.}
\end{comment}

\subsection{OPF as QCQP}
\label{subsec:opfqcqp}

Before we describe convex relaxations of OPF  we first show that,
when $C(V) := V^H C V$ is quadratic in $V$ for some Hermitian matrix $C$, OPF 
is indeed a quadratically constrained quadratic program (QCQP)
by converting it into the standard form.  
We will use the derivation in \cite{Bose-2012-QCQPt} for OPF \eqref{eq:OPFbim}
in BIM.   OPF \eqref{eq:OPFbfm} in BFM can similarly be converted into
a standard form QCQP.

% To write \eqref{eq:OPFbim} in the standard form % \eqref{eq:defqcqp.1}, 
Define the $(n+1) \times (n+1)$ admittance matrix $Y$ by
\bqn
Y_{ij} & = & \begin{cases}
\displaystyle
\sum_{k: k \sim i} y_{ik}, & \text{ if } i=j \\
- y_{ij}, & \text{ if } i \neq j \text { and $i \sim j$}  \\
0 & \text{ otherwise}  \end{cases}
\eqn
$Y$ is symmetric but not necessarily Hermitian.   
Let $I_j$ be the net injection current from bus $j$ to the rest of the network.
Then the current vector $I$ and the voltage vector $V$ are related by the
Ohm's law $I = YV$. BIM \eqref{eq:bim.1} is equivalent to:
\bqn
s_j & = & V_j I_j^H \ = \ (e_j^H V) (I^H e_j)
\eqn
where $e_j$ is the $(n+1)$-dimensional vector with 1 in the $j$th entry
and 0 elsewhere.  Hence, since $I = YV$, we have
\bqn
s_j \ = \ \text{tr } \left( e_j^H VV^H Y^H e_j \right) 
	& = & \text{tr } \left( Y^H e_j e_j^H \right) VV^H
	\ = \ V^H Y_j^H V
\eqn
where $Y_j := e_j e_j^H Y$ is an $(n+1) \times (n+1)$ matrix with its $j$th row
equal to the $j$th row of the admittance matrix $Y$ and all other rows equal
to the zero vector.   $Y_j$ is  in general not Hermitian so that 
$V^H Y_j^H V$ is in general a complex number.
Its real and imaginary parts can be expressed in terms of the Hermitian and
skew Hermitian components of $Y_j^H$ defined as:
\bqn
\!\!\!\!\!
\Phi_j := \frac{1}{2} \left( Y_j^H + Y_j \right) &\!\!\!\!\!  \text{and} \!\!\!\!\! & 
\Psi_j := \frac{1}{2\ii} \left( Y_j^H - Y_j \right)
\label{eq:defPhiPsi}
\eqn
Then 
\bqn
\text{Re}\,\, s_j = V^H \Phi_j V  & \text{and} & 
\text{Im}\, \, s_j = V^H \Psi_j V
\eqn
Let their upper and lower bounds be denoted by
\bqn
\underline{p}_j := \text{Re}\,\, \underline{s}_j & \text{and} & 
\overline{p}_j := \text{Re}\,\, \overline{s}_j
\\
\underline{q}_j := \text{Re}\,\, \underline{s}_j & \text{and} & 
\overline{q}_j := \text{Re}\,\, \overline{s}_j
\eqn
Let $J_j := e_j e_j^H$ denote the Hermitian matrix with a single 1 in
the $(j,j)$th entry and 0 everywhere else.
Then OPF \eqref{eq:OPFbim} can be written as a standard form QCQP:
\begin{subequations}
\bq
\!\!\!\!\!\!    \!\!\!\!\!\!\!
\min_{V\in \mathbb C^{n+1}} & \!\!\!\! & \!\!\!\!  V^H C V
\label{eq:bimOPF.b1}
\\
\!\!\!\!\!\!       \!\!\!\!\!\!\!
\text{subject to}  & \!\!\!\! & \!\!\!\!
 	V^H \Phi_j V \leq \overline{p}_j, \  	V^H (-\Phi_j) V \leq -\underline{p}_j
\label{eq:bimOPF.b2}
\\
\!\!\!\!\!\!       \!\!\!\!\!\!\!   & \!\!\!\!  & \!\!\!\!
 		V^H \Psi_j V \leq \overline{q}_j, \ V^H (-\Psi_j) V \leq -\underline{q}_j
\label{eq:bimOPF.b3}
\\
\!\!\!\!\!\!       \!\!\!\!\!\!\!   & \!\!\!\!  & \!\!\!\!
		V^H J_j V \leq \overline{v}_j, \ V^H (- J_j) V \leq -\underline{v}_j
\label{eq:bimOPF.b5}
\eq
\label{eq:OPFbim.2}
\end{subequations}
where $j\in N^+$ in \eqref{eq:OPFbim.2}.

\section{Feasible sets and relaxations: BIM}
\label{sec:rBIM}

In this and the next section we derive semidefinite relaxations of OPF and clarify their relations.
The cost function $C$ of OPF is usually assumed to be convex in its variables.  
The difficulty of OPF formulated here
 thus arises  from the nonconvex feasible sets $\mathbb V$ for BIM
and $\mathbb{\tilde X}$ for BFM.
The basic approach to deriving convex relaxations of OPF is to design convex
supersets of (equivalent sets of)
$\mathbb V$ or $\mathbb{\tilde X}$ and minimize the same cost
function over these  supersets.  Different choices of convex supersets
lead to different relaxations, but they all provide a lower bound to OPF.
If every optimal solution of a convex relaxation happens to lie in $\mathbb V$
or $\mathbb{\tilde X}$ then it is also feasible and hence optimal for the original
OPF.   In this case we say the realxation is exact.

In this section we present three characterizations of the feasible set $\mathbb V$
in BIM.    These characterizations naturally
suggest convex supersets and semidefinite relaxations of OPF, and we prove equivalence relations
among them.   In the next section we treat BFM.
In Part II of the paper we discuss sufficient conditions that guaranteed exact relaxations.

\subsection{Preliminaries}

Since OPF is a nonconvex QCQP  
there is a standard semidefinite relaxation through the equivalence
relation: for any Hermitian matrix $M$, 
$V^H M V = $ tr $M VV^H = $ tr $MW$ for a psd rank-1 matrix $W$.
Applying this transformation to the QCQP formulation \eqref{eq:OPFbim.2} leads to an
equivalent problem of the form:
\bqn
\min_{W\in \mathbb S^{n+1}}  & &  \text{ tr } C W
\\
\text{subject to} & & \text{ tr } C_l W \ \leq \ b_l,
			\ W\succeq 0, \ \text{ rank }W = 1
\eqn
for appropriate Hermitian matrices $C_l$ and real numbers $b_l$.
This problem is equivalent to \eqref{eq:OPFbim.2} because given a psd rank-1 solution $W$,
a unique solution $V$ of \eqref{eq:OPFbim.2} can be recovered through 
rank-1 factorization $W = VV^H$.
Unlike \eqref{eq:OPFbim.2} which is quadratic in $V$ this problem is convex in 
$W$ except the nonconvex rank-1 constraint.  Removing the rank-1 constraint yields
the standard SDP relaxation.  

We now generalize this intuition to 
characterize the feasible set $\mathbb V$ in \eqref{eq:defV} in terms of partial matrices.  
These characterizations lead naturally to  SDP, chordal, and second-order cone
program (SOCP) relaxations of OPF
in BIM, as shown in \cite{Bose-2012-BFMe-Allerton, Bose-2014-BFMe-TAC}.

% \vspace{0.1in}  \noindent
% \emph{Preliminaries.}
We start with some basic definitions on partial matrices and their completions;
see e.g. \cite{Grone1984, Fukuda99exploitingsparsity, Nakata2003} for more details.
Fix any connected undirected graph $F$ with $n$ vertices and $m$ edges connecting distinct 
vertices.\footnote{In this subsection we abuse notation and use $n, m$ to denote general integers
unrelated to the number of buses or lines in a power network.}
A \emph{partial matrix} $W_F$ is a set of $2m+n$ complex numbers \emph{defined on $F$}:
\bqn
W_F  & := &   \left\{ \ [W_F]_{jj}, [W_F]_{jk}, [W_F]_{kj}   
	\ | \ \text{nodes $j$ and edges $(j,k)$ of $F$} \ \right\}
\eqn
$W_F$ can be interpreted as a matrix with entries partially specified by these complex
numbers.
If $F$ is a complete graph (in which there is an edge between every pair of vertices)
 then $W_F$ is a fully specified $n\times n$ matrix.
A \emph{completion} $W$ of $W_F$ is  any fully specified $n\times n$ matrix that agrees with 
$W_F$ on  graph $F$, i.e., 
\bqn
[W]_{jj} \ = \ [W_F]_{jj}, \ [W]_{jk} \ = \ [W_F]_{jk}\ \ \text{ for } j, (j,k) \in F
\eqn
Given an $n\times n$ matrix $W$ we use $W_F$ to denote the \emph{submatrix of $W$ on $F$,} i.e., 
the partial matrix consisting of the entries of $W$ defined on graph $F$.
If $q$ is a clique (a fully connected subgraph) of $F$ then let $W_F(q)$ denote the
fully-specified principal submatrix of $W_F$ defined on $q$.
We extend the definitions of Hermitian, psd, and rank-1 for matrices to partial matrices, as follows.
A partial matrix $W_F$ is \emph{Hermitian}, denoted by $W_F = W_F^H$,
 if $[W_F]_{jk} = [W_F]_{kj}^H$ for all $(j,k)\in F$;
it is \emph{psd}, denoted by $W_F \succeq 0$, if $W_F$ is Hermitian and 
the principal submatrices $W_F(q)$ are psd for
all cliques $q$ of $F$; it is \emph{rank-1}, denoted by rank $W_F = 1$, if the principal submatrices 
$W_F(q)$ are rank-1 for all cliques $q$ of $F$.
We say $W_F$ is \emph{$2\times 2$ psd (rank-1)} if, for all edges $(j,k)\in F$, 
the $2\times 2$ 
principal submatrices
\bqn
W_F(j,k) & := & \begin{bmatrix}
		[W_F]_{jj}  & [W_F]_{jk}  \\   [W_F]_{kj}  &  [W_F]_{kk}
		\end{bmatrix}
\eqn
are psd (rank-1), denoted  by $W_F(j,k) \succeq 0$ (rank $W_F(j,k) = 1)$.
$F$ is a \emph{chordal graph} if either $F$ has no cycle or all its minimal cycles (ones without
chords) are of length three.
A \emph{chordal extension} $c(F)$ of  $F$ is a chordal graph that contains $F$, i.e., $c(F)$ 
has the same vertex set as $F$ but an edge set that is a superset of $F$'s edge set.
In that case we call the partial matrix $W_{c(F)}$ a \emph{chordal extension} of the
partial matrix $W_F$.
Every graph $F$ has a chordal extension, generally nonunique.  In particular a complete
supergraph of $F$ is a trivial chordal extension of $F$.

For our purposes chordal graphs are important because of the result 
\cite[Theorem 7]{Grone1984} that every psd partial matrix has a psd completion
% (in fact a positive definite completion) <--- THIS IS INCORRECT AS REVIEWER POINTED OUT
if and only if the underlying graph is chordal.
When a positive definite completion  exists,
there is a \emph{unique} positive definite completion, in the class of all
positive definite completions, whose determinant is maximal.   
Theorem \ref{thm:rank1} below extends this to rank-1 partial matrices.

% \vspace{0.1in}  \noindent
% \emph{Feasible sets.}
\subsection{Feasible sets}
We can now characterize the feasible set $\mathbb V$ of OPF defined in \eqref{eq:defV}. 
Recall the undirected connected graph $G = (N^+, E)$ that models a power network.
Given a voltage vector $V\in \mathbb V$ define a partial matrix $W_G := W_G(V)$: 
for $j\in N^+$ and $(j,k)\in E$,
\begin{subequations}
\bq
[W_G]_{jj} & := & |V_j|^2
\\
\left[W_G \right]_{jk} & := & V_j V_k^H   \ =: \ [W_G]_{kj}^H
\eq
\label{eq:defpm}
\end{subequations}
Then the constraints
\eqref{eq:bimopf.1} and \eqref{eq:opfv}  imply that the partial matrix $W_G$ satisfies
 \footnote{The constraint
\eqref{eq:opfW.1} can also be written compactly in terms of the admittance matrix 
$Y$ as in \cite{Zhang2013}:
\bqn
\underline{s} \ \leq \  \text{diag } \left( WY^H \right) \ \leq \ \overline{s}
\eqn
}
\begin{subequations}
\bq
\underline{s}_j \  \leq\!\!  \sum_{k: (j,k)\in E}\!\! y_{jk}^H\, \left( [W_G]_{jj} - [W_G]_{jk} \right) 
 \leq  \ \overline{s}_j,  \ \ j \in N^+
\label{eq:opfW.1}
\\
\underline{v}_j \ \leq \ [W_G]_{jj} \ \leq \ \overline{v}_j,  \qquad\qquad j \in N^+
\label{eq:opfW.2}
\eq
\label{eq:opfW}
\end{subequations}
Following Section \ref{subsec:opfqcqp} these  constraints can also be written
in a (partial) matrix form as:
\bqn
\underline{p}_j & \leq\  \text{tr}\ \Phi_j W_G \ \leq & \overline{p}_j
\\
\underline{q}_j & \leq\  \text{tr}\ \Psi_j W_G \ \leq & \overline{q}_j
\\
\underline{v}_j & \leq\  \text{tr}\ J_j W_G \ \leq & \overline{v}_j
\eqn

The converse is not always true: given a partial matrix $W_G$ that satisfies \eqref{eq:opfW}
it is not always possible to recover a voltage vector $V$ in $\mathbb V$.
Indeed this is possible if and only if $W_G$ has a completion $W$ that is psd rank-1,
because in that case $W$ satisfies \eqref{eq:opfW} since $y_{jk}=0$ if $(j,k)\not\in E$ and 
it can be uniquely factored as $W= VV^H$ with $V\in \mathbb V$.  
We hence seek conditions additional to  \eqref{eq:opfW} on the partial matrix $W_G$ that guarantee  that
it has a psd rank-1 completion $W$ from which $V\in \mathbb V$ can be recovered.
Our first key result provides such a characterization.

We say that a partial matrix $W_G$ satisfies the {\em cycle condition} if for every 
cycle $c$ in $G$
\bq
\sum_{(j,k)\in c}\ \angle [W_G]_{jk} & = & 0   \ \ \mod 2\pi
\label{eq:cyclecond.2}
\eq
When $\angle [W_G]_{jk}$ represent voltage phase differences across each line then the
cycle condition imposes that they sum to zero (mod $2\pi$) around any cycle.
The next theorem, proved in \cite[Theorem 3]{Bose-2012-BFMe-Allerton} and \cite{Bose-2014-BFMe-TAC},
implies that $W_G$ has a psd rank-1 completion $W$ if and only if
$W_G$ is $2\times 2$ psd rank-1 on $G$ and satisfies the cycle condition \eqref{eq:cyclecond.2}, 
if and only if it has a chordal extension $W_{c(G)}$ that is psd rank-1. \footnote{The theorem also holds 
with psd replaced by negative semidefinite.}

%%%%%%%%%%%%%%%%
%
% The notation $W, W_{c(G)}, W_G$ in the theorem should be interpreted as follows.
% Given an $(n+1)\times (n+1)$ Hermitian matrix $W$, $W_{c(G)}$ and $W_G$ 
% denote the restrictions of $W$ on $c(G)$ and
%  $G$ respectively, and the theorem characterizes exactly 
% when a psd matrix $W$ is of rank 1 in terms of its submatrices $W_G$ and $W_{c(G)}$.
% Conversely, given a partial matrix $W_G$, $W_{c(G)}$ and $W$ denote a chordal extension 
%  and a completion of  $W_G$ respectively, and the theorem characterizes when $W_G$ has 
% a psd rank-1 chordal extension/completion.
%
%%%%%%%%%%%%%%%%%%
Consider the following conditions on $(n+1)\times (n+1)$ matrices $W$
and partial matrices $W_{c(G)}$ and $W_G$:
\bq
W\succeq 0,  \!\! & \!\!\!\!\!\!\! & \!\!\!\!  \text{rank }W = 1
\label{eq:Wrank1}
\\
W_{c(G)}\succeq 0,  \!\!  & \!\!\!\!\!\!\!& \!\!\!\!  \text{rank }W_{c(G)} = 1
\label{eq:WcGrank1}
\\
W_G(j,k) \succeq 0,  \!\!  & & \!\!\!\!  \text{rank }W_G(j,k) = 1, \ \ \ \ (j,k)\in E,
\label{eq:2x2rank1}
% \\
% W_G \text{ satisfies}  \!\!\!\!\!\!\!  & \!\!\!\!\!\!\! & \!\!\!\!\!\!\!  \text{cycle condition %\eqref{eq:cyclecond.2}}
% \label{eq:2x2rank1b}
\eq
\begin{theorem}
\label{thm:rank1}
Fix a graph $G$ on $n+1$ nodes and any chordal extension $c(G)$ of $G$.
Assuming $W_{jj}>0$, $\left[ W_{c(G)} \right]_{jj}>0$ and 
$\left[ W_G \right]_{jj}>0$, $j\in N^+$, we have:
\bee
\item[(1)] Given an $(n+1)\times (n+1)$ matrix $W$ that satisfies \eqref{eq:Wrank1},
	its submatrix $W_{c(G)}$ satisfies  \eqref{eq:WcGrank1}.
\item[(2)] Given a partial matrix $W_{c(G)}$ that satisfies  \eqref{eq:WcGrank1},
	its submatrix $W_G$ satisfies \eqref{eq:2x2rank1} and the cycle
	condition \eqref{eq:cyclecond.2}.
\item[(3)] Given a partial matrix $W_G$ that satisfies 
	\eqref{eq:2x2rank1} and the cycle condition \eqref{eq:cyclecond.2}, 
	there is a completion $W$ of $W_G$ that satisfies \eqref{eq:Wrank1}.
\eee
%%%%%%
%
% With the notation explained above, the following are equivalent:
% \bee 
%  \item[(1) ] $W\succeq 0$ and rank $W = 1$.
%  \item[(2) ]  $W_{c(G)}\succeq 0$ and rank $W_{c(G)} = 1$.
%   \item[(3) ]  $W_G(j,k) \succeq 0$ and rank $W_G(j,k) = 1$ for $(j,k)\in E$, and $W_G$ satisfies the cycle
%  	 condition \eqref{eq:cyclecond.2}.
 % \eee
% assuming $W_{jj}>0$, $\left[ W_{c(G)} \right]_{jj}>0$ and $\left[ W_G \right]_{jj}>0$, $j\in N^+$.
 \end{theorem}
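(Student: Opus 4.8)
The plan is to establish the three items as a cyclic chain $(1)\Rightarrow(2)\Rightarrow(3)$, which together certify that the three descriptions \eqref{eq:Wrank1}, \eqref{eq:WcGrank1}, and ``\eqref{eq:2x2rank1} together with the cycle condition \eqref{eq:cyclecond.2}'' are equivalent notions of psd rank-1 completability. Part (1) is essentially immediate. A psd rank-1 matrix $W$ with positive diagonal factors as $W=VV^H$ with every $V_j\neq 0$. For any clique $q$ of $c(G)$ the principal submatrix $W_{c(G)}(q)$ equals $V_q V_q^H$, where $V_q$ is the subvector of $V$ on $q$; this is Hermitian, psd, and of rank exactly one since $V_q\neq 0$. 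As this holds for every clique, $W_{c(G)}$ satisfies \eqref{eq:WcGrank1} by the definition of psd rank-1 for partial matrices.

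For Part (2) I would split the conclusion. The $2\times 2$ part is direct: each edge $(j,k)\in E$ is also an edge, hence a $2$-clique, of $c(G)$, and $W_G(j,k)=W_{c(G)}(j,k)$; since $W_{c(G)}$ is psd rank-1 this $2\times 2$ block is psd rank-1, giving \eqref{eq:2x2rank1}. The cycle condition is the substantive claim. First I would record that every triangle (3-clique) $\{j,k,l\}$ of $c(G)$ satisfies an angle identity: writing $W_{c(G)}(j,k,l)=uu^H$ from the $3\times 3$ rank-1 factorization, $\angle[W]_{jk}+\angle[W]_{kl}+\angle[W]_{lj}=\angle(u_j u_k^H)+\angle(u_k u_l^H)+\angle(u_l u_j^H)=0 \bmod 2\pi$. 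Then, given any (simple) cycle $c$ of $G$, I would view it as a cycle of $c(G)$ (since $G\subseteq c(G)$) and triangulate the corresponding polygon using chords of the chordal graph $c(G)$. Summing the triangle identities over the triangulation, each internal chord is traversed once in each direction and contributes $\angle[W]_{jk}+\angle[W]_{kj}=0$, so all chord terms cancel and only the edges of $c$ survive, yielding \eqref{eq:cyclecond.2}.

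Part (3) is the constructive converse. Given $W_G$ satisfying \eqref{eq:2x2rank1} and \eqref{eq:cyclecond.2}, the $2\times 2$ rank-1 condition forces $|[W_G]_{jk}|^2=[W_G]_{jj}[W_G]_{kk}$, so setting $|V_j|:=\sqrt{[W_G]_{jj}}>0$ gives $|[W_G]_{jk}|=|V_j||V_k|$, and it remains only to choose phases $\theta_j=\angle V_j$ with $\theta_j-\theta_k=\angle[W_G]_{jk}\bmod 2\pi$ on every edge. I would fix a spanning tree of the connected graph $G$, set $\theta_0=0$, propagate $\theta_j$ uniquely along the tree edges, and then verify via the cycle condition on the fundamental cycles that the prescribed difference also holds on every non-tree edge. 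Finally I would set $V_j:=|V_j|e^{\ii\theta_j}$ and $W:=VV^H$; this $W$ is psd rank-1, and it completes $W_G$ because $[W]_{jj}=|V_j|^2=[W_G]_{jj}$ and $[W]_{jk}=V_jV_k^H=|V_j||V_k|e^{\ii(\theta_j-\theta_k)}=[W_G]_{jk}$ on $E$.

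The hard part will be the phase-recovery step in Part (3): making rigorous that the cycle condition is exactly the consistency needed to integrate the prescribed edge angle differences into single-valued node phases modulo $2\pi$, and, symmetrically, the chord-cancellation bookkeeping in Part (2) that reduces an arbitrary cycle of $G$ to triangles of $c(G)$. Both are the discrete analogue of the statement that a closed $1$-form on a triangulated disk is exact, and I would take care throughout that every angle identity is interpreted modulo $2\pi$ so that exponentiation recovers the correct complex entries.
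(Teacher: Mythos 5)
Your proposal is correct and takes essentially the same approach as the paper's proof: the same factorization argument for item (1); for item (2) the same triangle-identity-plus-Hermitian-cancellation mechanism (the paper packages your triangulation as an induction on cycle length, repeatedly splitting a cycle of length at least four along a chord of $c(G)$, which is exactly how one would justify that the triangulation exists); and for item (3) the same construction $|V_j| = \sqrt{[W_G]_{jj}}$ with phases integrated from the edge angles and $W := VV^H$. The only cosmetic difference is that the paper assigns $\angle V_j$ by summing edge angles along an arbitrary path from bus $0$, with well-definedness guaranteed directly by the cycle condition, whereas you fix a spanning tree and then verify the non-tree edges via fundamental cycles -- these are equivalent formulations of the same step.
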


Informally Theorem \ref{thm:rank1} says that \eqref{eq:Wrank1} is equivalent
to \eqref{eq:WcGrank1} is equivalent to \eqref{eq:2x2rank1}$+$\eqref{eq:cyclecond.2}.
It characterizes a property of the full matrix $W$
(rank $W = 1$) in terms of its submatrices $W_{c(G)}$ and $W_G$.
This is important because the submatrices are typically much smaller than $W$ 
for large sparse networks and much easier to compute.
The theorem thus allows us to solve simpler problems in terms of partial matrices
as we now explain.

Define the set of Hermitian matrices:
\begin{equation}
\begin{array}{rl}
 \mathbb W \ := \  \{W \in \mathbb S^{n+1} \ \vert \ & \!\!\!\!\! W \text{ satisfies } 
		\eqref{eq:opfW}, \eqref{eq:Wrank1}  \}
% \nonumber 
% \\
%   		&\!\!\!\!\!  W\succeq 0, \text{ rank } W = 1 \}
\end{array}
\label{eq:defW}
\end{equation}
Fix any chordal extension $c(G)$ of $G$ and define the set of Hermitian {partial}
matrices $W_{c(G)}$:
\begin{equation}
\begin{array}{rl}
 \mathbb W_{c(G)} \ := \ \{W_{c(G)}  \ \vert \ & \!\!\!\!\! W_{c(G)} \text{ satisfies } \eqref{eq:opfW},  \eqref{eq:WcGrank1} 
  \}
\end{array}
\label{eq:defWcG}
\end{equation}
Finally define the set of Hermitian partial matrices $W_G$:
\begin{equation}
\begin{array}{rl}
 \mathbb W_G \ := \ \{W_{G}  \ \vert & \!\!\!\!\! W_G \text{ satisfies } \eqref{eq:opfW}, 
 			\eqref{eq:cyclecond.2},  \eqref{eq:2x2rank1}     \}
\end{array}
\label{eq:defWG}
\end{equation}
Note that the definition of psd for partial matrices implies that $W_{c(G)}$ and $W_G$ are Hermitian.
The assumption $\underline v_j>0, j\in N^+$ implies that all matrices or partial matrices have
strictly positive diagonal entries.

Theorem \ref{thm:rank1} implies that given a partial matrix $W_{c(G)} \in \mathbb W_{c(G)}$
or a partial matrix $W_G\in \mathbb W_G$ there is a psd rank-1 completion $W\in \mathbb W$ from
which a solution $V\in \mathbb V$ of OPF can be recovered.
In fact we know more: given any Hermitian partial matrix $W_G$ (not necessarily in $\mathbb W_G$), 
the set of \emph{all} 
completions of $W_G$ that satisfies the condition in Theorem \ref{thm:rank1}(3) 
consists of  a {single} psd rank-1 matrix
% or a {single} negative semidefinite rank-1 matrix, together with
and infinitely many indefinite non-rank-1 matrices;
 see \cite[Theorems 5 and 8]{Bose-2012-BFMe-Allerton} and discussions therein. 
Hence the psd rank-1 completion $W$ of a $W_G \in \mathbb W_G$ is unique.
% We will construct this $W$ explicitly below.
\begin{corollary}
\label{coro:eq}
Given a partial matrix $W_{c(G)} \in \mathbb W_{c(G)}$ or $W_G \in \mathbb W_G$
there is a unique psd rank-1 completion $W \in \mathbb W$.
\end{corollary}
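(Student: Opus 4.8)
The plan is to derive both existence and uniqueness from the chain of implications in Theorem \ref{thm:rank1}, together with the elementary observation that the OPF constraints \eqref{eq:opfW} couple only the entries of a matrix that live on $G$ (the diagonal and the off-diagonal positions $(j,k)$ with $(j,k)\in E$, since $y_{jk}=0$ off $E$). I would establish the statement first for $W_G\in\mathbb W_G$ and then reduce the case $W_{c(G)}\in\mathbb W_{c(G)}$ to it.

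For existence from $W_G\in\mathbb W_G$: by definition $W_G$ satisfies \eqref{eq:2x2rank1} and the cycle condition \eqref{eq:cyclecond.2}, so Theorem \ref{thm:rank1}(3) yields a completion $W$ obeying \eqref{eq:Wrank1}, i.e. $W\succeq 0$ and $\mathrm{rank}\,W=1$. Since $W$ agrees with $W_G$ on all diagonal entries and on every entry $[W]_{jk}$ with $(j,k)\in E$, and since \eqref{eq:opfW} reads only these $G$-entries, $W$ inherits \eqref{eq:opfW} from $W_G$. Hence $W$ satisfies both \eqref{eq:opfW} and \eqref{eq:Wrank1}, so $W\in\mathbb W$, and the factorization $W=VV^H$ recovers a voltage vector $V\in\mathbb V$.

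To treat $W_{c(G)}\in\mathbb W_{c(G)}$ I would pass to its restriction to $G$. Let $W_G$ denote the partial matrix obtained from $W_{c(G)}$ by keeping only the entries on $G$. By Theorem \ref{thm:rank1}(2) this $W_G$ satisfies \eqref{eq:2x2rank1} and \eqref{eq:cyclecond.2}, and it inherits \eqref{eq:opfW} from $W_{c(G)}$ for the same reason as above. Thus $W_G\in\mathbb W_G$ and the previous paragraph supplies a completion $W\in\mathbb W$. For uniqueness I would appeal to the fact recorded just before the corollary: among the completions of a Hermitian partial matrix that keep every $2\times 2$ submatrix on an edge rank-1, exactly one is psd rank-1. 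Equivalently, one checks directly that a psd rank-1 completion is $W=VV^H$ where the diagonal fixes each $|V_j|$ and the edge entries fix each phase difference $\angle V_j-\angle V_k$ for $(j,k)\in E$; the cycle condition \eqref{eq:cyclecond.2} renders these assignments consistent, and connectedness of $G$ with the normalization $V_0=1\angle 0^\circ$ determines $V$, hence $W$, uniquely. A psd rank-1 completion of $W_{c(G)}$ is a fortiori one of $W_G$, so this uniqueness transfers to the $W_{c(G)}$ case.

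The step I expect to require the most care is, in the $W_{c(G)}$ case, verifying that the completion $W$ built from $W_G$ is genuinely a completion of $W_{c(G)}$, i.e. that it also matches $W_{c(G)}$ on the fill edges of $c(G)\setminus G$ rather than only on $G$. I would close this gap by applying Theorem \ref{thm:rank1}(1) to the psd rank-1 matrix $W$: its submatrix on $c(G)$ is again psd rank-1 and coincides with $W_{c(G)}$ on $G$, so uniqueness of the psd rank-1 completion --- now invoked on the chordal graph $c(G)$ --- forces the two partial matrices to agree everywhere on $c(G)$.
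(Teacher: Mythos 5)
Your argument follows essentially the same route as the paper's proof: reduce the $W_{c(G)}$ case to the $W_G$ case, obtain existence from Theorem \ref{thm:rank1}(3) (together with the observation that \eqref{eq:opfW} reads only the entries on $G$, so the completion inherits it), and obtain uniqueness from the fact that the diagonal of $W_G$ fixes the magnitudes $|V_j|$ while the edge entries fix the phase differences, which on a connected graph determine $V$ up to global phase (normalized at bus $0$) and hence determine $W = VV^H$. The paper phrases this last step as uniqueness of the solution $\theta$ of $B\theta = \beta \bmod 2\pi$ with $B$ the reduced incidence matrix, but it is the same argument in different clothing.

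The one place you go beyond the paper is the fill-edge issue: you insist that the completion $W$ constructed from the restriction $W_G$ must also agree with $W_{c(G)}$ on the edges of $c(G)\setminus G$, a step the paper's proof skips entirely (it silently treats a completion of the submatrix $W_G$ as settling the $W_{c(G)}$ case). Flagging this is a genuine improvement, but your proposed closure is circular as stated: ``uniqueness of the psd rank-1 completion invoked on $c(G)$'' cannot by itself force the submatrix of $W$ on $c(G)$ and the given $W_{c(G)}$ to agree, because uniqueness is a statement about completions of a \emph{single} partial matrix, and to bring it to bear you must first know that $W_{c(G)}$ possesses \emph{some} psd rank-1 completion --- which is exactly the existence claim in question. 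The repair is available with the tools at hand: apply Theorem \ref{thm:rank1}(2)--(3) taking $c(G)$ itself as the base graph, with its own (trivial) chordal extension. Since $W_{c(G)}$ is psd rank-1 on the chordal graph $c(G)$, this yields the cycle condition around every cycle of $c(G)$, hence a psd rank-1 completion $W'$ of $W_{c(G)}$; $W'$ is in particular a completion of $W_G$, so uniqueness of completions of $W_G$ gives $W' = W$, which is precisely the desired agreement on the fill edges. (Equivalently, the induction inside the paper's proof of (2)$\Rightarrow$(3) already establishes the cycle condition on all cycles of $c(G)$, which pins down each fill-edge phase as a path sum in $G$.)
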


Recall that two sets $A$ and $B$ are {equivalent} ($A\equiv B$)
if there is a bijection between them.  
Even though $\mathbb W, \mathbb W_{c(G)}, \mathbb W_G$ are different kinds of spaces
Theorem \ref{thm:rank1} and Corollary \ref{coro:eq}  imply that they are all equivalent 
to the feasible set of OPF.
\begin{theorem}
\label{thm:FS}
$\mathbb V \equiv \mathbb W \equiv \mathbb W_{c(G)} \equiv \mathbb W_G$.
\end{theorem}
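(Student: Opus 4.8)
The plan is to exhibit explicit bijections along the chain $\mathbb V \to \mathbb W \to \mathbb W_{c(G)} \to \mathbb W_G$ and then invoke transitivity of set equivalence (the existence of a bijection between two sets is an equivalence relation, so three bijections chain into the full statement). The entire argument rests on Theorem \ref{thm:rank1} and Corollary \ref{coro:eq}, which I may assume; the remaining work is to package them into maps that also respect the linear constraint \eqref{eq:opfW}. The first observation I would record is that \eqref{eq:opfW} depends only on the entries indexed $[\cdot]_{jj}$ for $j\in N^+$ and $[\cdot]_{jk}$ for $(j,k)\in E$, i.e. only on entries supported on $G$. Since $G\subseteq c(G)$ and any completion agrees with its partial matrix on the prescribed entries, the two sides of \eqref{eq:opfW} take identical values whether evaluated on a full matrix $W$, its chordal submatrix $W_{c(G)}$, or its submatrix $W_G$. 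Hence \eqref{eq:opfW} is automatically carried along by every restriction and completion map below, and I need only track the rank conditions. (The diagonal positivity hypotheses of Theorem \ref{thm:rank1} are met throughout because $\underline v_j>0$.)

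For $\mathbb V \equiv \mathbb W$ I would use the map $\phi\colon V \mapsto VV^H$. In the forward direction $\phi(V)$ is psd and rank-$1$ by construction, and its entries are precisely $[W]_{jj}=|V_j|^2$ and $[W]_{jk}=V_jV_k^H$, so \eqref{eq:bimopf.1} and \eqref{eq:opfv} become exactly \eqref{eq:opfW.1} and \eqref{eq:opfW.2}; thus $\phi(V)\in\mathbb W$. For the inverse, a psd rank-$1$ matrix $W\in\mathbb W$ factors as $W=VV^H$ with $V$ unique up to a global phase; because the slack-bus convention forces $[W]_{00}=1$, the normalization $V_0=1\angle 0^\circ$ pins down a unique $V$, and the same entrywise identities translate the constraints back so that $V\in\mathbb V$. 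This makes $\phi$ a bijection.

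For $\mathbb W \equiv \mathbb W_{c(G)}$ and $\mathbb W_{c(G)} \equiv \mathbb W_G$ I would use the restriction (submatrix) maps $W\mapsto W_{c(G)}$ and $W_{c(G)}\mapsto W_G$. That these land in the correct sets is exactly Theorem \ref{thm:rank1}(1) and (2), respectively, combined with the supported-on-$G$ observation for \eqref{eq:opfW}. Surjectivity and injectivity both follow from uniqueness of the psd rank-$1$ completion: Corollary \ref{coro:eq} guarantees that every $W_{c(G)}\in\mathbb W_{c(G)}$, and likewise every $W_G\in\mathbb W_G$, has a unique completion $W\in\mathbb W$, so each restriction map is onto, and any two elements of $\mathbb W$ with the same restriction, being psd rank-$1$ completions of a common partial matrix, must coincide. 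Composing the three bijections yields $\mathbb V \equiv \mathbb W \equiv \mathbb W_{c(G)} \equiv \mathbb W_G$.

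The only genuine subtlety, and the step I expect to need the most care, is injectivity of the restriction maps: it is not automatic for partial matrices and hinges entirely on the uniqueness clause of Corollary \ref{coro:eq}, without which a restriction could in principle identify distinct psd rank-$1$ matrices. The phase normalization in the $\mathbb V\equiv\mathbb W$ step is the other place to be deliberate, since $VV^H$ is phase-invariant and one must use $[W]_{00}=1$ to recover $V_0$, and hence all of $V$, unambiguously.
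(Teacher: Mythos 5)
Your proof is correct and takes essentially the same route as the paper, which presents Theorem \ref{thm:FS} as a direct consequence of Theorem \ref{thm:rank1} and Corollary \ref{coro:eq}: the bijections are the restriction/unique-completion maps between $\mathbb W$, $\mathbb W_{c(G)}$, $\mathbb W_G$ and the rank-1 factorization $W = VV^H$ (pinned down by the normalization $V_0 = 1\angle 0^\circ$) between $\mathbb W$ and $\mathbb V$. Your write-up merely makes explicit the details the paper leaves implicit — that the constraint \eqref{eq:opfW} involves only entries supported on $G$ and is therefore preserved by restriction and completion, and that injectivity of the restriction maps follows from the uniqueness clause of Corollary \ref{coro:eq}.
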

\vspace{0.05in}

Theorem \ref{thm:FS} suggests three equivalent problems to OPF.
We assume the cost function $C(V)$ in OPF depends on $V$ only through the
partial matrix $W_G$ defined in \eqref{eq:defpm}.   For example if the cost is total real line loss in the 
network then $C(V) = \sum_j \text{Re } s_j = \sum_j \sum_{k:(j,k)\in E} 
\text{Re} \left([W_G]_{jj} - [W_G]_{jk} \right) y_{jk}^H$.  If the
cost is a weighted sum of real generation power then
$C(V) = \sum_j \left(c_j \, \text{Re } s_j + p_j^{d} \right)$ where $p_j^{d}$ are the given real power
demands at buses $j$; again $C(V)$ is a function of the partial matrix $W_G$.
Then Theorem \ref{thm:FS} implies that OPF \eqref{eq:OPFbim} is equivalent to 
\bq
\underset{W}{\text{min}}   \ C({W}_G) \ & \text{subject to} &   W \in \hat{\mathbb W}
\label{eq:bimeqp}
\eq
where $\hat{\mathbb W}$ is any one of the sets $\mathbb W, \mathbb W_{c(G)}, \mathbb W_G$.
Specifically, given an optimal solution $W^\text{opt}$ in $\mathbb W$, it can be uniquely
decomposed into $W^\text{opt} = V^\text{opt}(V^\text{opt})^H$.  Then $V^\text{opt}$ is in $\mathbb V$ 
and an optimal solution of OPF \eqref{eq:OPFbim}.   Alternatively given an optimal solution $W_F^\text{opt}$ in
$\mathbb W_{c(G)}$ or $\mathbb W_G$, Corollary \ref{coro:eq} guarantees that 
$W_F^\text{opt}$ has a unique psd rank-1 completion $W^\text{opt}$ in $\mathbb W$ from which an optimal 
$V^\text{opt}\in \mathbb V$ can be recovered.
In fact given a partial matrix $W_G \in \mathbb W_G$ (or $W_{c(G)} \in \mathbb W_{c(G)}$)
there is a more direct construction of a feasible solution $V\in \mathbb V$ of OPF than through
its completion; see Section \ref{subsec:sr}.

% \vspace{0.1in}   \noindent
% \emph{Relaxations.}
\subsection{Semidefinite relaxations}
Hence solving OPF \eqref{eq:OPFbim} is equivalent to solving \eqref{eq:bimeqp}
over any of $\mathbb W, \mathbb W_{c(G)}, \mathbb W_G$ for an appropriate matrix variable.
The difficulty with solving \eqref{eq:bimeqp} is that the feasible sets 
$\mathbb W$, $\mathbb W_{c(G)}$, and $\mathbb W_G$ are still
nonconvex due to the rank-1 constraints and the cycle condition \eqref{eq:cyclecond.2}.   
Their removal leads to  SDP, chordal, and SOCP relaxations of OPF
respectively.

Relax $\mathbb W$, $\mathbb W_{c(G)}$ and $\mathbb W_G$  to the following convex supersets:
\bqn
 \mathbb W^+ & \!\!\! := \!\!\! &  \{W \in \mathbb S^{n+1}  \, \vert \, W_G \text{ satisfies } 
		\eqref{eq:opfW},   W\succeq 0 \}
% \label{eq:defW+}
\\
 \mathbb W_{c(G)}^+ & \!\!\! := \!\!\! &  \{W_{c(G)} \, \vert \, W_G \text{ satisfies } 
		\eqref{eq:opfW},   W_{c(G)} \succeq 0 \}
% \label{eq:defWcG+}
\\
 \mathbb W_G^+ & \!\!\! := \!\!\! &  \{W_G  \, \vert \, W_G \text{ satisfies } 
		\eqref{eq:opfW},   W_G(j,k) \succeq 0, \ (j,k)\in E \}
% \nonumber
% \\
% & & 		\qquad\qquad\qquad\qquad \qquad \ \ \, (j,k)\in E  \}
% \label{eq:defWG+}
\eqn
% The assumption $\underline v_j>0, j\in N^+$ implies that all matrices or partial matrices have
% strictly positive diagonal entries.
Define the problems:
\\
\noindent
\textbf{OPF-sdp:}
\bq
\label{eq:bimOPF-sdp}
% \!\!\!\!\!\!\!\!\!\!\!\!\! 
\hspace{-0.315in}
\underset{W}{\text{min}}   \ C(W_G) \ & \text{subject to} &   W \in \mathbb W^+
\eq
\noindent
\textbf{OPF-ch:}
\bq
\label{eq:bimOPF-ch}
\underset{W_{c(G)}}{\text{min}}   \ C(W_G) \ & \text{subject to} &   W_{c(G)} \in \mathbb W_{c(G)}^+
\eq
\noindent
\textbf{OPF-socp:}
\bq
\label{eq:bimOPF-socp}
\hspace{-0.285in}
\underset{W_G}{\text{min}}   \ C(W_G) \ & \text{subject to} &   W_G \in \mathbb W_G^+
\eq
The condition $W_G(j,k)\succeq 0$ in the definition of $\mathbb W_G^+$ is 
equivalent to $\left[ W_G \right]_{jk} = \left[ W_G \right]_{kj}^H$ and
(recall the assumption $\underline v_j>0, j\in N^+$)
\bqn
\quad [W_G]{jj} > 0, \ [W_G]_{kk} > 0, \  [W_G]_{jj}[W_G]_{kk} \geq \left| [W_G]_{jk} \right|^2
\eqn
This is a second-order cone and
hence OPF-socp is indeed an SOCP in the rotated form.  % \eqref{eq:defsocp.b}.
% Second OPF-ch is  a convex chordal relaxation in the standard form.  % \eqref{eq:defcr}.

\begin{remark}
\label{remark:bioCoReBIM}
\emph{Literature.}
SOCP relaxation for OPF seems to be first proposed in \cite{Jabr2006} 
for the bus injection model \eqref{eq:bim.1}, 
and in \cite{Farivar2011-VAR-SGC, Farivar-2013-BFM-TPS} for the branch flow model \eqref{eq:bfm}
as explained in the next section.
By defining a new set of variables $v_j := |V_j|^2$, $R_{jk} := |V_j| |V_k| \cos (\theta_j- \theta_k)$,
and $I_{jk} := |V_j| |V_k| \sin (\theta_j- \theta_k)$ where $\theta_j := \angle V_j$, \cite{Jabr2006} 
rewrites the bus injection model \eqref{eq:bim.1} in the complex domain as a set of linear equations 
in these new variables in the real domain and the following quadratic equations:
\bqn
v_j v_k & = & R_{jk}^2 + I_{jk}^2
\eqn
Relaxing these equalities to $v_j v_k \geq R_{jk}^2 + I_{jk}^2$ enlarges the solution set to a
second-order cone that is equivalent to $\mathbb W_G^+$ in this paper.
SDP relaxation is first proposed in \cite{Bai2008} for the bus injection model and analyzed
in \cite{Lavaei2012}.
Chordal relaxation for OPF is first proposed in \cite{Bai2011, Jabr2012} and analyzed in
\cite{MolzahnLesieutre2013, Bose-2014-BFMe-TAC}.
\end{remark}

\subsection{Solution recovery}
\label{subsec:sr}

When the convex relaxations OPF-sdp, OPF-ch, OPF-socp are exact, i.e., if their optimal
solutions $W^\text{sdp}$, $W_{ch}^\text{ch}$, $W_G^\text{socp}$ happen to lie in
$\mathbb W$, $\mathbb W_{c(G)}$, $\mathbb W_G$ respectively, then an optimal solution
$V^\text{opt}$ of the original OPF can be recovered from these solutions.
Indeed the recovery method works not just for an optimal solution, but any feasible solution
that lies in $\mathbb W$, $\mathbb W_{c(G)}$ or $\mathbb W_G$.
Moreover, given a $W\in \mathbb W$ or a $W_{c(G)} \in \mathbb W_{c(G)}$,
the construction of $V$ depends on $W$ or $W_{c(G)}$ only through their submatrix 
$W_G$.   We hence describe the method for recovering the unique $V$ from a 
$W_G$, which may be a partial matrix in  $\mathbb W_G$ or the submatrix of a (partial) matrix
in $\mathbb W$ or $\mathbb W_{c(G)}$.

Let $T$ be an arbitrary spanning tree of ${G}$ rooted at bus 0.   
Let $\mathbb P_{j}$ denote the unique path from node $0$ to node $j$ in $T$.
Recall that $V_0 = 1 \angle 0^\circ$ without loss of generality.   
For $j = 1, \dots, n$, let 
\bqn
|V_j|  & := &  \sqrt{\left[ W_G \right]_{jj}}  
\\
\angle V_j & := &  - \sum_{(i,k)\in \mathbb P_{j}} \angle \left[ W_G \right]_{ik}
\eqn
Then it can be checked that $V$ is in \eqref{eq:defV} and feasible for OPF.

% \vspace{0.1in}  \noindent
% \emph{Tightness of relaxations.}
\subsection{Tightness of relaxations}
Since $\mathbb W \subseteq \mathbb W^+$, 
$\mathbb W_{c(G)} \subseteq \mathbb W_{c(G)}^+$, 
$\mathbb W_G \subseteq \mathbb W_G^+$, the relaxations 
OPF-sdp, OPF-ch, OPF-socp all provide lower bounds on OPF 
\eqref{eq:OPFbim} in light of Theorem \ref{thm:FS}.
% How do these relaxations compare in terms of computational efficiency and tightness?
%
OPF-socp is the simplest computationally.  OPF-ch usually requires more computation
than OPF-socp but much less than OPF-sdp for large sparse networks (even though 
OPF-ch can be as complex as OPF-sdp in the worse case \cite{Fukuda99exploitingsparsity, Nakata2003}).
The relative tightness of the relaxations depends on the network topology.
For a general mesh network OPF-sdp is as tight a relaxation as OPF-ch and they are strictly
tighter than OPF-socp.   For a tree (radial) network the hierarchy collapses and all three are equally
tight.    We now make this precise.

Consider their feasible sets $\mathbb W^+$, $\mathbb W_{c(G)}^+$ and $\mathbb W_G^+$.   
We say that a set $A$ is an \emph{effective subset} of a set $B$, denoted by
$A \sqsubseteq B$, if, given a (partial) matrix $a\in A$, there is a (partial)
matrix $b\in B$ that has the same cost $C(a) = C(b)$.   We say $A$ is \emph{similar to} $B$, denoted by $A\simeq B$, if
$A\sqsubseteq B$ and $B \sqsubseteq A$.   Note that $A\equiv B$ implies
$A \simeq B$ but the converse may not be true.
The feasible set of OPF \eqref{eq:OPFbim} is an effective subset of the feasible
sets of the relaxations; moreover these relaxations have similar feasible sets when
the network is radial.    This is a slightly different formulation
of the same results in \cite{Bose-2012-BFMe-Allerton, Bose-2014-BFMe-TAC}.
\begin{theorem}
\label{thm: bimR}
$\mathbb V \sqsubseteq \mathbb W^+ \simeq \mathbb W_{c(G)}^+ \sqsubseteq \mathbb W_G^+$.
If $G$ is a tree then 
	$\mathbb V \sqsubseteq \mathbb W^+ \simeq \mathbb W_{c(G)}^+ \simeq \mathbb W_G^+$.
\end{theorem}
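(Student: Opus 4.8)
The plan is to prove each relation in the chain separately, using throughout the standing fact that the cost $C$ depends on a (partial) matrix only through its restriction $W_G$ to the power-network graph $G$. Consequently \emph{any} map that leaves the $G$-submatrix unchanged automatically preserves the cost, and the proof of every $\sqsubseteq$ reduces to exhibiting such a map and checking the relevant positive-semidefiniteness condition. Since $\sqsubseteq$ is plainly transitive, it suffices to verify the individual links.

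First I would dispatch the three ``restriction'' inclusions, which are elementary. For $\mathbb V \sqsubseteq \mathbb W^+$, given $V\in\mathbb V$ set $W:=VV^H$; this is Hermitian psd with positive diagonal $|V_j|^2$, its $G$-submatrix is exactly the $W_G(V)$ of \eqref{eq:defpm}, which satisfies \eqref{eq:opfW} by feasibility of $V$, and the costs agree by construction. For $\mathbb W^+ \sqsubseteq \mathbb W_{c(G)}^+$, map $W$ to its submatrix $W_{c(G)}$: every principal submatrix of a psd matrix is psd, so $W_{c(G)}\succeq 0$, while $W_G$ is untouched. For $\mathbb W_{c(G)}^+ \sqsubseteq \mathbb W_G^+$, map $W_{c(G)}$ to $W_G$: each edge $(j,k)\in E\subseteq c(G)$ is a $2$-clique of $c(G)$, hence $W_G(j,k)=W_{c(G)}(j,k)\succeq 0$, and again $W_G$ and the cost are unchanged.

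The one nontrivial direction is $\mathbb W_{c(G)}^+ \sqsubseteq \mathbb W^+$, which together with the second restriction above yields $\mathbb W^+\simeq \mathbb W_{c(G)}^+$. Here I would invoke the chordal completion theorem \cite{Grone1984}: since $c(G)$ is chordal and $W_{c(G)}\succeq 0$ is a psd partial matrix, it admits a psd completion $W\succeq 0$. Because completion leaves every specified entry --- in particular the $G$-submatrix $W_G$ --- unchanged, $W_G$ still satisfies \eqref{eq:opfW}, so $W\in\mathbb W^+$ with the same cost. This already establishes the first assertion $\mathbb V\sqsubseteq\mathbb W^+\simeq\mathbb W_{c(G)}^+\sqsubseteq\mathbb W_G^+$.

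For the radial case I only need the reverse of the last link. When $G$ is a tree it has no cycle and is therefore already chordal, so its only cliques are vertices and edges; the defining condition $W_G(j,k)\succeq 0$ for every edge, together with the standing positivity of the diagonal, is then precisely the statement that $W_G$ is a psd partial matrix on the chordal graph $G$. Applying the completion theorem once more yields a psd completion $W\succeq 0$ with unchanged $G$-submatrix, so $W\in\mathbb W^+$ of equal cost, giving $\mathbb W_G^+\sqsubseteq\mathbb W^+$; composing with $\mathbb W^+\sqsubseteq\mathbb W_{c(G)}^+$ by transitivity closes the chain to $\mathbb V\sqsubseteq\mathbb W^+\simeq\mathbb W_{c(G)}^+\simeq\mathbb W_G^+$. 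The crux of the whole argument, and the only place where more than ``principal submatrices inherit positivity'' is used, is the chordal completion theorem: it is what converts the edge-local $2\times 2$ psd data (or clique-local data) into a genuine global psd matrix, and it is exactly the failure of such completion on graphs carrying a chordless cycle of length $\geq 4$ that blocks the reverse inclusion $\mathbb W_G^+\sqsubseteq\mathbb W_{c(G)}^+$ for general mesh networks, so that the hierarchy does not collapse there.
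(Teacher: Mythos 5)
Your proposal is correct and follows essentially the same route as the paper's proof: the easy inclusions via restriction maps (using that the cost depends only on the $G$-submatrix), the key step $\mathbb W_{c(G)}^+ \sqsubseteq \mathbb W^+$ via the chordal completion theorem of \cite{Grone1984}, and the radial case by noting that a tree is itself chordal, completing $W_G$ to a psd matrix $W$, and restricting to $c(G)$. Your explicit check that the maximal cliques of a tree are exactly its edges (so that the $2\times 2$ psd conditions defining $\mathbb W_G^+$ coincide with psd-ness of the partial matrix on the chordal graph $G$) is a detail the paper leaves implicit, and is a welcome addition.
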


Let $C^\text{opt}, C^\text{sdp}, C^\text{ch}, C^\text{socp}$ be the optimal values of OPF \eqref{eq:OPFbim},
OPF-sdp \eqref{eq:bimOPF-sdp}, OPF-ch \eqref{eq:bimOPF-ch}, OPF-socp \eqref{eq:bimOPF-socp}
 respectively.   Theorem \ref{thm:FS} and 
Theorem \ref{thm: bimR}  directly imply
\begin{corollary}
\label{coro:bimR}
$C^\text{opt} \geq C^\text{sdp} = C^\text{ch} \geq C^\text{socp}$.
If $G$ is a tree then $C^\text{opt} \geq C^\text{sdp} = C^\text{ch} = C^\text{socp}$.
\end{corollary}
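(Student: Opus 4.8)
The plan is to read the corollary off the structural relations in Theorem \ref{thm: bimR}, using only the elementary fact that the optimal value of a minimization depends on the feasible set solely through the image of the cost function. The first thing I would record is the dictionary between the two preorders on sets and the induced order on optimal values. If $A \sqsubseteq B$, then by definition every $a \in A$ admits a $b \in B$ with $C(a) = C(b)$, so the cost images obey $\{C(a) : a \in A\} \subseteq \{C(b) : b \in B\}$; taking the infimum of each set of reals gives at once $\inf_{b \in B} C(b) \le \inf_{a \in A} C(a)$. In words, the effective super-set yields the smaller optimal value. Consequently $A \simeq B$, being $A \sqsubseteq B$ together with $B \sqsubseteq A$, forces the two cost images to coincide and hence the two optimal values to be equal.

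Next I would apply this dictionary termwise along the chain $\mathbb V \sqsubseteq \mathbb W^+ \simeq \mathbb W_{c(G)}^+ \sqsubseteq \mathbb W_G^+$ supplied by Theorem \ref{thm: bimR}. This is legitimate because, by the standing assumption, the cost $C$ is a function of the partial matrix $W_G$ alone, so it is defined consistently on all four sets (on $\mathbb V$ through \eqref{eq:defpm}, and on $\mathbb W^+$ and $\mathbb W_{c(G)}^+$ through their submatrix $W_G$); Theorem \ref{thm:FS} is what guarantees that $C^\text{opt}$, the value of OPF \eqref{eq:OPFbim} over $\mathbb V$, is indeed the value of the \emph{exact} problem and that the relaxations may be read as lower bounds on it. The relation $\mathbb V \sqsubseteq \mathbb W^+$ then gives $C^\text{sdp} \le C^\text{opt}$; the similarity $\mathbb W^+ \simeq \mathbb W_{c(G)}^+$ gives $C^\text{sdp} = C^\text{ch}$; and $\mathbb W_{c(G)}^+ \sqsubseteq \mathbb W_G^+$ gives $C^\text{socp} \le C^\text{ch}$. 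Concatenating the three produces $C^\text{opt} \ge C^\text{sdp} = C^\text{ch} \ge C^\text{socp}$, which is the first assertion.

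For the radial case I would simply invoke the strengthened conclusion of Theorem \ref{thm: bimR}, namely that when $G$ is a tree the last relation improves to $\mathbb W_{c(G)}^+ \simeq \mathbb W_G^+$. Rerunning the identical argument, this similarity now forces $C^\text{ch} = C^\text{socp}$ in place of the inequality, so the chain collapses to $C^\text{opt} \ge C^\text{sdp} = C^\text{ch} = C^\text{socp}$.

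There is essentially no analytic obstacle here: all the real content resides in Theorems \ref{thm:FS} and \ref{thm: bimR}, and the corollary is a mechanical translation of set relations into value relations. The one point demanding care is the orientation of the inequalities. One must remember that $A \sqsubseteq B$ makes $B$ the \emph{larger} feasible set in the sense of achievable costs, so it is $\inf_B C$ that is the smaller quantity; this is precisely why the effective-subset relations yield $C^\text{sdp} \le C^\text{opt}$ and $C^\text{socp} \le C^\text{ch}$ rather than their reverses, and getting this direction backwards is the only way the short argument could go wrong.
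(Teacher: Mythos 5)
Your proof is correct and is precisely the argument the paper intends: the paper states that Theorem \ref{thm:FS} and Theorem \ref{thm: bimR} ``directly imply'' the corollary, and your write-up simply makes explicit the translation from the relations $\sqsubseteq$ and $\simeq$ (via containment of cost images) to inequalities and equalities of the infima, with the inequality directions handled correctly. No gap; this matches the paper's approach.
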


\begin{remark}
\label{remark:socp1}
\emph{Tightness.}
Theorem \ref{thm: bimR} and Corollary \ref{coro:bimR}
 imply that for radial networks one should always solve OPF-socp since it is the tightest
and the simplest relaxation of the three.
For mesh networks there is a tradeoff between OPF-socp and OPF-ch/OPF-sdp: the latter is 
tighter but requires heavier computation. 
Between OPF-ch and OPF-sdp, OPF-ch is usually preferable as they are equally tight
but OPF-ch is usually much faster to solve for large sparse networks.
See \cite{Bai2011, Jabr2012, Bose-2014-BFMe-TAC, MolzahnLesieutre2013, Andersen2013} for 
numerical studies that compare these relaxations. 
\end{remark}

\subsection{Chordal relaxation}

 Theorem \ref{thm:rank1} through Corollary \ref{coro:bimR} apply to \emph{any}
chordal extension $c(G)$ of $G$.   The choice of $c(G)$ does not affect the optimal
value of the chordal relaxation but determines its complexity.
Unfortunately the optimal choice that minimizes the complexity
of OPF-ch is NP-hard to compute.

This difficulty is due to two conflicting factors in choosing a $c(G)$.
Recall that the constraint $W_{c(G)}\succeq 0$ in the definition of $\mathbb W_{c(G)}^+$
consists of multiple constraints that the principal submatrices
$W_{c(G)}(q) \succeq 0$, one  for each (maximal) clique $q$ of $c(G)$.
When two cliques $q$ and $q'$ share a node their submatrices $W_{c(G)}(q)$ and $W_{c(G)}(q')$
share entries that must be decoupled by introducing auxiliary variables and equality constraints
on these variables.   The choice of $c(G)$ determines the number and sizes of these submatrices 
$W_{c(G)}(q)$ as well as the numbers of auxiliary variables and additional decoupling constraints.
On the one hand if $c(G)$ contains few cliques $q$ then the submatrices
$W_{c(G)}(q)$ tend to be large and expensive to compute (e.g. if $c(G)$ is the complete graph then 
there is a single clique, but $W_{c(G)} = W$ and OPF-ch is identical to OPF-sdp).
On the other hand if $c(G)$ contains many small cliques $q$ then there tends to be
more overlap and chordal relaxation tends to require more decoupling constraints.
Hence choosing a good chordal extension $c(G)$ of $G$ is important but nontrivial. 
See  \cite{Fukuda99exploitingsparsity, Nakata2003} and references therein for  
methods to compute efficient chordal relaxations of general QCQP.
For OPF \cite{MolzahnLesieutre2013} proposes effective techniques to 
reduce the number of cliques in its chordal relaxation.
To further reduce the problem size \cite{Andersen2013} proposes to carefully drop some 
of the decoupling constraints,  though the resulting relaxation can  be weaker.

\section{Feasible sets and relaxations: BFM}
\label{sec:rBFM}

We now present an SOCP relaxation of OPF in BFM
proposed in \cite{Farivar2011-VAR-SGC, Farivar-2013-BFM-TPS} in two steps.
We first relax the phase angles of $V$ and  $I$ in \eqref{eq:bfm}
and then we relax a set of quadratic equalities to inequalities.   
This derivation pinpoints the difference between
radial and mesh topologies.  It motivates a recursive version of BFM for
radial networks (Section \ref{sec:BFMt}) and the use of phase shifters for 
convexification of mesh networks (Part II \cite{Low2014b}).

\subsection{Feasible sets}
\label{subsec:rBFMfs}

Consider the following set of equations in the variables $x := (S, \ell, v, s)$ in 
$\mathbb R^{3(m+n+1)}$:\footnote{The use of complex variables is only a shorthand and
should be interpreted as operations in real variables.  For instance \eqref{eq:mdf.1} is a
shorthand for
\bqn
\sum_{k: j\rightarrow k} P_{jk} & = & \sum_{i: i\rightarrow j} \left( P_{ij} - r_{ij} \ell_{ij} \right) + p_j
\\
\sum_{k: j\rightarrow k} Q_{jk} & = & \sum_{i: i\rightarrow j} \left( Q_{ij} - x_{ij} \ell_{ij} \right) + q_j
\eqn
}
\begin{subequations}
\bq
\sum_{k: j\rightarrow k} S_{jk} & = & \sum_{i: i\rightarrow j} \left( S_{ij} - z_{ij} \ell_{ij} \right)
		 + s_j, \quad j \in N^+
\label{eq:mdf.1}
\\
v_j - v_k & = & 2\, \text{Re} \left(z_{jk}^H S_{jk} \right) - |z_{jk}|^2 \ell_{jk}, \ \ j\rightarrow k \in \tilde E
\label{eq:mdf.2}
\\
v_j \ell_{jk} & = & |S_{jk}|^2, \quad j\rightarrow k \in \tilde E
\label{eq:mdf.3}
\eq
\label{eq:mdf}
\end{subequations}
and define the solution set as:
\bqn
{\mathbb X}_{nc} & := & \{ x \in \mathbb R^{3(m+n+1)} \ \vert \ x \text{ satisfies } 
		\eqref{eq:opfv}, \eqref{eq:opfs}, \eqref{eq:mdf} \}
\eqn
Note that the vector $v$ includes $v_0$ and $s$ includes $s_0$. 
The model \eqref{eq:mdf} is first proposed in \cite{Baran1989a, Baran1989b}.
\footnote{The original model, called the DistFlow equations, in \cite{Baran1989a, Baran1989b} 
is for radial (distribution) networks, but its extension here to mesh networks  is trivial.}
It can be derived as a relaxation of BFM \eqref{eq:bfm}  as follows.
Taking the squared magnitude  of \eqref{eq:bfm.2} and replacing $|V_j|^2$ and $|I_{jk}|^2$ by
$v_j$ and $\ell_{jk}$ respectively yield \eqref{eq:mdf.3}. 
To obtain \eqref{eq:mdf.2}, use \eqref{eq:bfm.1}--\eqref{eq:bfm.2} to write $V_k = V_j - z_{jk} S_{jk} V_j^{-1}$ and 
take the squared magnitude on both sides to eliminate the phase angles of $V$ and $I$. 
These operations define a mapping $h: \mathbb C^{2(m + n +1)} \rightarrow \mathbb R^{3(m+n+1)}$ by:
for any $\tilde x = (S, I, V, s)$, $h(\tilde x) := (S, \ell, v, s)$ with 
$\ell_{jk} = |I_{jk}|^2$ and $v_j = |V_j|^2$.

Throughout this paper we assume the cost function $C(\tilde x)$ in OPF 
\eqref{eq:OPFbfm} depends on $\tilde x$ only through 
$x := h(\tilde x)$.
For example for total real line loss 
$C(\tilde x) = \sum_{(j, k) \in \tilde{E}} \, \text{Re}\, z_{jk} \ell_{jk}$.
If the cost is a weighted sum of real generation power then
$C(\tilde x) = \sum_j (c_j p_j + p_j^{d})$ where $p_j$ are the real parts of
$s_j$ and $p_j^{d}$ are the given real power
demands at buses $j$; again $C(\tilde{x})$ depends only on $x$.

Then the model \eqref{eq:mdf} is a relaxation of BFM \eqref{eq:bfm}
in the sense that the feasible set $\mathbb{\tilde X}$ of OPF in \eqref{eq:OPFbfm} is an effective
subset of $\mathbb X_{nc}$,  
 $\mathbb{\tilde X} \sqsubseteq \mathbb X_{nc}$, since $h(\mathbb{\tilde X}) \subseteq \mathbb X_{nc}$.
We now characterize the subset of $\mathbb X_{nc}$ that is equivalent to 
$\mathbb{\tilde X}$.   

Given an $ x := (S, \ell, v, s) \in \mathbb R^{3(m+n+1)}$ define $\beta({x}) \in \mathbb R^m$ by
\bq
\beta_{jk}( x) & := & \angle \left( v_j - z_{jk}^H S_{jk} \right), \quad  j \rightarrow k\in \tilde E
\label{eq:defb}
\eq
Even though $x$ does not include phase angles of $V$, $x$
\emph{implies} a phase difference across each line $j\rightarrow k \in \tilde E$
given by $\beta_{jk}(x)$.  
The subset of $\mathbb X_{nc}$ that is equivalent to $\mathbb{\tilde X}$ are those $x$ for which there exists
$\theta$ such that $\theta_j - \theta_k = \beta_{jk}(x)$.
To state this precisely let $B$ be the $m\times n$ (transposed) reduced incidence 
matrix of $\tilde{G}$:
% obtained from $C$ defined in Section \ref{subsec:la}
% by removing its row corresponding to bus 0 and taking the transpose:
\bqn
B_{lj} & = & \begin{cases}
		1 & \text{ if edge $l \in \tilde E$ leaves node $j$} \\
		-1 & \text{ if edge $l \in \tilde E$ enters node $j$} \\
		0 & \text{ otherwise}
		\end{cases}
		\quad\quad  
% \label{eq:defB}
\eqn
where $j\in N$.
Consider the set of $x \in \mathbb X_{nc}$ such that 
\bq
\!\!\!\!\!\!
\exists \theta \text{ that solves }\ 
	B\theta &\!\!\! = \!\!\! & \beta(x)  \mod 2\pi
\label{eq:cyclecond.1}
\eq
i.e., $\beta(x)$ is in the range space of $B$ (mod $2\pi$).
A solution $\theta(x)$, if exists, is unique in  $(-\pi, \pi]^n$.
Define the set
% following subset of $\mathbb X_{nc}$:
 \bqn
{\mathbb X} & \!\!\!\! := \!\!\!\! &  \{ x \in \mathbb R^{3(m+n+1)} \ \vert \ x \text{ satisfies } 
%		(\ref{eq:bfm.1a}), \eqref{eq:opfB.2}, \eqref{eq:Kirchhoff.2c}, \eqref{eq:Kirchhoff.2d},
		\eqref{eq:opfv}, \eqref{eq:opfs}, \eqref{eq:mdf},  \eqref{eq:cyclecond.1}  \}
\eqn
The following result characterizes the feasible set $\mathbb{\tilde X}$ of OPF in BFM
and follows from \cite[Theorems 2, 4]{Farivar-2013-BFM-TPS}.
\begin{theorem}
\label{thm:eqX}
$\mathbb{\tilde X} \equiv \mathbb X \subseteq \mathbb X_{nc}$.
\end{theorem}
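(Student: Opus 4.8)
The plan is to prove the two relations $\mathbb{\tilde X} \equiv \mathbb X$ and $\mathbb X \subseteq \mathbb X_{nc}$ separately, treating the subset relation first since it is the easier half. The inclusion $\mathbb X \subseteq \mathbb X_{nc}$ is immediate from the definitions: $\mathbb X$ is defined by imposing the extra constraint \eqref{eq:cyclecond.1} on top of the defining constraints of $\mathbb X_{nc}$, so every $x\in\mathbb X$ trivially satisfies \eqref{eq:opfv}, \eqref{eq:opfs}, \eqref{eq:mdf} and hence lies in $\mathbb X_{nc}$.

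The substance is the bijection $\mathbb{\tilde X} \equiv \mathbb X$. First I would construct a map from $\mathbb{\tilde X}$ into $\mathbb X$ using the mapping $h$ already defined in the excerpt: given $\tilde x = (S,I,V,s)\in\mathbb{\tilde X}$, set $x = h(\tilde x) = (S,\ell,v,s)$ with $\ell_{jk}=|I_{jk}|^2$ and $v_j=|V_j|^2$. The discussion preceding the theorem shows $h(\tilde x)\in\mathbb X_{nc}$, so the only thing to verify is that $x$ also satisfies the cycle condition \eqref{eq:cyclecond.1}. For this I would take $\theta_j := \angle V_j$ and show $\theta_j-\theta_k = \beta_{jk}(x)$ for each $j\rightarrow k$: using \eqref{eq:bfm.1}--\eqref{eq:bfm.2} to write $V_k = V_j - z_{jk}S_{jk}V_j^{-1}$, one computes $V_j V_k^H = |V_j|^2 - z_{jk}^H S_{jk}^H \cdot(\text{something})$; more directly, $V_j^H V_k$ or the relevant product reveals that $\angle(v_j - z_{jk}^H S_{jk})$ equals the phase difference $\angle V_j - \angle V_k$, which is exactly what \eqref{eq:defb} encodes. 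Thus $\beta(x) = B\theta$ with $\theta = \angle V$, establishing \eqref{eq:cyclecond.1}, so $h(\mathbb{\tilde X})\subseteq\mathbb X$.

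Next I would construct the inverse map from $\mathbb X$ back to $\mathbb{\tilde X}$ and show the two maps are mutual inverses, giving the bijection. Given $x=(S,\ell,v,s)\in\mathbb X$, by \eqref{eq:cyclecond.1} there is a unique $\theta(x)\in(-\pi,\pi]^n$ solving $B\theta=\beta(x)\bmod 2\pi$; set $\theta_0=0$ (consistent with $V_0=1\angle 0^\circ$). Then define $V_j := \sqrt{v_j}\,e^{\ii\theta_j}$, $I_{jk} := y_{jk}(V_j-V_k)$ via \eqref{eq:bfm.1}, and retain $S$ and $s$. I would then verify that this $\tilde x=(S,I,V,s)$ satisfies all of \eqref{eq:bfm}: equation \eqref{eq:bfm.1} holds by construction; \eqref{eq:bfm.3} is inherited from \eqref{eq:mdf.1} once one checks $z_{ij}|I_{ij}|^2 = z_{ij}\ell_{ij}$, i.e.\ that the reconstructed $|I_{jk}|^2$ equals $\ell_{jk}$; and \eqref{eq:bfm.2}, namely $S_{jk}=V_j I_{jk}^H$, must be recovered from \eqref{eq:mdf.2}--\eqref{eq:mdf.3} together with the phase relation $\theta_j-\theta_k=\beta_{jk}(x)$. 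The key computation is showing that the magnitude constraint \eqref{eq:mdf.3}, $v_j\ell_{jk}=|S_{jk}|^2$, together with the voltage-drop equation \eqref{eq:mdf.2} and the correct phase $\beta_{jk}$, forces $V_j I_{jk}^H = S_{jk}$ both in magnitude and in phase; here \eqref{eq:mdf.3} supplies the magnitude agreement and \eqref{eq:cyclecond.1} supplies the phase agreement.

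The main obstacle is this last phase-consistency argument: verifying that the single scalar phase difference $\beta_{jk}(x)$ recovered from $x$ is simultaneously consistent with \emph{all three} of the BFM equations, so that the reconstructed $\tilde x$ genuinely satisfies \eqref{eq:bfm.2} and not merely its squared-magnitude relaxation \eqref{eq:mdf.3}. Concretely, one must check that $V_k = V_j - z_{jk}S_{jk}V_j^{-1}$ (which holds in BFM) is reproduced by the reconstruction, and that squaring this relation recovers exactly \eqref{eq:mdf.2}; the angle $\beta_{jk}$ is precisely designed so that $\angle(V_jV_k^H)$ comes out right. The reason a \emph{unique} $\theta$ exists, and hence the reason the map is well-defined and injective, is that \eqref{eq:cyclecond.1} requires $\beta(x)$ to lie in the range of $B$, which for a connected graph pins down $\theta$ up to the reference $\theta_0=0$; this is the analogue in BFM of the cycle condition \eqref{eq:cyclecond.2} in BIM, and it is exactly the obstruction that distinguishes radial networks (where $B$ has full column rank and \eqref{eq:cyclecond.1} is automatic) from mesh networks. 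I would conclude by checking that $h$ and this reconstruction are inverse to each other, completing the bijection $\mathbb{\tilde X}\equiv\mathbb X$.
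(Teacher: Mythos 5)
Your proposal is correct and takes essentially the same route as the paper: the inclusion $\mathbb X \subseteq \mathbb X_{nc}$ is definitional, the forward direction uses $h$ together with the identity $V_j V_k^H = v_j - z_{jk}^H S_{jk}$ to verify the cycle condition, and the inverse is built by recovering the unique angle vector $\theta(x) \in (-\pi,\pi]^n$ from \eqref{eq:cyclecond.1} and checking that the reconstructed point satisfies \eqref{eq:bfm} and that the two maps are mutual inverses. The only cosmetic difference is that you define the reconstructed current by Ohm's law \eqref{eq:bfm.1} (so that \eqref{eq:bfm.2} and $|I_{jk}|^2 = \ell_{jk}$ are what must be verified), whereas the paper sets $I_{jk} := \sqrt{\ell_{jk}}\, e^{\ii (\theta_j(x) - \angle S_{jk})}$ so that \eqref{eq:bfm.2} is automatic via \eqref{eq:mdf.3} and \eqref{eq:bfm.1} is what must be verified; in both cases the core algebra is the same combination of \eqref{eq:mdf.2}--\eqref{eq:mdf.3} with the recovered phase.
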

The bijection between $\mathbb{\tilde X}$ and $\mathbb X$ is given by 
$h$ defined above restricted to $\mathbb{\tilde X}$.
Its inverse $h^{-1}(S, \ell, v, s) = (S, I, V, s)$ is defined on $\mathbb X$ 
in terms of $\theta(x)$ by:
% from $\mathbb{R}^{3m+3n+2}$ to $\mathbb{C}^{2m+2n+1}$ by (recall $\angle V_0 = 0^\circ$):
\begin{subequations}
\bq
V_j & := & \sqrt{v_j} \ e^{\ii \theta_j(x)},  \quad j \in N
% \label{eq:bfs.2}
\\
I_{jk} & := & \sqrt{\ell_{jk}}\ e^{\ii (\theta_j(x) - \angle S_{jk})}, \  j\rightarrow k \in \tilde E
% \label{eq:bfs.3}
\eq
\label{eq:defhinv}
\end{subequations}

The condition \eqref{eq:cyclecond.1} is equivalent to the cycle condition \eqref{eq:cyclecond.2}
in the bus injection model.   To see this
 fix any spanning tree $T = (N, E_T)$ of the (directed) graph $\tilde G$.   
We can assume without loss of generality 
(possibly after re-labeling the links) that $E_T$ consists of links $l = 1, \dots, n$.  
Then $B$ can be partitioned into
\bqn
B & = & \begin{bmatrix}
		B_T  \\ B_{\perp}
		\end{bmatrix}
\eqn
where the $n\times n$ submatrix $B_T$ corresponds to links in $T$ and the 
$(m-n) \times n$ submatrix $B_{\perp}$ corresponds to links in $T^\perp := G\setminus T$.
Similarly partition $\beta(x)$ into
\bqn
\beta(x) & = & \begin{bmatrix}
		\beta_T (x) \\  \beta_{\perp} (x)
		\end{bmatrix}
\eqn
The next result, proved in \cite[Theorems 2 and 4]{Farivar-2013-BFM-TPS}, provides 
a more explicit characterization of \eqref{eq:cyclecond.1} in terms of $\beta(x)$.  
When it holds this characterization has the same interpretation
of the cycle condition in \eqref{eq:cyclecond.2}: 
the voltage angle differences implied by $x$ sum to zero (mod $2\pi$) around any cycle.
 Formally let $\tilde \beta$ be the extension of $\beta$ from directed to undirected
 links: for each $j\rightarrow k \in \tilde E$ let $\tilde \beta_{jk}(x) := \beta_{jk}(x)$ and
 $\tilde \beta_{kj}(x) := - \beta_{jk}(x)$.   We say $c := (j_1, \dots, j_K)$ is an \emph{undirected}
 cycle if, for each $k=1, \dots, K$, either $j_k \rightarrow j_{k+1} \in \tilde E$ or 
 $j_{k+1} \rightarrow j_k \in \tilde E$ with the interpretation that $j_{K+1} := j_1$;  $(j_k, j_{k+1}) \in c$  
 denotes one of these links.
\begin{theorem}
\label{thm:anglecond}
An $x \in \mathbb X_{nc}$ satisfies \eqref{eq:cyclecond.1} if and only if
%	\bq
%	B_{\perp} B_T^{-1} \beta_T(x) & = & \beta_{\perp}(x)
%	\quad\quad (\text{mod }2\pi)
%	\label{eq:anglecond}
%	\eq
around each undirected cycle $c$ we have
\bq
\sum_{(j,k) \in c} \, \tilde \beta_{jk}(x) & = & 0 \qquad \text{ mod } 2\pi
\label{eq:cyclecond.3}
\eq
In that case $\theta(x) = \mathcal P \left(B_T^{-1}\beta_T(x) \right)$
is the unique solution of \eqref{eq:cyclecond.1} in $(-\pi, \pi]^n$, 
where $\mathcal P(\phi)$ projects $\phi$ to $(-\pi, \pi]^n$.
\end{theorem}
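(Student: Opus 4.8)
The plan is to reduce the range-space condition \eqref{eq:cyclecond.1} to a statement about the cycle space of $\tilde G$, exploiting that $B_T$ is the reduced incidence matrix of a spanning tree and is therefore square, invertible, and \emph{totally unimodular}, so that $B_T^{-1}$ has integer entries. The algebraic facts I would assemble first are: (i) every undirected cycle $c$ carries an incidence vector $z_c \in \{-1,0,+1\}^m$ (coefficient $+1$ on a link traversed along its orientation, $-1$ against it, $0$ if absent) satisfying $z_c^T B = 0$, since a cycle flow conserves at every node so that $B^T z_c = 0$; and (ii) with the edge ordering that lists tree links first, the fundamental cycle matrix has the block form $F = [\,F_T \mid I_{m-n}\,]$ with $F_T = -B_\perp B_T^{-1}$, whose rows span the cycle space over $\mathbb Z$. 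A general cycle vector is then an integer combination of these fundamental ones, so for the converse it will suffice to treat fundamental cycles.

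For the forward direction, suppose $\theta$ solves \eqref{eq:cyclecond.1}, i.e. $B\theta = \beta(x) + 2\pi k$ for some $k \in \mathbb Z^m$. For any undirected cycle $c$ I would left-multiply by $z_c^T$: since $z_c^T B = 0$ the $\theta$ term vanishes exactly, leaving $z_c^T \beta(x) = -2\pi\, z_c^T k \equiv 0 \pmod{2\pi}$. Recognizing $z_c^T \beta(x) = \sum_{(j,k)\in c} \tilde\beta_{jk}(x)$ from the definition of $\tilde\beta$ (a forward traversal contributes $+\beta_{jk}=\tilde\beta_{jk}$, a backward one $-\beta_{jk}=\tilde\beta_{kj}$) then yields the cycle condition \eqref{eq:cyclecond.3}.

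For the converse I would construct the candidate $\theta := B_T^{-1}\beta_T(x)$. The tree rows hold exactly, $B_T\theta = \beta_T(x)$. For the non-tree rows I need $B_\perp \theta = B_\perp B_T^{-1}\beta_T(x) \equiv \beta_\perp(x) \pmod{2\pi}$. Applying \eqref{eq:cyclecond.3} to each \emph{fundamental} cycle (a non-tree link together with its tree path) gives, in matrix form, $F\beta(x)\equiv 0$, i.e. $F_T\beta_T(x) + \beta_\perp(x) \equiv 0$, i.e. $-B_\perp B_T^{-1}\beta_T(x) + \beta_\perp(x) \equiv 0 \pmod{2\pi}$, which is precisely the required non-tree identity. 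Hence $B\theta \equiv \beta(x)\pmod{2\pi}$ and \eqref{eq:cyclecond.1} holds; note this uses only fundamental cycles, which are special cases of the hypothesis.

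Finally, for uniqueness in $(-\pi,\pi]^n$: any solution satisfies $B_T\theta \equiv \beta_T(x)\pmod{2\pi}$, so $\theta \equiv B_T^{-1}\beta_T(x)\pmod{2\pi}$ componentwise, \emph{because} $B_T^{-1}$ is integer and hence $B_T^{-1}(2\pi k)$ is $2\pi$ times an integer vector; the unique representative in $(-\pi,\pi]^n$ is therefore $\theta(x) = \mathcal P(B_T^{-1}\beta_T(x))$. I expect the main obstacle to be the careful mod-$2\pi$ bookkeeping rather than any deep idea: over $\mathbb R$ the result is simply ``$\beta(x)$ lies in the range of $B$ iff it annihilates the cycle space,'' but one must carry the integer shift $k$ through each step, and total unimodularity of $B_T$ is what converts the lattice ambiguity $B_T^{-1}(2\pi\mathbb Z^n)$ into an honest componentwise mod-$2\pi$ ambiguity, thereby delivering the uniqueness claim.
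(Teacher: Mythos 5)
Your proposal is correct, and while it shares the paper's skeleton (spanning-tree/cotree split of $B$, integrality of $B_T^{-1}$, reduction of the non-tree rows to the condition $B_\perp B_T^{-1}\beta_T \equiv \beta_\perp \bmod 2\pi$, and the final formula $\theta(x) = \mathcal P(B_T^{-1}\beta_T(x))$), it differs in execution in two ways that are worth noting. First, your forward direction handles \emph{all} undirected cycles in one stroke via the orthogonality $z_c^T B = 0$ of cycle incidence vectors to the range of $B$; the paper instead establishes \eqref{eq:cyclecond.3} only for the basis (fundamental) cycles $c(i,j)$ and then asserts ``since this holds for all basis cycles, it holds for all undirected cycles'' without proof --- your argument makes that step unnecessary, since in your converse the fundamental cycles are simply a special case of the hypothesis, so the integer-combination spanning fact you mention is never actually load-bearing. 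Second, where the paper obtains the interpretation of $B_\perp B_T^{-1}\beta_T$ as angle sums along tree paths by explicitly computing the entries of $B_T^{-1}$ (its equation \eqref{eq:Binv}, after re-orienting tree links away from the root), you invoke the standard identity $F = [\,{-B_\perp B_T^{-1}} \mid I\,]$ for the fundamental cycle matrix together with total unimodularity of $B_T$; these deliver the same algebraic facts (in particular that $B_T^{-1}$, hence $\hat k_\perp$, is integral), the paper's route being more self-contained and concrete, yours more structural. The paper also threads its argument through Lemma \ref{lemma.1} (uniqueness of the equivalence class of solutions $(\theta,k)$), whereas your uniqueness claim follows directly from integrality of $B_T^{-1}$; both are valid, and the only point you leave implicit --- that replacing $B_T^{-1}\beta_T$ by its projection $\mathcal P(B_T^{-1}\beta_T)$ preserves solutionhood of \eqref{eq:cyclecond.1} --- is immediate because $B$ is an integer matrix, so shifting $\theta$ by $2\pi\mathbb Z^n$ only shifts $k$ by an integer vector.
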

\vspace{0.1in}
Theorem \ref{thm:anglecond} determines when the voltage magnitudes $v$ of a given 
$x$ can be assigned phase angles $\theta(x)$ so that the resulting $\tilde x := h^{-1}(x)$ is 
a power flow solution in $\mathbb{\tilde X}$.

\subsection{SOCP relaxation}

The set $\mathbb X_{nc}$ that contains the (equivalent) feasible set $\mathbb X$ of OPF
is still nonconvex because of the quadratic equalities in \eqref{eq:mdf.3}.  Relax them  to
inequalities:
\bq
v_j  \,  \ell_{jk} & \geq &  |S_{jk}|^2,  \qquad   (j, k)\in \tilde{E}
\label{eq:mdf.socp}
\eq
and define the set: 
 \bqn
{\mathbb X}^+ & \!\!\!\!\!\!\!\! := \!\!\!\!\!\!\!\! & 
	\{ x \in \mathbb R^{3(m+n+1)} \ \vert \ x \text{ satisfies } 
	 \eqref{eq:opfv}, \eqref{eq:opfs}, \eqref{eq:mdf.1}, \eqref{eq:mdf.2}, \eqref{eq:mdf.socp}
					  \}
\eqn
Clearly 
$\mathbb{\tilde X} \stackrel{h}{\equiv} \mathbb X \subseteq \mathbb X_{nc} \subseteq \mathbb X^+$;
see Figure \ref{fig:Xsets}.
Moreover $\mathbb X^+$ is a second-order cone in the rotated form.
%%%%%%%%%%%%%%%%
\begin{figure}[htbp]
\centering
\includegraphics[width=0.3\textwidth]{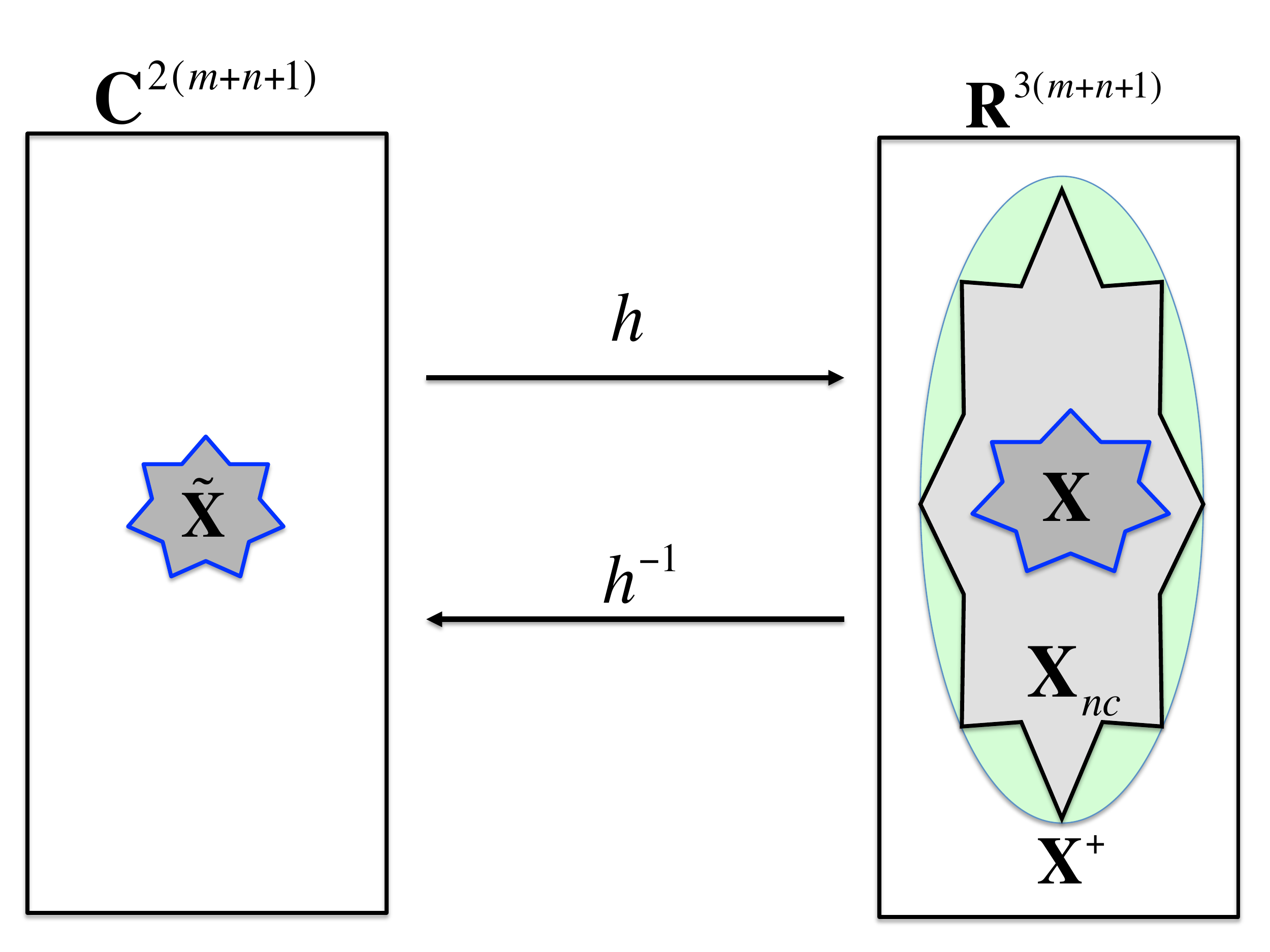}
\caption{Feasible sets $\mathbb{\tilde X}$ of OPF \eqref{eq:OPFbfm} in BFM, 
	its equivalent set $\mathbb X$ (defined by $h$)
	and its relaxations $\mathbb X_{nc}$ and $\mathbb X^+$.  If $\tilde G$ is a tree then
	$\mathbb X = \mathbb X_{nc}$.
}
\label{fig:Xsets}
\end{figure}
%%%%%%%%%%%%%%%%

The three sets $\mathbb X$, $\mathbb X_{nc}$, $\mathbb X^+$
define the following problems:
\\
\noindent
\textbf{OPF:}
\bq
 \underset{ x } {\text{min}}   \ \  C( x) &  \text{ subject to } &  x \in \mathbb X
 \label{eq:bfmOPF-e}
\eq

\noindent
\textbf{OPF-nc:}
\bq
 \underset{ x} {\text{min}}   \ \  C( x) & \text{ subject to } &  x \in \mathbb X_{nc}
 \label{eq:bfmOPF-nc}
\eq

\noindent
\textbf{OPF-socp:}
\bq
\underset{ x } {\text{min}}   \ \  C( x) & \text{ subject to } &  x \in \mathbb X^+
\label{eq:bfmOPF-socp}
\eq

The next theorem follows from the results in \cite{Farivar-2013-BFM-TPS}
and implies that OPF \eqref{eq:OPFbfm} is equivalent to minimization 
over $\mathbb X$ and OPF-socp is its SOCP relaxation.
Moreover for radial networks voltage and current angles can be ignored and 
OPF \eqref{eq:OPFbfm} is equivalent to OPF-nc.
\begin{theorem} 
\label{thm:bfm.1}
$\mathbb{\tilde X} \equiv \mathbb X \subseteq \mathbb X_{nc} \subseteq \mathbb X^+$.
If $\tilde G$ is a tree then 
$\mathbb{\tilde X} \equiv \mathbb X = \mathbb X_{nc} \subseteq \mathbb X^+$.
\end{theorem}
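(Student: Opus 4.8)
The plan is to assemble the chain of relations from pieces that are mostly either already established or immediate from the definitions, reserving the only real work for the tree refinement. First I would dispose of the equivalence $\mathbb{\tilde X} \equiv \mathbb X$ by invoking Theorem \ref{thm:eqX}, which already supplies the bijection $h$ (with inverse given by \eqref{eq:defhinv}) between these two sets. The containment $\mathbb X \subseteq \mathbb X_{nc}$ is immediate from the definitions, since $\mathbb X$ is precisely the set of $x \in \mathbb X_{nc}$ that additionally satisfy the angle-consistency condition \eqref{eq:cyclecond.1}, and adding a constraint can only shrink the set.

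Next I would verify $\mathbb X_{nc} \subseteq \mathbb X^+$. The only difference between the defining constraints of these two sets is that $\mathbb X_{nc}$ imposes the quadratic equality \eqref{eq:mdf.3}, namely $v_j \ell_{jk} = |S_{jk}|^2$, whereas $\mathbb X^+$ imposes only the relaxed inequality \eqref{eq:mdf.socp}, $v_j \ell_{jk} \geq |S_{jk}|^2$; all remaining constraints coincide. Hence any $x$ satisfying the equality \emph{a fortiori} satisfies the inequality, which gives the containment and completes the first line of the theorem.

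It remains to prove the tree refinement, that is, $\mathbb X = \mathbb X_{nc}$ when $\tilde G$ is a tree. Since $\mathbb X \subseteq \mathbb X_{nc}$ always holds, it suffices to establish the reverse containment $\mathbb X_{nc} \subseteq \mathbb X$, which amounts to checking that every $x \in \mathbb X_{nc}$ automatically satisfies \eqref{eq:cyclecond.1}. Here I would appeal to Theorem \ref{thm:anglecond}: a tree has no undirected cycles, so the cycle condition \eqref{eq:cyclecond.3} holds vacuously for every $x$, and the theorem then guarantees that \eqref{eq:cyclecond.1} is solvable and that $\theta(x) = \mathcal P(B_T^{-1}\beta_T(x))$ is the unique solution. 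Equivalently and more directly, for a tree on $n+1$ nodes one has $m = n$, so the reduced incidence matrix $B$ is square of size $n \times n$; since a connected graph on $n+1$ nodes has reduced incidence matrix of full column rank $n$, $B$ is nonsingular and $\theta = B^{-1}\beta(x)$ solves $B\theta = \beta(x)$ exactly, hence also mod $2\pi$, for arbitrary right-hand side. Either route yields $x \in \mathbb X$, giving $\mathbb X_{nc} \subseteq \mathbb X$ and therefore equality.

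I anticipate no serious obstacle here: every step is either a citation of an earlier result or an immediate consequence of the definitions. The only point demanding care is the nonsingularity of $B$ in the tree case, which rests on the standard fact that the reduced incidence matrix of a connected graph on $n+1$ vertices has full column rank $n$; for a tree this matrix is moreover square and hence invertible, making \eqref{eq:cyclecond.1} solvable for every $\beta(x)$.
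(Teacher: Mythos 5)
Your proposal is correct and follows essentially the same route as the paper's proof: the first chain is obtained from Theorem \ref{thm:eqX} together with the definitions of $\mathbb X_{nc}$ and $\mathbb X^+$, and the tree case is settled by observing that $B = B_T$ is square ($m = n$) and invertible, so \eqref{eq:cyclecond.1} is solvable for every $x \in \mathbb X_{nc}$. Your alternative route through Theorem \ref{thm:anglecond} (the cycle condition holding vacuously on a tree) is also valid but is just a repackaging of the same fact.
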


Let $C^\text{opt}$ be the optimal cost of OPF \eqref{eq:OPFbfm} in the branch flow model.
Let $C^\text{opf}$, $C^\text{nc}$, $C^\text{socp}$ be the optimal costs of OPF \eqref{eq:bfmOPF-e}, 
OPF-nc \eqref{eq:bfmOPF-nc}, OPF-socp \eqref{eq:bfmOPF-socp}
respectively defined above.  Theorem \ref{thm:bfm.1} implies
\begin{corollary}
\label{coro:bfm.1}
$C^\text{opt} = C^\text{opf} \geq C^\text{nc} \geq C^\text{socp}$.   
If $\tilde G$ is a tree then 
$C^\text{opt} = C^\text{opf} = C^\text{nc} \geq C^\text{socp}$.
\end{corollary}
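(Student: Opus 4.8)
The plan is to read off the corollary directly from the set relations in Theorem \ref{thm:bfm.1}, using only the standing assumption that the cost $C(\tilde x)$ depends on $\tilde x$ only through $x := h(\tilde x)$. The whole argument is a translation of set inclusions and a set bijection into relations among optimal values, so almost all of the work has already been done in Theorem \ref{thm:bfm.1}.

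First I would establish the equality $C^\text{opt} = C^\text{opf}$. Theorem \ref{thm:bfm.1} supplies a bijection $h$ from $\mathbb{\tilde X}$ onto $\mathbb X$. Since by assumption $C(\tilde x) = C(h(\tilde x))$ for every $\tilde x \in \mathbb{\tilde X}$, the bijection is cost-preserving, and minimizing $C(\tilde x)$ over $\mathbb{\tilde X}$ attains the same value as minimizing $C(x)$ over the image $h(\mathbb{\tilde X}) = \mathbb X$. Hence the optimal value of OPF \eqref{eq:OPFbfm} equals that of OPF \eqref{eq:bfmOPF-e}, i.e. $C^\text{opt} = C^\text{opf}$.

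Next I would obtain the two inequalities from the chain of inclusions $\mathbb X \subseteq \mathbb X_{nc} \subseteq \mathbb X^+$ in Theorem \ref{thm:bfm.1}. Since the objective $C(x)$ is fixed across OPF \eqref{eq:bfmOPF-e}, OPF-nc \eqref{eq:bfmOPF-nc}, and OPF-socp \eqref{eq:bfmOPF-socp}, enlarging the feasible set can only decrease (or leave unchanged) the minimum, giving $C^\text{opf} \geq C^\text{nc} \geq C^\text{socp}$. Combining with the first step yields $C^\text{opt} = C^\text{opf} \geq C^\text{nc} \geq C^\text{socp}$. For the radial case, Theorem \ref{thm:bfm.1} sharpens $\mathbb X \subseteq \mathbb X_{nc}$ to the equality $\mathbb X = \mathbb X_{nc}$; minimizing the same objective over two equal sets gives equal values, so $C^\text{opf} = C^\text{nc}$, which upgrades the first inequality and produces $C^\text{opt} = C^\text{opf} = C^\text{nc} \geq C^\text{socp}$.

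I do not expect a genuine obstacle: the substantive content is entirely packed into Theorem \ref{thm:bfm.1}, and the only point demanding care is that $h$ is cost-preserving, which is exactly the standing assumption that $C$ factors through $h$. Were that assumption dropped, the equality $C^\text{opt} = C^\text{opf}$ could fail, so the single step worth stating explicitly is the invocation of $C(\tilde x) = C(h(\tilde x))$ when converting the bijection into an equality of optimal values.
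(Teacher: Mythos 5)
Your proposal is correct and is essentially the paper's own argument: the paper gives no separate proof of Corollary \ref{coro:bfm.1}, stating only that Theorem \ref{thm:bfm.1} implies it, and your write-up simply makes explicit the routine translation (cost-preserving bijection $h$ gives $C^\text{opt} = C^\text{opf}$, nested feasible sets give the inequalities, and $\mathbb X = \mathbb X_{nc}$ for trees gives the extra equality). Your remark that the standing assumption $C(\tilde x) = C(h(\tilde x))$ is the one point that must be invoked is exactly right and matches the paper's setup.
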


\begin{remark}
\label{remark:socp2}
\emph{SOCP relaxation.}
 Suppose one solves OPF-socp and obtains an optimal solution 
$x^\text{socp}:= (S, \ell, v, s) \in \mathbb X^+$.   For radial networks
if $x^\text{socp}$ attains equality in \eqref{eq:mdf.socp}
then $x^\text{socp} \in \mathbb X_{nc}$ and Theorem \ref{thm:bfm.1} implies 
that an optimal solution $\tilde x^\text{opt} := (S, I, V, s) \in \mathbb{\tilde X}$ of OPF 
\eqref{eq:OPFbfm} can be recovered from $x^\text{socp}$.
Indeed $\tilde x^\text{opt} = h^{-1}(x^\text{socp})$ where $h^{-1}$ is 
defined in \eqref{eq:defhinv}.
Alternatively one can use the angle recovery algorithms in \cite[Part I]{Farivar-2013-BFM-TPS}
to recover $\tilde x^\text{opt}$.
For mesh networks $x^\text{socp}$ needs to both attain equality in \eqref{eq:mdf.socp} and
satisfy the cycle condition \eqref{eq:cyclecond.1} in order for an optimal solution $\tilde x^\text{opt}$
to be recoverable. 
Our experience with various practical test networks suggests that
$x^\text{socp}$ usually attains equality in \eqref{eq:mdf.socp} but, for mesh networks, rarely satisfes \eqref{eq:cyclecond.1} 
\cite{Farivar2011-VAR-SGC, Farivar-2013-BFM-TPS, Gan-2014-BFMt-TAC, Bose-2014-BFMe-TAC}.  
Hence OPF-socp is effective for radial networks
 but not for mesh networks (in both BIM and BFM).
\end{remark}

\subsection{Equivalence}
\label{sec:eq}

Theorem \ref{thm:bfm.1} establishes a bijection between $\mathbb X$ and
the feasible set $\mathbb{\tilde X}$ of OPF  \eqref{eq:OPFbfm} in BFM. 
Theorem \ref{thm:FS} establishes a bijection between  $\mathbb W_G$ and
the feasible set $\mathbb V$ of OPF \eqref{eq:OPFbim} in BIM.
Theorem \ref{thm:bim=bfm} hence implies that 
$\mathbb X \equiv \mathbb{\tilde X} \equiv \mathbb V \equiv \mathbb W_G$.
Moreover their SOCP relaxations are equivalent in these two models
\cite{Bose-2012-BFMe-Allerton, Bose-2014-BFMe-TAC}.
Define the set of partial matrices defined on $G$ that are $2\times 2$ psd rank-1 but
do not satisfy the cycle condition \eqref{eq:cyclecond.2}:
\bqn
 \mathbb W_{nc} & := & \left\{\ W_{nc}  \ \vert \ W_G \text{ satisfies } \eqref{eq:opfW}, 
 			 W_G(j, k) \succeq 0, 
    	 	 \text{ rank } W_G(j,k) = 1 \text{ for all } (j,k)\in E \ \right\}
\eqn
Clearly $\mathbb W_G \subseteq \mathbb W_{nc} \subseteq \mathbb W_G^+$ in general
and $\mathbb W_G = \mathbb W_{nc} \subseteq \mathbb W_G^+$ for radial networks.
% (see \cite{Bose-2012-BFMe-Allerton, Bose-2014-BFMe-TAC} for the bijection 
% between $\mathbb X^+$ and $\mathbb W_G^+$).
\begin{theorem}
\label{thm:eq}
$\mathbb X \equiv \mathbb W_G$,  $\mathbb X_{nc} \equiv \mathbb W_{nc}$ and 
$\mathbb X^+ \equiv \mathbb W_G^+$.
\end{theorem}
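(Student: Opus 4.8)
The plan is to observe first that the equivalence $\mathbb X \equiv \mathbb W_G$ is essentially free: Theorem \ref{thm:FS} gives $\mathbb V \equiv \mathbb W_G$, Theorem \ref{thm:bfm.1} gives $\mathbb{\tilde X} \equiv \mathbb X$, and Theorem \ref{thm:bim=bfm} gives $\mathbb V \equiv \mathbb{\tilde X}$, so composing yields $\mathbb X \equiv \mathbb{\tilde X} \equiv \mathbb V \equiv \mathbb W_G$ (as already noted above). The genuinely new content is the correspondence between the two \emph{relaxations}, $\mathbb X_{nc} \equiv \mathbb W_{nc}$ and $\mathbb X^+ \equiv \mathbb W_G^+$, whose elements need not be power flow solutions and so are not covered by the earlier theorems. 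I would handle all three by exhibiting a single explicit map on the ambient spaces and showing that it restricts to each of the three bijections.

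First I would define the map $g$ sending a Hermitian partial matrix $W_G$ on $G$ (with positive diagonal) to a BFM vector $x = (S, \ell, v, s)$ by $v_j := [W_G]_{jj}$, by $S_{jk} := y_{jk}^H\big([W_G]_{jj} - [W_G]_{jk}\big)$ for each oriented edge $j\rightarrow k$, with $\ell_{jk}$ read off from \eqref{eq:mdf.2} and $s_j$ from the balance \eqref{eq:mdf.1}. Its candidate inverse sets $[W_G]_{jj} := v_j$ and $[W_G]_{jk} := v_j - z_{jk}^H S_{jk}$. Because $z_{jk} = 1/y_{jk}$ gives $z_{jk}^H S_{jk} = [W_G]_{jj} - [W_G]_{jk}$, the two formulas invert each other on the $(S,v)$ coordinates; since \eqref{eq:mdf.1}--\eqref{eq:mdf.2} hold throughout $\mathbb X$, $\mathbb X_{nc}$ and $\mathbb X^+$ (only \eqref{eq:mdf.3} is ever relaxed), they also recover $\ell$ and $s$, so $g$ is a bijection at the ambient level. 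The hypothesis $[W_G]_{jj} = v_j > 0$, coming from $\underline v_j > 0$, is exactly what makes the inverse and the angle identity below well defined.

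Next I would verify that $g$ matches constraints one-for-one via three identities. (a) \emph{Injection}: combining $S_{jk} = y_{jk}^H([W_G]_{jj} - [W_G]_{jk})$ with the sending/receiving-end relation $-(S_{ij} - z_{ij}\ell_{ij}) = y_{ij}^H([W_G]_{jj} - [W_G]_{ji})$ and the balance \eqref{eq:mdf.1} gives $s_j = \sum_{k: j\sim k} y_{jk}^H([W_G]_{jj} - [W_G]_{jk})$, so that \eqref{eq:opfW.1} and \eqref{eq:opfW.2} are literally \eqref{eq:opfs} and \eqref{eq:opfv}. (b) \emph{The $2\times 2$ minor}: a direct calculation gives
\[
[W_G]_{jj}[W_G]_{kk} - \left| [W_G]_{jk} \right|^2 \ = \ |z_{jk}|^2 \left( v_j \ell_{jk} - |S_{jk}|^2 \right),
\]
so, since $|z_{jk}|^2 > 0$ and the diagonals are positive, $W_G(j,k) \succeq 0 \Leftrightarrow v_j \ell_{jk} \geq |S_{jk}|^2$ (i.e.\ \eqref{eq:mdf.socp}) and $\mathrm{rank}\,W_G(j,k) = 1 \Leftrightarrow v_j \ell_{jk} = |S_{jk}|^2$ (i.e.\ \eqref{eq:mdf.3}). (c) \emph{Angle}: since $v_j - z_{jk}^H S_{jk} = [W_G]_{jk}$, we have $\beta_{jk}(x) = \angle [W_G]_{jk}$, and with $\tilde\beta_{kj} = -\beta_{jk}$ matching $\angle[W_G]_{kj} = -\angle[W_G]_{jk}$, Theorem \ref{thm:anglecond} makes the BFM cycle condition \eqref{eq:cyclecond.1} coincide verbatim with the BIM cycle condition \eqref{eq:cyclecond.2}.

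Finally I would conclude by noting that each set is carved from the ambient space by a selection of these constraints: $\mathbb W_G^+ \leftrightarrow \mathbb X^+$ by (a) and the $\succeq$ half of (b); $\mathbb W_{nc} \leftrightarrow \mathbb X_{nc}$ by (a) and the rank-$1$ half of (b); and $\mathbb W_G \leftrightarrow \mathbb X$ by (a), the rank-$1$ half of (b), and (c). Because $g$ carries each constraint to its counterpart, it restricts to a bijection on each pair, proving all three equivalences. I expect the main obstacle to be the bookkeeping behind identities (a) and (b): correct sign handling over the oriented graph $\tilde G$ (sending- versus receiving-end power, edges into versus out of a bus, and the symmetry $y_{ij} = y_{ji}$), together with the consistent use of $v_j > 0$ to justify the divisions and the angle recovery; the rest is routine substitution.
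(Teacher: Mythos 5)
Your proposal is correct and follows essentially the same route as the paper: the same linear map $g$ (with $\ell$ and $s$ determined equivalently by \eqref{eq:mdf.2} and \eqref{eq:mdf.1}), the same inverse $[W_G]_{jk} := v_j - z_{jk}^H S_{jk}$, the same determinant identity $[W_G]_{jj}[W_G]_{kk} - |[W_G]_{jk}|^2 = |z_{jk}|^2(v_j\ell_{jk} - |S_{jk}|^2)$ to match the psd/rank-1 constraints with \eqref{eq:mdf.socp}/\eqref{eq:mdf.3}, and the same appeal to Theorem \ref{thm:anglecond} via $\angle[W_G]_{jk} = \beta_{jk}(x)$ to match the cycle conditions. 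The only differences are organizational (constraint-by-constraint matching versus the paper's Steps 1--3, and your observation that $\mathbb X \equiv \mathbb W_G$ also follows by composing Theorems \ref{thm:bim=bfm}, \ref{thm:FS} and \ref{thm:bfm.1}, which the paper likewise notes just before the theorem).
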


% \begin{comment}
The bijection between $\mathbb X^+$ and $\mathbb W_G^+$ is a linear mapping defined as follows.
Let $\mathbb W_G \subseteq \mathbb C^{2m+n+1}$ denote the set of Hermitian 
partial matrices (including $[W_G]_{00} = v_0$ which is given).   Let ${x} := (S, \ell, v, s)$ 
denote vectors in $\mathbb R^{3(m+n+1)}$.
Define the linear mapping $g :\mathbb W_G^+ \rightarrow \mathbb X^+$ by 
${x}  = g(W_G)$ where 
\bqn
S_{jk} &:= &   y_{jk}^H \left( [W_G]_{jj} - [W_G]_{jk} \right),   \ \  j\rightarrow k 
\label{eq:g.1}
\\
\ell_{jk} & \!\!\!   := \!\!\!   &   |y_{jk}|^2 \left( [W_G]_{jj} + [W_G]_{kk} - [W_G]_{jk} - [W_G]_{kj} \right),
 \ j \rightarrow k
\label{eq:g.2}
\\
v_j & := &  [W_G]_{jj}, \ \   j\in N^+
\label{eq:g.3}
\\
s_j & := & \sum_{k: j\sim k} y_{jk}^H \left( [W_G]_{jj} - [W_G]_{jk} \right), \ \  j\in N^+
\eqn
Its inverse $g^{-1}: \mathbb X^+ \rightarrow \mathbb W_G^+$ is 
$W_G = g^{-1}(x)$ where
$[W_G]_{jj} := v_j$  for $j \in N^+$ and $[W_G]_{jk} := v_j - z_{jk}^H S_{jk} =: [W_G]_{kj}^H$
for $j\rightarrow k$.
The mapping $g$ (and its inverse $g^{-1}$) restricted to $\mathbb W_G$ $(\mathbb W_{nc})$ and 
$\mathbb X$ $(\mathbb X_{nc})$ define
the bijection between them.

\section{BFM for radial networks}
\label{sec:BFMt}

Theorem \ref{thm:bfm.1} implies that for radial networks the model \eqref{eq:mdf} is exact.
This is because the reduced incident matrix $B$ in \eqref{eq:cyclecond.1} is $n\times n$ and 
invertible, so the cycle condition is always satisfied \cite[Theorem 4]{Farivar-2013-BFM-TPS}.
Hence a solution in $\mathbb X_{nc}$
can be mapped to a branch flow solution in $\mathbb{\tilde X}$ by the mapping $h^{-1}$
defined in \eqref{eq:defhinv}.
For radial networks this model has two advantages: (i) it has a recursive structure that simplifies
 computation, and (ii) it has a linear approximation that provides
simple bounds on branch powers $S_{jk}$ and voltage magnitudes $v_j$,
as we now show.

\subsection{Recursive equations and graph orientation}
\label{subsubsec:re}

The model \eqref{eq:mdf} holds for any graph orientation of $\tilde G$.  It has a recursive
structure when $\tilde G$ is a tree.  In that case different orientations have different boundary 
conditions that initialize the recursion and may be convenient for different applications.
Without loss of generality we take bus 0 as the root of the tree.  
We discuss two different orientations: one where every link points \emph{away from} bus 0 and the other where every link points \emph{towards} bus 0.
\footnote{An alternative model
is to use an \emph{un}directed graph and, for each link $(j,k)$, the variables $(S_{jk}, \ell_{jk})$ 
and $(S_{kj}, \ell_{kj})$ are defined for both directions, with the additional equations 
$S_{jk} + S_{kj} = z_{jk}\ell_{jk}$ and $\ell_{kj} = \ell_{jk}$.
}

\noindent
\emph{Case I:  Links point away from bus 0.}
Model \eqref{eq:mdf} reduces to:
\begin{subequations}
\bq
\!\!\!\!\!\!\!\!\!\!\!\!\!\!\!\!
\sum_{k: j\rightarrow k} S_{jk} & \!\!\! = \!\!\! & S_{ij} - z_{ij} \ell_{ij} + s_j, \quad j \in N^+
\label{eq:df.1}
\\
\!\!\!\!\!\!\!\!\!\!\!\!\!\!\!\!
v_j - v_k &  \!\!\! = \!\!\!  & 2\, \text{Re} \left(z_{jk}^H S_{jk} \right) - |z_{jk}|^2 \ell_{jk}, \ j\rightarrow k \in \tilde E
% \nonumber
% \\
\label{eq:df.2}
\\
\!\!\!\!\!\!\!\!\!\!\!\!\!\!\!\!
v_j \ell_{jk} &  \!\!\! = \!\!\!  & |S_{jk}|^2, \quad j\rightarrow k \in \tilde E
\label{eq:df.3}
\eq
\label{eq:df.a}
\end{subequations}
where bus $i$ in \eqref{eq:df.1} denotes the unique parent of node $j$
(on the unique path from node 0 to node $j$), with the understanding that
if $j=0$ then $S_{i0} := 0$ and $\ell_{i0} := 0$.
Similarly when $j$ is a leaf node\footnote{A node $j$ is a \emph{leaf} node if
there exists no $i$ such that $i\rightarrow j \in \tilde E$.}
 all $S_{jk}=0$ in \eqref{eq:df.1}.
%%%%%%%%%%%%%%%%%%%%%%
%  \begin{comment}
% These boundary conditions can be made explicit by supplementing 
% \eqref{eq:df.1} with:
% \begin{subequations}
% \bq
% s_0 & = & \sum_{k: 0\rightarrow k} S_{0k} 
% \label{eq:df.4}
% \\
% s_j & = & - S_{ij} + z_{ij} \ell_{ij}   \ \ \text{ if $j$ is a leaf node}
% \label{eq:df.5}
% \eq
% \label{eq:df.b}
% \end{subequations}
% The model \eqref{eq:df.a}--\eqref{eq:df.b} is called the \emph{DistFlow} equations
% and first proposed in \cite{Baran1989a, Baran1989b}.
% \end{comment}
%%%%%%%%%%%%%%%%%%%%%%
The model \eqref{eq:df.a} is called the \emph{DistFlow} equations
and first proposed in \cite{Baran1989a, Baran1989b}.

Its recursive structure is exploited in \cite{ChiangBaran1990} to analyze the power
flow solutions given an $(s_j, j\in N)$, as we now explain using the special case 
of a linear network with $n+1$ buses that represents a main feeder.
To simplify notation denote $\left(S_{j(j+1)}, \ell_{j(j+1)} \right)$ and $z_{j(j+1)}$ by 
$(S_j, \ell_j)$ and $z_j$ respectively.  Then the DistFlow equations \eqref{eq:df.a} reduce to ($v_0$ is given):
\begin{subequations}
\bq
 \!\!\!\!\!\!\!\!
S_{j+1} & \!\!\!\!\! =  \!\!\!\!\! & S_{j} - z_{j} \ell_{j} + s_{j+1}, \ \  j = 0, \dots, n-1
\label{eq:cb.1}
\\
 \!\!\!\!\!\!\!\!
v_{j+1}&  \!\!\!\!\! =  \!\!\!\!\! & v_j - 2 \text{Re} (z_{j}^H S_{j}) + |z_{j}|^2 \ell_{j}, \ \ j = 0, \dots, n-1
\label{eq:cb.2}
\\
 \!\!\!\!\!\!\!\!
v_j \ell_{j} &  \!\!\!\!\!  =  \!\!\!\!\! & |S_{j}|^2, \ \  j = 0, \dots, n-1
\label{eq:cb.3}
\\
 \!\!\!\!\!\!\!\!
S_0 &  \!\!\!\!\! =  \!\!\!\!\! & s_0, \quad S_n \ = \ 0
\label{eq:cb.4}
\eq
\label{eq:cb}
\end{subequations}
Let $x_j := (S_j, \ell_j, v_j)$, $j \in N^+$.  If $s_0$ were known then
one can start with $(v_0, s_0)$ and use the recursion \eqref{eq:cb.1}--\eqref{eq:cb.3} 
to compute $x_j$ in terms of $s_0 = S_0$, i.e., \eqref{eq:cb} can
be collapsed into functions of the scalar variable $s_0$ (recall that $(s_j, j \in N)$ are given):
\bq
x_j & = & f_j(s_0), \quad j \in N^+
\label{eq:lnDF.sol1}
\eq
Use the boundary condition \eqref{eq:cb.4}, $S_n  =  f_{n}(s_0) = 0$,
to solve for the scalar variable $s_0$.   The other variables $x_j$ can then be computed
from \eqref{eq:lnDF.sol1}.
This method can be extended to a general radial network with laterals \cite{ChiangBaran1990}.
% Solving the nonlinear equations \eqref{eq:DFsol} can in general be difficult and
% various techniques have been developed to reduce the computational complexity;
See also \cite{ZimmermanChiang1995, Srinivas2000} for techniques for solving the nonlinear 
equations \eqref{eq:lnDF.sol1}, 
and \cite{Kersting2002, Shirmohammadi1988} for a different recursive approach called the 
\emph{forward/backward sweep} for radial networks.

\noindent
\emph{Case II: Links point towards bus 0.}
Model \eqref{eq:mdf} reduces to:
\begin{subequations}
\!\!\!\!\!\!\!\!\!\!\!\!\!
\bq
\hat{S}_{ji} & \!\!\! = \!\!\! & \sum_{k: k\rightarrow j} \left( \hat{S}_{kj} - z_{kj} \hat \ell_{kj} \right) + s_j,
 \quad j \in N^+
\label{eq:bdf.1}
\\
\!\!\!\!\!\!\!\!\!\!\!\!\!
\hat v_k - \hat v_j &  \!\!\! = \!\!\!  & 2\, \text{Re} \left(z_{kj}^H \hat S_{kj} \right) - |z_{kj}|^2 \hat\ell_{kj},
\ k\rightarrow j \in \tilde E
\label{eq:bdf.2}
\\
\!\!\!\!\!\!\!\!\!\!\!\!\!
\hat v_k \hat\ell_{kj} &  \!\!\! = \!\!\!  & |\hat S_{kj}|^2, 
\ \ k\rightarrow j \in \tilde E
\label{eq:bdf.3}
% \\
% \!\!\!\!\!\!\!\!\!\!
% - s_0 & = & \sum_{k: 0\rightarrow k} \left( S_{k0} - z_{k0} |\hat I_{k0}|^2 \right)
% \label{eq:bdf.4}
% \\
% \!\!\!\!\!\!\!\!\!\!
% s_j & = & \hat{S}_{ji}  \ \ \text{ if $j$ is a leaf node}
% \label{eq:bdf.5}
\eq
\label{eq:bdf}
\end{subequations}
where $i$ in \eqref{eq:bdf.1} denotes the  node on the unique path between 
node 0 and node $j$.  The boundary condition is defined by
$S_{ji}=0$ in \eqref{eq:bdf.1} when $j=0$ and $S_{kj}=0, \ell_{kj}=0$ in \eqref{eq:bdf.1} 
when $j$ is a leaf node.
An advantage of this orientation is illustrated in the next subsection in
proving a simple bound on $\hat v_j$.  
The proof establishes formally that there is a bijection between the solution
set of \eqref{eq:df.a} and that of \eqref{eq:bdf}:
\bqn
- {S}_{kj} & \leftrightarrow & \hat S_{jk} - z_{jk} {\hat \ell}_{jk}
\\
{\ell}_{kj} & \leftrightarrow & \hat \ell_{jk} 
\\
{v}_j  & \leftrightarrow & \hat v_j 
\eqn

\subsection{Linear approximation and bounds}
\label{subsubsec:sb}

% A linearized model in Section \ref{subsec:la} takes the following form for radial networks
By setting $\ell_{jk}=0$ in  \eqref{eq:df.a}
we obtain a linear approximation of the the branch flow model, with 
the graph orientation where all links point away from bus $0$:
\begin{subequations}
\bq
\sum_{k: j\rightarrow k} S^{\text{lin}}_{jk} & = & S^{\text{lin}}_{ij}  + s_j, \quad j \in N^+
\label{eq:ldf.1}
\\
v^{\text{lin}}_j - v^{\text{lin}}_k & = & 2\, \text{Re} \left(z_{jk}^H S^{\text{lin}}_{jk} \right), 
	\ \ j\rightarrow k \in E
\label{eq:ldf.2}
\eq
\label{eq:ldf}
\end{subequations}
where bus $i$ in \eqref{eq:ldf.1} denotes the unique parent of bus 
$j$.   The boundary condition is: 
$S_{i0}^\text{lin} := 0$  in \eqref{eq:ldf.1} when $j=0$,
and $S_{jk}^\text{lin} =0$ in \eqref{eq:ldf.1} when $j$ is a leaf node. 
This is called the \emph{simplified DistFlow equations} in \cite{Baran1989b, Baran1989c}.
It is a good approximation of \eqref{eq:df.a} because the loss $z_{jk}\ell_{jk}$
is typically much smaller than the branch power flow $S_{jk}$.

The next result provides simple bounds on $(S, v)$ in
terms of their linear approximations $(S^{\text{lin}}, v^{\text{lin}})$.
% They can be easily derived using the recursive structure of \eqref{eq:df.a} and
% their linear approximation \eqref{eq:ldf}.
 Denote by $\mathbb T_j$ the subtree rooted at bus $j$, including $j$.
We write ``$k\in \mathbb T_j$'' to mean node $k$ of $\mathbb T_j$ and
``$(k, l)\in \mathbb T_j$'' to mean edge $(k,l)$ of $\mathbb T_j$.
Denote by $\mathbb P_k$ the set of links on the unique path from bus 0 to bus $k$.

\begin{lemma}
\label{lemma:ldf1} Fix any $v_0$ and $s \in \mathbb R^{2(n+1)}$.
Let $(S, \ell, v)$  and
$(S^{\text{lin}}, v^{\text{lin}})$ be solutions
of \eqref{eq:df.a} and \eqref{eq:ldf} respectively with the given $v_0$ and $s$. 
Then
\bee
\item[(1)] For $i\rightarrow j \in E$
	\bqn
	S^{\text{lin}}_{ij} & = & - \sum_{k \in \mathbb T_j} s_k
	\\
	S_{ij} & = & - \sum_{k\in \mathbb T_j} s_k\ + \left( z_{ij} \ell_{ij} + \sum_{(k,l)\in \mathbb T_j} z_{kl} \ell_{kl} \right)
	\eqn
\item[(2)] For $i \rightarrow j \in E$, $S_{ij} \geq S^{\text{lin}}_{ij}$ with equality if only if 
	$\ell_{ij}$ and all $\ell_{kl}$ in $\mathbb T_j$ are zero.

\item [(3)]For $j\in N^+$
	\bqn
	v^{\text{lin}}_{j} & = & v_0 \ - 
			 \sum_{(i,k) \in \mathbb P_j} 2\, \text{Re} \left( z_{ik}^H S_{ik}^{\text{lin}} \right)
	\\
	v_{j} & = & v_0 \ -  \sum_{(i,k) \in \mathbb P_j} \left( 2\, \text{Re} \left( z_{ik}^H S_{ik} \right) -
					|z_{ik}|^2 \ell_{ik} \right)
	\eqn
\item[(4)] For $j\in N^+$, $v_j \leq  v_j^{\text{lin}}$.
\eee
\end{lemma}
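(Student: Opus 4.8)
The plan is to establish the explicit representations (1) and (3) by summing the DistFlow relations over the tree, and then read off the inequalities (2) and (4) by comparing the full and linearized expressions term by term. Two standing facts are used throughout. First, each line impedance has nonnegative resistance and reactance, $z_{jk} = r_{jk} + \ii x_{jk}$ with $r_{jk}, x_{jk} \geq 0$. Second, $\ell_{jk} \geq 0$, which follows from \eqref{eq:df.3}: since $v_j > 0$ we have $\ell_{jk} = |S_{jk}|^2 / v_j \geq 0$. For part (1), I would sum the power-balance equation over the subtree $\mathbb{T}_j$. Rewriting \eqref{eq:ldf.1} as $S^{\text{lin}}_{ij} = \sum_{k: j\rightarrow k} S^{\text{lin}}_{jk} - s_j$ gives a recursion from the leaves upward; induction on $\mathbb{T}_j$ (base case: a leaf $j$ has empty outgoing sum, so $S^{\text{lin}}_{ij} = -s_j$) yields $S^{\text{lin}}_{ij} = -\sum_{k\in \mathbb{T}_j} s_k$. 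The same induction applied to $S_{ij} = \sum_{k: j\rightarrow k} S_{jk} + z_{ij}\ell_{ij} - s_j$ (from \eqref{eq:df.1}) accumulates exactly one loss term $z_{kl}\ell_{kl}$ per edge of $\mathbb{T}_j$, which is the stated formula.

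Part (2) is then immediate: subtracting the two formulas of part (1) gives $S_{ij} - S^{\text{lin}}_{ij} = z_{ij}\ell_{ij} + \sum_{(k,l)\in \mathbb{T}_j} z_{kl}\ell_{kl}$, a sum of terms $z\ell$ each having nonnegative real and imaginary parts, so $S_{ij} \geq S^{\text{lin}}_{ij}$ componentwise; since resistances are strictly positive, the real part $\sum r_{kl}\ell_{kl}$ vanishes iff every $\ell$ appearing (namely $\ell_{ij}$ and all $\ell_{kl}$ in $\mathbb{T}_j$) is zero, giving the equality condition. Part (3) follows by telescoping the voltage-drop equations along the path $\mathbb{P}_j$: summing \eqref{eq:ldf.2} (resp. \eqref{eq:df.2}) over $(i,k)\in \mathbb{P}_j$ collapses the left side to $v_0 - v_j^{\text{lin}}$ (resp. $v_0 - v_j$), using $v_0^{\text{lin}} = v_0$, and leaves precisely the stated right-hand sums.

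Part (4) is the main obstacle. Subtracting the two formulas of part (3) and substituting $S_{ik} - S^{\text{lin}}_{ik}$ from part (1) gives
\[
v_j^{\text{lin}} - v_j \;=\; \sum_{(i,k)\in \mathbb{P}_j}\left[\, |z_{ik}|^2\,\ell_{ik} \;+\; 2\sum_{(p,q)\in \mathbb{T}_k} \text{Re}\!\left( z_{ik}^H z_{pq} \right)\ell_{pq} \,\right].
\]
The difficulty is that the cross terms $\text{Re}(z_{ik}^H z_{pq})$ are not obviously signed, so neither a naive term-by-term bound nor a one-line induction on $v_i^{\text{lin}} - v_i \geq 0$ closes without further input. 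This is exactly where the nonnegative-impedance hypothesis is essential: $\text{Re}(z_{ik}^H z_{pq}) = r_{ik}r_{pq} + x_{ik}x_{pq} \geq 0$. Combined with $\ell \geq 0$, every summand above is nonnegative, so $v_j \leq v_j^{\text{lin}}$. Equivalently, one can argue by induction down the tree, since $v_j^{\text{lin}} - v_j = (v_i^{\text{lin}} - v_i) + |z_{ij}|^2\ell_{ij} + 2\sum_{(k,l)\in \mathbb{T}_j}\text{Re}(z_{ij}^H z_{kl})\,\ell_{kl}$ with each term nonnegative. I expect the only genuine subtleties to be making precise which edges $(p,q)$ lie in $\mathbb{T}_k$ as $(i,k)$ ranges over $\mathbb{P}_j$, and flagging the nonnegative resistance/reactance assumption, without which both part (4) and the componentwise inequality in part (2) can fail.
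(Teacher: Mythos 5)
Your proof is correct, but for part (4) it takes a genuinely different route from the paper. Parts (1)--(3) match the paper's treatment (which dismisses them as easily proved by recursion/telescoping, exactly as you do). For part (4), the paper explicitly remarks that a direct proof ``is not obvious'' and argues indirectly: it first proves Lemma \ref{lemma:ldf2}, the analogue for the reversed orientation \eqref{eq:bdf} in which all links point \emph{towards} bus $0$ --- there the bound $\hat v_j \le \hat v_j^{\text{lin}}$ is immediate because the loss term $-|z_{ik}|^2\hat\ell_{ik}$ and the flow overestimate $\hat S_{ik}\le \hat S^{\text{lin}}_{ik}$ push $\hat v_j$ in the \emph{same} direction --- and then constructs an explicit invertible linear map $(S,\ell,v)\leftrightarrow(\hat S,\hat\ell,\hat v)$ between the solution sets of \eqref{eq:df.a} and \eqref{eq:bdf} that preserves voltages ($v=\hat v$, and likewise $v^{\text{lin}}=\hat v^{\text{lin}}$), so that Lemma \ref{lemma:ldf2}(4) transfers to Lemma \ref{lemma:ldf1}(4). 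You instead carry out the direct computation the paper sidesteps: substituting part (1) into part (3) gives
\[
v_j^{\text{lin}} - v_j \;=\; \sum_{(i,k)\in \mathbb{P}_j}\Bigl[\, |z_{ik}|^2\,\ell_{ik} \;+\; 2\!\!\sum_{(p,q)\in \mathbb{T}_k}\!\! \bigl(r_{ik}r_{pq}+x_{ik}x_{pq}\bigr)\ell_{pq} \Bigr] \;\ge\; 0,
\]
the key observation being that the cross terms, which are precisely what makes the direct route ``not obvious,'' are nonnegative once one assumes $r_{jk},x_{jk}\ge 0$. It is worth noting that both proofs need this impedance sign hypothesis, even though the paper never states it: the paper's ``easy'' recursion for Lemma \ref{lemma:ldf2}(4) uses $\mathrm{Re}(z_{ik}^H\hat S_{ik})\le \mathrm{Re}(z_{ik}^H\hat S^{\text{lin}}_{ik})$, which follows from $\hat S_{ik}\le\hat S^{\text{lin}}_{ik}$ only under the same sign condition (and the conclusion genuinely fails with, e.g., series capacitive compensation $x_{jk}<0$, in part (2) as well as part (4)). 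Your version buys a shorter, self-contained argument with an explicit nonnegative formula for the gap $v^{\text{lin}}_j-v_j$, and it surfaces the hidden assumption; the paper's version avoids the cross-term bookkeeping and additionally establishes the orientation-reversal bijection, which it wants anyway since that correspondence is stated in Section \ref{sec:BFMt} and has independent use.
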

Lemma \ref{lemma:ldf1} says that the power flow $S_{ij}$ on line $(i,j)$ equals the total load 
$- \sum_{k \in \mathbb T_j} s_k$ in the subtree rooted at node $j$ plus the total line loss
in supplying these loads.  
The linear approximation $S_{ij}^{\text{lin}}$ neglects the line losses and underestimates 
the required power to supply these loads.

Lemma \ref{lemma:ldf1}(1)--(3) can be easily proved by recursing on 
\eqref{eq:df.1}--\eqref{eq:df.2} and \eqref{eq:ldf}.
Since $S_{ij} \geq S^{\text{lin}}_{ij}$ but $|z_{ik}|^2 \ell_{ik} \geq 0$,
a direct proof of Lemma \ref{lemma:ldf1}(4) is not obvious.
Instead, one can make use of Lemma \ref{lemma:ldf2} below and 
define a bijection between the solutions
$(S, \ell, v)$ of \eqref{eq:df.a} and the solutions
$(\hat S, \hat \ell, \hat v)$ of \eqref{eq:bdf} in which $v = \hat v$.
It can be checked that the solutions of \eqref{eq:ldf} and those of \eqref{eq:lbdf} are related
by $S^\text{lin} = - \hat S^\text{lin}$ and $v^\text{lin} = \hat v^\text{lin}$.
Then Lemma \ref{lemma:ldf2}(4)  implies Lemma \ref{lemma:ldf1}(4).

A linear approximation of \eqref{eq:bdf} is (setting $\hat\ell_{kj}=0$):
\begin{subequations}
\bq
\hat{S}_{ji}^{\text{lin}} & = & \sum_{k: k\rightarrow j}  \hat{S}_{kj}^{\text{lin}}  + s_j, \quad j \in N^+
\label{eq:lbdf.1}
\\
\hat v_k^{\text{lin}}  - \hat v_j^{\text{lin}}  & = & 
		2\, \text{Re}\! \left(z_{kj}^H \hat S_{kj}^{\text{lin}}  \right), \  k\rightarrow j \in \tilde E
\label{eq:lbdf.2}
\eq
\label{eq:lbdf}
\end{subequations}
\begin{lemma}
\label{lemma:ldf2} Fix any  $v_0$ and $s \in \mathbb R^{2(n+1)}$. 
Let $(\hat S, \hat \ell, \hat v)$ and $(\hat S^{\text{lin}},  \hat v^{\text{lin}})$ 
be solutions of \eqref{eq:bdf} and \eqref{eq:lbdf} respectively with
the given $v_0$ and $s$.   Then
\bee
\item[(1)] For all $j\rightarrow i \in E$
	\bqn
	\hat S^{\text{lin}}_{ji} & = & \sum_{k \in \mathbb T_j} s_k
	\\
	\hat S_{ji} & = &  \sum_{k\in \mathbb T_j} s_k\  - \sum_{(k,l)\in \mathbb T_j} z_{kl} \hat \ell_{kl} 
	\eqn
\item[(2)] For all $j \rightarrow i \in E$, $\hat S_{ji} \leq \hat S^{\text{lin}}_{ji}$ with equality if and only if all $\ell_{kl}$ 
	 in $\mathbb T_j$ are zero.
	 
\item[(3)] For $j\in N^+$
	\bqn
	\hat v^{\text{lin}}_{j} & = & v_0 \ +  \sum_{(i,k) \in \mathbb P_j} 
					2 \, \text{Re} \left( z_{ik}^H \hat S_{ik}^{\text{lin}} \right)
	\\
	\hat v_{j} & = & v_0 \ +  \sum_{(i,k) \in \mathbb P_j} \left( 2 \, \text{Re} \left( z_{ik}^H \hat S_{ik} \right)  
					- |z_{ik}|^2 \hat \ell_{ik} \right)
	\eqn

\item[(4)] For $j\in N^+$, $\hat v_j \leq \hat v_j^{\text{lin}}$.
\eee
\end{lemma}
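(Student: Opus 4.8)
The plan is to prove (1) and (3) by structural induction and telescoping on the tree, and then to read off (2) and (4) as sign statements about the remainder terms that appear. Throughout I use the fact that the orientation points every link \emph{towards} bus $0$, so that for a node $j$ the links $k\rightarrow j$ are exactly those from the children of $j$; consequently the subtree decomposes as a disjoint union $\mathbb T_j=\{j\}\cup\bigcup_{k:k\rightarrow j}\mathbb T_k$, and the edge set of $\mathbb T_j$ is the union of the edge sets of the $\mathbb T_k$ together with the links $(k,j)$.

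For (1) I would induct from the leaves up. At a leaf $j$ there is no $k\rightarrow j$, so \eqref{eq:lbdf.1} gives $\hat S^{\text{lin}}_{ji}=s_j$ and \eqref{eq:bdf.1} gives $\hat S_{ji}=s_j$, matching the claimed formulas since $\mathbb T_j=\{j\}$ has no edges. For the inductive step I substitute the formulas for the children $k$ into the right-hand sides of \eqref{eq:lbdf.1} and \eqref{eq:bdf.1}: the nodal injection sums collapse to $\sum_{k\in\mathbb T_j}s_k$ by the vertex decomposition above, and in the nonlinear case the extra loss terms $-z_{kj}\hat\ell_{kj}$ contributed at each child combine with the losses already accumulated inside each $\mathbb T_k$ to give exactly $-\sum_{(k,l)\in\mathbb T_j}z_{kl}\hat\ell_{kl}$ by the edge decomposition. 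Part (3) is the dual telescoping statement: walking the unique path $\mathbb P_j$ from bus $0$ to $j$ and summing \eqref{eq:bdf.2} (respectively \eqref{eq:lbdf.2}) link by link makes the intermediate voltages cancel in consecutive pairs, leaving $\hat v_j-v_0$ equal to the stated path sums. Here I only need to keep the orientation of each path link straight so that the coefficients come out as $z_{ik}^H\hat S_{ik}$ rather than its conjugate.

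Part (2) is then immediate from (1): the discrepancy is $\hat S^{\text{lin}}_{ji}-\hat S_{ji}=\sum_{(k,l)\in\mathbb T_j}z_{kl}\hat\ell_{kl}$. Since \eqref{eq:bdf.3} forces $\hat\ell_{kl}=|\hat S_{kl}|^2/\hat v_k\ge 0$ (using $\hat v_k>0$, which follows from the standing assumption $\underline v_j>0$) and each impedance satisfies $\text{Re}\,z_{kl}\ge 0$ and $\text{Im}\,z_{kl}\ge 0$, every summand is nonnegative in the componentwise order, giving $\hat S_{ji}\le\hat S^{\text{lin}}_{ji}$, with equality exactly when all $\hat\ell_{kl}$ in $\mathbb T_j$ vanish.

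The crux is part (4), and it is precisely here that the ``towards bus $0$'' orientation does the work. Subtracting the two identities in (3) gives $\hat v^{\text{lin}}_j-\hat v_j=\sum_{(i,k)\in\mathbb P_j}\big(2\,\text{Re}(z_{ik}^H(\hat S^{\text{lin}}_{ik}-\hat S_{ik}))+|z_{ik}|^2\hat\ell_{ik}\big)$. Writing $z=r+\ii x$ and $\hat S^{\text{lin}}-\hat S=\Delta P+\ii\Delta Q$, one computes $\text{Re}(z^H(\hat S^{\text{lin}}-\hat S))=r\,\Delta P+x\,\Delta Q$; by (2) both $\Delta P\ge 0$ and $\Delta Q\ge 0$, while $r,x\ge 0$, and the loss term $|z_{ik}|^2\hat\ell_{ik}$ is also nonnegative. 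Hence every term in the path sum is nonnegative and $\hat v_j\le\hat v^{\text{lin}}_j$. I expect this sign alignment to be the only genuine obstacle: in this orientation the line loss both reduces the downstream power flow \emph{and} enters the voltage recursion through $+|z|^2\hat\ell$, so the two effects reinforce and push $\hat v$ uniformly below $\hat v^{\text{lin}}$. Under the opposite orientation of Lemma \ref{lemma:ldf1} they instead compete (losses raise the power flow but the $-|z|^2\ell$ term lowers the voltage), which is exactly why Lemma \ref{lemma:ldf1}(4) is deduced from this lemma through the stated bijection rather than proved directly.
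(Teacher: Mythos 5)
Your proof is correct and follows exactly the route the paper takes: the paper omits the proof of this lemma as ``obvious,'' saying only that it follows by recursing on \eqref{eq:bdf.1}--\eqref{eq:bdf.2} and \eqref{eq:lbdf}, and your leaf-to-root induction for (1), path telescoping for (3), and sign arguments reading off (2) and (4) are precisely that recursion carried out. The only addition is that you make explicit the assumptions $\text{Re}\,z_{kl}\geq 0$, $\text{Im}\,z_{kl}\geq 0$ and $\hat\ell_{kl}\geq 0$ that the paper leaves implicit (cf.\ Remark \ref{remark:bnd}), which the componentwise complex order in (2) and (4) indeed requires.
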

Lemma \ref{lemma:ldf2} says that the branch power $\hat S_{ji}$ (towards bus 0) equals  the total
power injection $\sum_{k \in \mathbb T_j} s_k$ in the subtree rooted at bus $j$ minus the line losses 
in that subtree.  The linear approximation $\hat S_{ji}^{\text{lin}}$ neglects the line losses and hence
overestimates the branch power flow.
Lemma \ref{lemma:ldf2} can be easily proved by recursing on 
\eqref{eq:bdf.1}--\eqref{eq:bdf.2} and \eqref{eq:lbdf}.

%%%%%%%%%%%%%%%%%%%%%%%%%%%%%%%%%%%%%%%%%%%%%%

\begin{remark}
\label{remark:bnd}
\emph{Bounds for SOCP relaxation.}
Lemmas \ref{lemma:ldf1} and \ref{lemma:ldf2} do not depend on 
the quadratic equalities \eqref{eq:df.3} and \eqref{eq:bdf.3} as long as $\ell_{jk}\geq 0$.
In particular the lemmas hold if the equalities have been relaxed to inequalities
$v_j \ell_{jk}  \geq  |S_{jk}|^2$.
These bounds are used in \cite{Gan-2014-BFMt-TAC} to prove a sufficient condition for exact 
SOCP relaxation for radial networks.
\end{remark}

\begin{remark}
\label{remark:LDFvsDC}
\emph{Linear approximations.}
For radial networks the linear approximations \eqref{eq:ldf} and \eqref{eq:lbdf} of BFM have two advantages over the (linear) DC approximation of BIM.
First they have a simple recursive structure that leads to simple
bounds on power flow quantities.
Second DC approximation assumes $r_{jk}=0$, fixes voltage
magnitudes, and ignores reactive power, whereas \eqref{eq:ldf} and \eqref{eq:lbdf} 
do not.  This is important for distribution systems where $r_{jk}$ are not negligible, 
voltages can fluctuate significantly and reactive powers are used to regulate them.
On the other hand  \eqref{eq:ldf} and \eqref{eq:lbdf} are applicable only for radial networks
whereas DC approximation applies to mesh networks as well.
See also \cite{Coffrin2012} for a more accurate linearization of BIM that
addresses the shortcomings of DC OPF.
\end{remark}

\section{Conclusion}
\label{sec:conc1}

We have presented a bus injection model and a branch flow model, 
formulated several relaxations of OPF, and proved
their relations.   These results suggest a new approach to
solving OPF summarized in Figure \ref{fig:SolStrat}.
%%%%%%%%%%%%%%%%
\begin{figure}[htbp]
\centering
\includegraphics[width=0.6\textwidth]{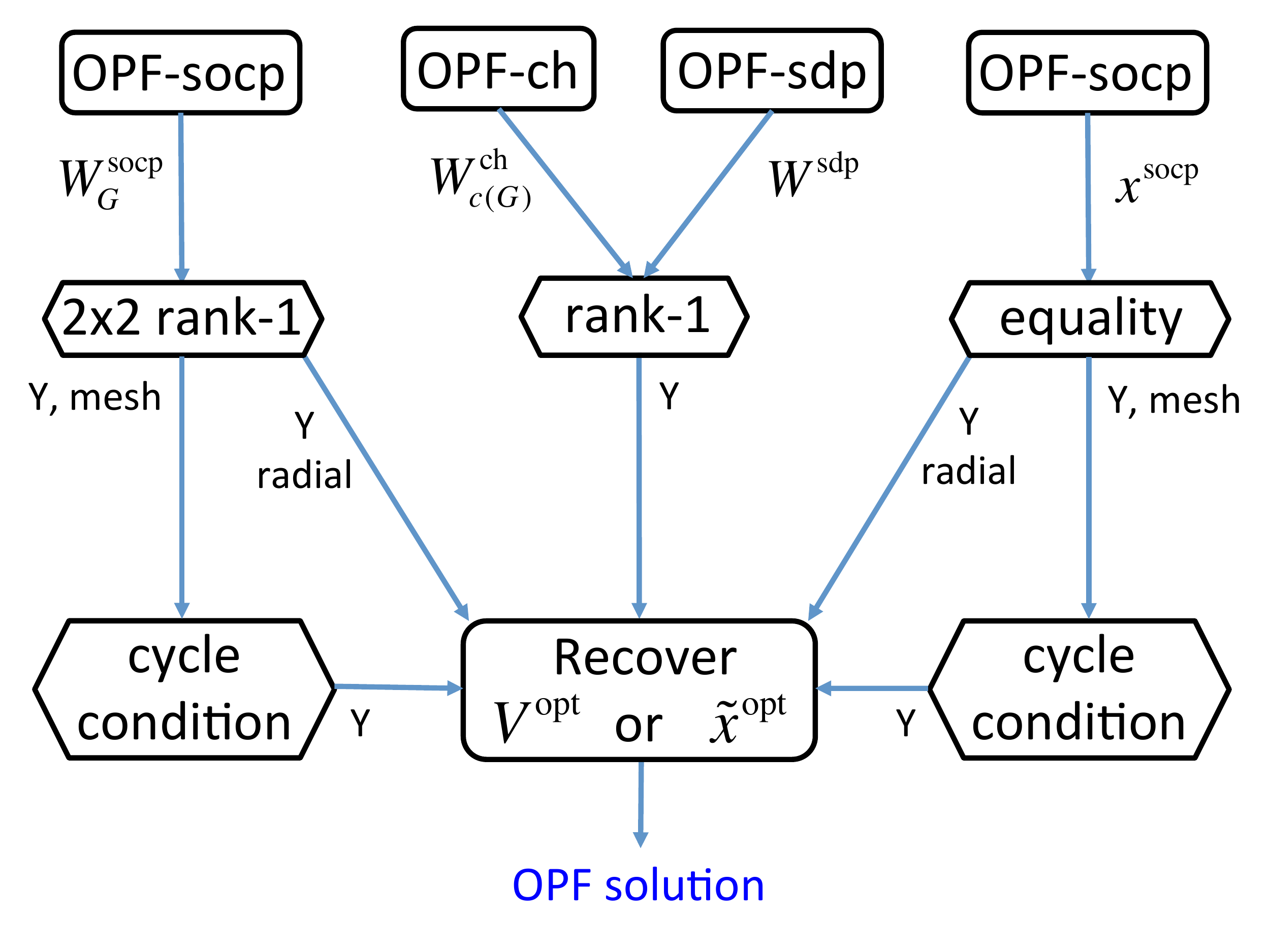}
\caption{Solving OPF through semidefinite relaxations.
}
\label{fig:SolStrat}
\end{figure}
%%%%%%%%%%%%%%%%
For radial networks we recommend solving OPF-socp in either BIM or BFM
though there is preliminary evidence that BFM can be more stable
numerically.  For mesh networks we recommend solving OPF-ch for
small networks and OPF-socp followed by a heuristic search
for a feasible point for large networks.  Also see Remarks \ref{remark:socp1}
and \ref{remark:socp2}.

The key for this solution strategy is that the relaxations are exact
so that an optimal solution of the original OPF can be recovered.  
In Part II of this paper \cite{Low2014b} we summarize sufficient conditions that guarantee
exact relaxation.

\newpage

\appendix[VIII: Mathematical preliminaries]
\label{app:prelims}

% \section{Appendix: mathematical preliminaries}

In this appendix we summarize  some basic concepts  in  optimization, 
matrix completion and chordal relaxation that we use in this two-part tutorial.
For notations see Section \ref{sec:intro1}.
More details can be found in, e.g., \cite{Boyd2004, wolkowicz00, Lobo1998,
Fukuda99exploitingsparsity, Nakata2003, zhang00, KimKojima2003, Grone1984}.

\subsection{QCQP, SDP, SOCP}
\label{subsec:opt}

Quadratic constrained quadratic program (QCQP) is the following problem:
\begin{subequations}
\bq
\min_{x\in \mathbb C^n} & & x^H C_0 x
\label{eq:defqcqp.1}
\\
\text{subject to}  & & x^H C_l x \leq b_l, \ \  l = 1, \dots, L
\label{eq:defqcqp.2}
\eq
\label{eq:defqcqp}
\end{subequations}
where $x\in \mathbb C^n$, for $l=0, \dots, L$, $C_l \in \mathbb S^n$ (so that $x^H C_l x$ 
are real), and $b_l \in \mathbb R$ are given.  
If $C_l$, $l = 0, \dots, L$, are positive semidefinite then \eqref{eq:defqcqp} is
a convex QCQP.  Otherwise it is generally nonconvex.

Any psd rank-1 matrix $X$ has a unique spectral decomposition $X = xx^H$.
Using $x^H C_l x = \text{tr } C_l xx^H =: \text{tr } C_l X$ we can  
rewrite a QCQP as the following equivalent problem where the optimization
is over Hermitian matrices:
\begin{subequations}
\bq
\min_{X \in \mathbb S^n} & & \text{tr } C_0 X
\label{eq:defqcqp.3}
\\
\text{subject to}  & &  \text{tr } C_l X \leq b_l, \ \  l = 1, \dots, L
\label{eq:defqcqp.4}
\\
	& & X \succeq 0, \ \  \text{rank } X = 1
\label{eq:defqcqp.5}
\eq
\label{eq:defqcqp.b}
\end{subequations}
While the objective function and the constraints in \eqref{eq:defqcqp} is quadratic
in $x$ they are linear in $X$ in \eqref{eq:defqcqp.3}--\eqref{eq:defqcqp.4}.  
The constraint $X \succeq 0$ in \eqref{eq:defqcqp.5} is convex 
($\mathbb S^n_+$ is a convex cone).  
The rank constraint in \eqref{eq:defqcqp.5} is the only nonconvex constraint.
Removing the rank constraint results in a semidefinite program (SDP):
\begin{subequations}
\bq
\min_{X \in \mathbb S^n} & & \text{tr } C_0 X
\label{eq:defsdp.1}
\\
\text{subject to}  & &  \text{tr } C_l X \leq b_l, \ \  l = 1, \dots, L
\label{eq:defsdp.2}
\\
	& & X \succeq 0
\label{eq:defsdp.3}
\eq
\label{eq:defsdp}
\end{subequations}
SDP is a convex program and can be efficiently computed.   
We call \eqref{eq:defsdp} an \emph{SDP relaxation}
of QCQP \eqref{eq:defqcqp} because the feasible set of \eqref{eq:defqcqp.b}
 is a subset of the feasible set of SDP \eqref{eq:defsdp}.
A  strategy for solving  QCQP \eqref{eq:defqcqp}
is  to solve SDP \eqref{eq:defsdp} for an optimal $X^\text{opt}$ and check its rank.
If rank $X^\text{opt} = 1$ then $X^\text{opt}$ is optimal for \eqref{eq:defqcqp.b} 
as well and an optimal solution $x^\text{opt}$ of QCQP \eqref{eq:defqcqp} can be 
recovered from $X^\text{opt}$ through spectral decomposition 
$X^\text{opt} = x^\text{opt} (x^\text{opt})^H$.  
If rank $X^\text{opt} > 1$ then, in general, no feasible solution of QCQP can be 
directly obtained from $X^\text{opt}$
but the optimal objective value of SDP provides a lower bound on that of QCQP.

To derive the Lagrangian dual of SDP \eqref{eq:defsdp}, form the Lagrangian,
for $y := (y_l, l=1, \dots, L) \geq 0$,
\bqn
L(X; y) & := & \text{tr}\, C_0 X \ + \ \sum_{l=1}^L y_l \left( \text{tr}\, C_l X - b_l \right)
 \ \, = \ \, \text{tr}\, \left( C_0 + \sum_l y_l C_l \right)\!\! X \ - \ b^T y
\eqn 
Then the primal problem \eqref{eq:defsdp} is equivalent to
$\min_{X \succeq 0} \, \max_{y \geq 0}\ L(X; y)$ and its dual is
$\max_{y \geq 0}\, \min_{X \succeq 0} \ L(X; y)$ (if we allow their objective values to be
$\pm\infty$).
Hence the dual objective function is
\bqn
\min_{X \succeq 0} \ L(X; y) & = & \left\{\begin{array}{lcl}
			- b^T y & & \text{if } C_0 + \sum_l y_l C_l \, \preceq \, 0
			\\
			-\infty & & \text{otherwise}
			\end{array}  \right.
\eqn
Hence the dual problem is:
\bqn
\min_{y \geq 0} \ b^T y
& \text{ subject to }  &  C_0 + \sum_{l=1}^L y_l C_l \, \preceq \, 0
\eqn
A pair $(X^\text{opt}, y^\text{opt})$ is a primal-dual optimal if and only if
\bee
\item \emph{Primal feasibility:} $X^\text{opt}\succeq 0$ and
	tr$\, C_l X^\text{opt} \, \leq \, b_l$, $l = 1, \dots, L$.
\item \emph{Dual feasibility:} $y^\text{opt}\geq 0$ and
	$C_0 \, + \, \sum_{l=1}^L \, y_l^\text{opt} \, C_l \ \preceq \ 0$.
\item \emph{Complementary slackness:} 
	tr$\, \left( C_0 \, + \, \sum_l \, y_l^\text{opt}\, C_l \right) X^\text{opt} \ = \ 0$.
\eee
\vspace{0.1in}

A special case of SDP is a second-order cone program (SOCP):
\begin{subequations}
\bq
\min_{x\in \mathbb C^n} & & c_0^H x
\label{eq:defsocp.a1}
\\
\text{subject to} & & \| C_l x + b_l \| \leq c_l^H x + d_l, \ \ l = 1, \dots, L
\label{eq:defsocp.a2}
\eq
\label{eq:defsocp.a}
\end{subequations}
where $c_0 \in \mathbb C^n$ defines the cost
and, for $l =1, \dots, L$, $C_l\in \mathbb C^{(n_l - 1) \times n}$, 
$b_l \in \mathbb C^{n_l - 1}$, $c_l\in \mathbb C^n$, and $d_l \in \mathbb R$
are given.   Here $c_l$, $l = 0, \dots, L$, are such that $c_l^H x$ are real
and $\|\cdot\|$ is the Euclidean norm, $\|u\| := \sqrt{u^H u}$.
The feasible set defined by \eqref{eq:defsocp.a2} is called a second-order cone
and is a convex set.
SOCP includes linear program and convex QCQP
 as special cases \cite{Lobo1998}.  Even though an SOCP can be formulated
as a standard SDP, solving an SOCP via SDP is generally much less efficient.
The number of iterations to reduce the duality gap to a constant fraction of itself
is bounded above by $O(\sqrt{L})$ for SOCP and by $O(\sqrt{\sum_l n_l})$
for SDP \cite{Lobo1998}.  Moreover each iteration is much faster for SCOP than
for SDP.    

For optimal power flow problems, we use SOCP in the following rotated form:
\bqn
\min_{x\in \mathbb C^n} & & c_0^H x
\\
\text{subject to} & & \| C_l x + b_l \|^2 \ \leq \ (c_l^H x + d_l)(\hat{c}_l^H x + \hat{d}_l),
			\qquad  l = 1, \dots, L
\eqn
This can be converted to the standard form  \eqref{eq:defsocp.a} via
the transformation: for any complex vector $u \in \mathbb C^l$, any real numbers
$a, b\in \mathbb R$, 
\bqn
\|u\|^2 \leq ab, \ a\geq 0, \ b\geq 0 & \Leftrightarrow & 
\left\| \begin{bmatrix}   2u \\ a-b   \end{bmatrix} \right\| \ \leq \ a+b
\eqn

In this paper we formulate optimal power flow (OPF) problems as
QCQPs and describe SDP and SOCP relaxations of OPF.  The third relaxation
we will discuss is chordal relaxation based on the notion of chordal extension
of a network graph.   We now review some basic concepts in graph theory, 
partial matrices and completions, and show that a chordal 
relaxation is indeed a semidefinite program.

\subsection{Graph, partial matrix and completion}
\label{subsec:pmc}

Consider a graph $G = (N, E)$ with $N := \{1, \dots, n\}$.  $G$ can either be
undirected or directed with an arbitrary orientation.
Two nodes $j$ and $k$ are \emph{adjacent} if $j\sim k \in E$.
A \emph{complete} graph is one where every pair of nodes is adjacent.
A subgraph of $G$ is a graph $F = (N', E')$ with $N'\subseteq N$ and $E' \subseteq E$.  
A \emph{clique} of $G$ is a complete  subgraph of $G$. A 
\emph{maximal clique} of $G$ is a clique that is not a subgraph of another 
clique of $G$. 

By a \emph{path} connecting nodes $j$ and
$k$ we mean either a set of \emph{distinct} nodes $(j, n_1, \dots, n_i, k)$ such that 
$(j \sim n_1), (n_1 \sim n_2), \dots, (n_i \sim k)$
are edges in $E$ or this set of edges, depending on the context.
A \emph{cycle} $(n_1, \dots, n_i)$ is a path such that 
$(n_1 \sim n_2), \dots, (n_i \sim n_1)$ are edges in $E$. 
By convention we exclude a pair of adjacent nodes $(j,k)$ as a cycle.
We will only consider connected graphs in which there is a path between
every pair of nodes.

A cycle in $G$ that has no chord (an edge connecting two nodes that 
are non-adjacent in the cycle) is called a \emph{minimal cycle}.
$G$ is  \emph{chordal} if all its minimal cycles are of length 3 (recall that
an edge $(j,k)$ is not considered a cycle).
A \emph{chordal extension} of  $G$ is a chordal graph 
on the same set of nodes as $G$ that contains $G$ as a subgraph. 
Every graph has a chordal extension; e.g. the complete graph on the
same set of nodes is a trivial chordal extension.

Fix a graph $G = (N, E)$ with $N:=\{1, \dots, n\}$ and $E \subseteq N\times N$.   
For our purposes here we assume $G$ is undirected
so that $(j,k) \in E$ if and only if $(k,j)\in E$.
A \emph{$G$-partial matrix} (or simply a \emph{partial
matrix} if $G$ is clear from the context) is a set of complex numbers:
\bqn
X_G \ := \  \left( [X_G]_{jj} \in \mathbb C, j\in N,\ [X_G]_{jk} \in \mathbb C, (j,k)\in E \right)
\eqn 
One can treat a partial matrix $X_G$ as entries of an $n \times n$ matrix $X$ 
whose entries $X_{jk}$ are unspecified if $(j,k)\not\in E$.  
See Figure \ref{fig:ChordExt}(a) below for an example.
Given a partial matrix $X_G$ we call an $n\times n$ matrix
$X$ a \emph{completion of $X_G$} if $X_{jj} = [X_G]_{jj}, j\in N$, and 
$X_{jk} = [X_G]_{jk}, (j,k)\in E$, i.e., $X$ \emph{agrees} with $X_G$ on $G$.\footnote{We 
abuse the $X_G$ notation: given $G$, $X_G$ is a partial matrix
defined on $G$, and given an $n\times n$ matrix $X$, $X_G$ is the submatrix
$(X_{jj}, j\in N, X_{jk}, (j,k)\in E)$ of $X$ defined by $G$.  
The meaning should be clear from the context.}

Consider any $n\times n$ matrix $X$.
Given any $k \leq n$ nodes $(n_1, n_2, \ldots, n_k)$ let $X(n_1, \ldots, n_k)$
denote the $k\times k$ principal submatrix of $X$ defined by:
\bqn
[X(n_1, \ldots, n_k)]_{ij} & := & X_{ij}, \ \ \ i, j \in \{n_1, \ldots, n_k\}
\eqn
Any maximal clique $q := (n_1, n_2, \ldots, n_k)$ of $G$ with $k$ nodes 
defines a (fully specified) $k\times k$ principal submatrix denoted by 
$X(q) := X(n_1, \ldots, n_k)$. 
In particular each edge $(i,j)\in E$ is a clique and defines a $2\times 2$ 
principal submatrix $X(i,j)$, which we use heavily in discussing optimal
power flow problems.  These notions are extended to partial matrices with 
$X$ replaced by $X_G$.

We extend the notions of Hermitian, psd,  rank-1, and trace to partial matrices as follows.
We say that a partial matrix $X_G$ is \emph{Hermitian}, denoted by $X_G = X_G^H$, if 
$[X_G]_{kj} = \left( [X_G]_{jk} \right)^H$.
An $n\times n$  matrix $X$ is psd if and only if all its principal submatrices (including
$X$ itself) is psd.  We extend the notion of psd to partial matrices using this
property, by saying that a partial matrix $X_G$ is psd if all its ``principal 
submatrices'' that are fully specified are psd.  Formally $X_G$ is psd, denoted by
$X_G \succeq 0$, if $X_G(q) \succeq 0$ for all maximal cliques $q$ of $G$.
Note that if $X_G$ is psd then it is Hermitian by definition.
Similarly we say that a partial matrix $X_G$ is \emph{rank-1}, 
denoted by rank $X_G = 1$, if $X_G(q)$ is rank-1 for all maximal cliques $q$ of $G$.
We say $W_G$ is \emph{$2\times 2$ psd} on $G$ if, for all $(j,k)\in E$,
the $2\times 2$ matrices $W_G(j,k)$ are psd, i.e., 
\bqn
[W_G]_{jj} \geq 0, \ \ [W_G]_{kk}\geq 0, \quad  
[W_G]_{jj}\, [W_G]_{kk} \ \geq \ \left|[W_G]_{jk}\right|^2
\eqn
We say $W_G$ is \emph{$2\times 2$ rank-1} on $G$ if, for all $(j,k)\in E$,
$W_G(j,k)$ are $2\times 2$ rank-1 matrices, i.e., they are not the zero matrices and 
\bqn
[W_G]_{jj}\, [W_G]_{kk} & = & \left| [W_G]_{jk} \right|^2
\eqn
Finally we say that an $n\times n$ matrix $C$ is \emph{defined on graph $G$}
if $C_{jk}=0$ if $(j,k)\not\in E$.
We extend the operation tr to partial matrices $X_G$: 
if $C$ and $X_G$ are defined on the same graph $G$ then
\bqn
\text{tr}\ C X_G & = & \sum_{j\in N} C_{jj} \, [X_G]_{jj} \ + \sum_{(j,k)\in E} C_{jk}\, [X_G]_{jk}
\eqn

Suppose the matrices $C_l$ in \eqref{eq:defsdp}, 
$l= 0,\dots, L$, are all defined on $G$, i.e., for all $l$, $[C_l]_{jk} = 0$ if 
$(j,k)\not\in E$.
Then given any $n\times n$ matrix $X$, tr $C_l X =$ tr $C_l X_G$ where
$X_G$ is the submatrix of $X$ defined by $G$.
Conversely, given a partial matrix $X_G$ that satisfies \eqref{eq:defsdp.2}, 
{\em any} completion $X$ of $X_G$ satisfies \eqref{eq:defsdp.2}.
Even though both the objective function \eqref{eq:defsdp.1} and the constraints
\eqref{eq:defsdp.2} depend only on the partial matrix $X_G$, the constraint 
$X\succeq 0$ in \eqref{eq:defsdp.3} depends also on entries not in $X_G$.
Indeed the number of complex variables in $X$ is $n^2$
while the number of complex variables in $X_G$ is only $n + 2|E|$, which is
much smaller than $n^2$ if $G$ is large but sparse.
Hence instead of solving for a full psd matrix $X$ directly as in SDP 
\eqref{eq:defsdp} we would like to compute a partial matrix $X_G$ that has 
a psd completion $X$ that satisfies \eqref{eq:defsdp.2}--\eqref{eq:defsdp.3}.
If the completion $X$ is rank-1 then it also solves the problem 
\eqref{eq:defqcqp.b} and hence yields a solution to
the original QCQP \eqref{eq:defqcqp} through spectral decomposition of $X$.
Theorem \ref{thm:rank1} provides an exact characterization of when this is
possible.

To solve the QCQP \eqref{eq:defqcqp}, Theorem \ref{thm:rank1} suggests the following
strategy that exploits the sparsity of graph $G$: instead of solving SDP
\eqref{eq:defsdp} for a psd matrix $X^\text{opt}\in \mathbb S^n_+$, 
solve for a psd partial matrix $X_F^\text{opt}$ defined on a chordal extension $F$ of $G$.
If the solution $X_F^\text{opt}$ turns out to be rank-1 as well then an optimal solution 
$x^\text{opt}$ of QCQP \eqref{eq:defqcqp} can be recovered from $X_F^\text{opt}$
(see Section \ref{subsec:sr}).

Two questions naturally arise in this approach: 
(i) How to formulate a 
semidefinite relaxation based on a given a chordal extension $F$ of $G$?
 (ii) How to choose a good chordal extension $F$ of $G$ so that the
	resulting relaxation can be solved efficiently?
We next illustrate the issues involved in these two questions through an
example.  See \cite{Fukuda99exploitingsparsity, Nakata2003} for more
details.

\subsection{Chordal relaxation}
\label{subsec:cr}

Fix a graph $G = (N, E)$.
Let $F = (N, E')$ be a chordal extension of $G$ with
 $E' \supseteq E$.   Let $q_1, \dots, q_K$ be the set of maximal 
cliques of $F$ and $X(q_k), k = 1,\dots, K$, be the set of principal
submatrices of $X$ defined on these cliques.  
 Consider the following problem 
where the optimization variable is the Hermitian partial matrix $W_F \in \mathbb C^{n+2|E'|}$ defined on the chordal extension $F$:
\begin{subequations}
\bq
\min_{X_F = X_F^H} & & \text{tr } C_0 X_G
\label{eq:defcr.1}
\\
\text{subject to}  & &  \text{tr } C_l X_G \leq b_l, \ \  l = 1, \dots, L
\label{eq:defcr.2}
\\
& & 		X_F(q_k) \succeq 0, \ \ k = 1, \dots, K
\label{eq:defcr.3}
\eq
\label{eq:defcr}
\end{subequations}
We call  this problem a \emph{chordal relaxation} of QCQP \eqref{eq:defqcqp}.
Recall that we assume $C_l$, $l = 0, \dots, L$, are all defined on $G$, i.e.,
$[C_l]_{jk} = 0$ if $(j,k)\not\in E$.
This implies that tr$\, C_l X = \ $tr$\, C_l X_F = \ $tr$\, C_l X_G$.
Then chordal relaxation \eqref{eq:defcr} is equivalent to SDP \eqref{eq:defsdp} 
in the sense that given 
any feasible solution $X_F$ of \eqref{eq:defcr}, there is a psd completion $X$ 
that is feasible for \eqref{eq:defsdp} and has the same cost, and vice versa.
This is a consequence of \cite[Theorem 7]{Grone1984} that says every psd partial 
matrix has a psd completion if and only if the underlying graph is chordal.
See also Theorem \ref{thm: bimR} and Corollary \ref{coro:bimR}.

The first step in constructing the chordal relaxation \eqref{eq:defcr} is to list
all the maximal cliques $q_k$.  Even though listing all maximal cliques of a
general graph is NP-hard it can be done efficiently for a chordal graph.
This is because a graph is chordal if and only if it has a perfect elimination
ordering \cite{Fulkerson1965} and computing this ordering takes linear
time in the number of nodes and edges \cite{Rose1976}.  Given a perfect
elimination ordering  all maximal cliques $q_k$ can be enumerated and
$X_F(q_k)$ constructed efficiently \cite{Fukuda99exploitingsparsity}.
For optimal power flow problems the computation depends only on the
topology of the power network, not on operational data, and therefore can 
be done offline.

We now show that  \eqref{eq:defcr} is 
indeed an SDP by converting it into the standard form \eqref{eq:defsdp} 
with the introduction of auxiliary variables, following the procedure described
in \cite{Fukuda99exploitingsparsity}.
This conversion also illustrates the difficulty in choosing a good 
chordal extension $F$ (see Remark \ref{remark:cr} below).

The (fully specified) matrices $X_F(q_k)$ in \eqref{eq:defcr.3} can be treated as
principal submatrices of an $n\times n$ matrix $X$.  
They may not however be integrated directly into a common $n\times n$ matrix variable
$X$ because different $X_F(q_k)$ may share entries.   We now explain the issue and
its resolution using the example in Figure 1.  They are the same in the  general case
with more cumbersome notations; see
\cite{Fukuda99exploitingsparsity, Nakata2003}.
%%%%%%%%%%%%%%%%
\begin{figure*}[htbp]
\centering
\includegraphics[width=0.90\textwidth]{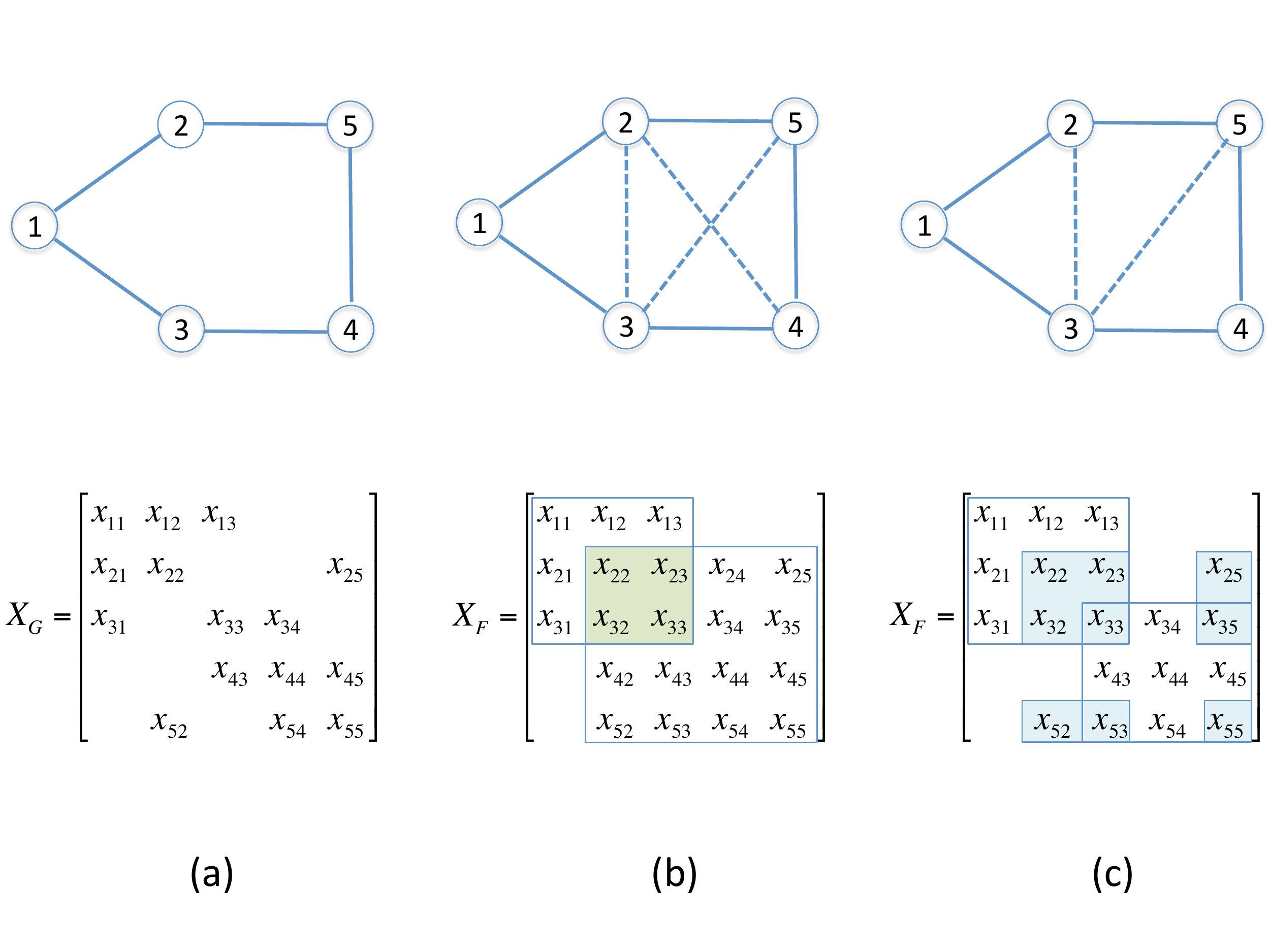}
\caption{Chordal extensions of $G$.  (a) Graph $G$ and the partial matrix $X_G$.
(b) A chordal extension $F$ and its $X_F$ that have 2 maximal cliques,
$q_1 := (1, 2, 3)$ and $q_2 := (2, 3, 4, 5)$.
These cliques share two nodes, 2 and 3.  The corresponding $X_F(q_1)$
and $X_F(q_2)$ are outlined in $X_F$ with the overlapping entries shaded in
green.  The chordal relaxation based on this $F$ requires 4 decoupling
variables $u_{jk}$.
 (c) Another chordal extension $F$ and its $X_F$ that have 3 maximal
cliques, outlined and shaded in blue in $X_F$.   The chordal relaxation based
on this $F$ requires 8 decoupling variables $u_{jk}$.
}
\label{fig:ChordExt}
\end{figure*}
%%%%%%%%%%%%%%%%
Suppose we have chosen the chordal extension $F$ in Figure \ref{fig:ChordExt}(b)
with two overlapping cliques $q_1$ and $q_2$ as explained in the caption of the figure.
To decouple the two matrices $X_F(q_1)$ and $X_F(q_2)$, define the $3\times 3$ matrix
\bqn
X'(q_1) & := & \begin{bmatrix}
		x_{11}  &  x_{12}  &  x_{13}  \\
		x_{21}  &  u_{22}  &  u_{23}  \\
		x_{31}  &  u_{32}  &  u_{33}
		\end{bmatrix}
\eqn
where the decoupling variables $u_{jk}$ are constrained to be:
\bq
u_{jk} & = & x_{jk}  \quad \text{for } j, k = 2, 3
\label{eq:u}
\eq
The constraints \eqref{eq:defcr.3} are replaced by
\bq
X_F'(q_1) \succeq 0 & \text{and} & X_F(q_2) \succeq 0
\label{eq:qsdp}
\eq
Define the $7 \times 7$ block-diagonal matrix
\bqn
X' & := & \begin{bmatrix}
		X_F'(q_1)  &  0  \\
		0  &  X_F(q_2) 
		\end{bmatrix}
\eqn
Then  the chordal relaxation \eqref{eq:defcr} can be written in
the standard form \eqref{eq:defsdp} in terms of these
$7 \times 7$ block-diagonal Hermitian matrices:
\begin{subequations}
\bq
\min_{X' \in \mathbb S^7} & & \text{tr } C_0' X'
\\
\text{subject to}  & &  \text{tr } C_l' X' \leq b_l, \ \  l = 1, \dots, L
\\
		& & \text{tr } C_r' X' = 0, \ \ r = 1, 2, 3, 4
\label{eq:Cr'}
\\
		& & X' \succeq 0
\label{eq:X'}
\eq
\label{eq:cr}
\end{subequations}
for appropriate choices of $C_l'$, $l=0, \dots, L$.
The constraint $X' \succeq 0$ in \eqref{eq:X'} is equivalent to the  requirement
\eqref{eq:qsdp} on its submatrices 
and $C_r'$ in \eqref{eq:Cr'} is chosen to enforce the requirement \eqref{eq:u}.
Hence the chordal relaxation \eqref{eq:defcr} is indeed an SDP.

\begin{remark}
\label{remark:cr}
There are two conflicting factors in choosing a good chordal extension $F$.
First an $F$ that contains fewer number of maximal cliques $q$ 
generally involves larger cliques, leading
 to  larger submatrices $X_F(q)$; for example the complete graph $F$ has a single
maximal clique but the corresponding $X_F(q) = X$ has $n^2$ entries and the chordal
relaxation \eqref{eq:defcr} offers no 
computational advantage over solving (in fact it is exactly)
the original SDP \eqref{eq:defsdp}.
This argues for a chordal extension $F$ with smaller,  possibly more, maximal 
cliques $q$.
Second, however, having more maximal cliques $q$ tends to require more decoupling 
variables $u_{jk}$.
Every decoupling variable $u_{jk}$ introduces an extra equality constraint in \eqref{eq:Cr'},
thus increasing the required computational effort.
For instance the transformed problem based on the chordal extension in 
Figure \ref{fig:ChordExt}(b) involves 2 maximal cliques of sizes 3 and 4, and  
4 additional equality constraints in \eqref{eq:Cr'}.
 The transformed problem based on the chordal extension in 
 Figure \ref{fig:ChordExt}(c), on the other hand, requires 3 maximal cliques
 each of size 3, and 8 additional equality constraints.
\end{remark}

In summary even though the  ambient  dimension of the new variable $X'$ is generally
larger than that of the original  $n\times n$ matrix variable $X$ ($7 \times 7$ as
opposed to $5\times 5$ for the example in Figure \ref{fig:ChordExt}(b)), the chordal
relaxation \eqref{eq:defcr} can typically be solved much more efficiently than 
SDP \eqref{eq:defsdp} if $G$ is large and sparse;
for OPF examples, see \cite{MolzahnLesieutre2013, Bose-2014-BFMe-TAC}.   
Choosing a good chordal extension $F$ of $G$ is important but nontrivial. 
See  \cite{Fukuda99exploitingsparsity, Nakata2003} for methods to compute efficient
chordal extensions and sparse SDP solutions.

\appendix[IX: Proofs of main results]
\label{app:proofs1}

% All proofs can be found in their original papers. 
% We prove all the main results covered in this tutorial here because
% (i) it is convenient to have all proofs in one place and in a uniform notation, and 
% (ii) some of the formulations and presentations here are slightly different
% than in the original papers.

\subsection{Proof of Theorem \ref{thm:bim=bfm}: equivalence}

\begin{proof}
Fix any injections $s\in \mathbb C^{n+1}$.
It suffices to show that there is a bijection between the set of $V$ that satisfies
\eqref{eq:bim.1} and the set of $\tilde x := (S, I, \tilde V)$ that satisfies \eqref{eq:bfm}.
Here we use $V$ to denote a power flow solution in $\mathbb V(s)$ and $\tilde V$ to 
denote a component of a power flow solution $\tilde x$ in $\tilde{\mathbb X}(s)$.
We now exhibit a function $g$ that maps $V$ to $\tilde x$
and its inverse $g^{-1}$.

To construct $g$ fix any $V$ that satisfies \eqref{eq:bim.1}.
Define     % $s_0(V)$ by \eqref{eq:bim.1}, 
$\tilde V(V) := V$ and $I(V)$ by \eqref{eq:bfm.1}.  
Then define $S_{jk}(V)$ by \eqref{eq:bfm.2} (only) for $j\rightarrow k \in \tilde E$.  
This specifies $\tilde x := g(V) := (S(V), I(V), \tilde V(V))$.     We only need to show that
$\tilde x$ satisfies \eqref{eq:bfm.3}.  For each $i\rightarrow j \in \tilde E$ we have
by construction
\bqn
S_{ij}(V) - z_{ij} |I_{ij}(V)|^2 & = & 
y_{ij}^H\ V_i (V_i - V_j)^H - y_{ij}^H\ |V_i - V_j|^2
\ \, = \, \ - y_{ij}^H\ V_j (V_j - V_i)^H
\eqn
Hence for each $j\in N^+$
\bqn
\sum_{k: j\rightarrow k} S_{jk}(V)  \, -   
	\sum_{i: i\rightarrow j}  \left( S_{ij}(V) - z_{ij} |I_{ij}(V)|^2 \right)
& = & \sum_{k: j\rightarrow k} y_{jk}^H\ V_j (V_j - V_k)^H \, + 
	\sum_{i: i\rightarrow j}  y_{ij}^H\ V_j (V_j - V_i)^H
\\
& = & \sum_{k: j\sim k} y_{jk}^H\ V_j (V_j^H - V_k^H) \ \ = \ \ s_j(V)
\eqn
which is \eqref{eq:bfm.3}.

Conversely, given an $\tilde x := (S, I, \tilde V)$ that satisfies \eqref{eq:bfm}, 
define $V := g^{-1}(\tilde x) := \tilde V$.  
That $\tilde x$ satisfies \eqref{eq:bfm} implies that $g^{-1}$
as defined is indeed the inverse of $g$ above.  Moreover 
\eqref{eq:bfm.3} implies that $V$ satisfies \eqref{eq:bim.1}.
\end{proof}

\subsection{Proof of Theorem \ref{thm:rank1}: rank-1 characterization}
The proof  is from \cite{Bose-2014-BFMe-TAC}.

\begin{proof}
We will prove (1) $\Rightarrow$ (2) $\Rightarrow$ (3) $\Rightarrow$ (1).
If $W$ is psd rank-1 then all its principle submatrices are psd and of rank 1
(the submatrix cannot be of rank 0 because, by assumption,
$W_{jj}>0$ for all $j\in N^+$).
This implies that its submatrix $W_{c(G)}$ is psd and rank-1.
Hence (1) $\Rightarrow$ (2).

Fix a partial matrix $W_{c(G)}$ that is psd and rank-1 and consider its submatrix $W_G$.
Since each link $(j,k) \in E$ is a clique of $c(G)$ the $2\times 2$ principle submatrix $W_G(j,k)$ 
is psd and rank-1.  
Therefore to prove that (2) $\Rightarrow$ (3), it suffices to show that $W_G$
satisfies the cycle condition \eqref{eq:cyclecond.2}.   We now prove the following statement
by induction on $3 \leq k \leq n+1$: for all cycles $(j_1, \dots, j_k)$ of length $k$ in $c(G)$,
\bq
\sum_{i=1}^k\ \angle \left[W_G \right]_{j_i j_{i+1}} & = & 0   \mod 2\pi
\label{eq:mi}
\eq
where $j_{k+1} := j_1$.
For $k=3$, a cycle $(n_1, n_2, n_3)$ is a clique of $c(G)$ and therefore the following principle submatrix of $W_{c(G)}$:
\bqn
W_{c(G)}(n_1, n_2, n_3) & := & 
	\begin{bmatrix}
		[W_{c(G)}]_{n_1 n_1}  &  [W_{c(G)}]_{n_1 n_2}  & [W_{c(G)}]_{n_1 n_3} \\
		[W_{c(G)}]_{n_2 n_1}  &  [W_{c(G)}]_{n_2 n_2}  & [W_{c(G)}]_{n_2 n_3} \\
		[W_{c(G)}]_{n_3 n_1}  &  [W_{c(G)}]_{n_3 n_2}  & [W_{c(G)}]_{n_3 n_3} 
	\end{bmatrix}
\eqn
defined on the cycle is psd rank-1.   Hence $W_{c(G)}(n_1, n_2, n_3) = uu^H$ for some 
$u := (u_1, u_2, u_3) \in \mathbb C^3$.   Then
\bqn
\sum_{i=1}^3\ \angle \left[ W_G \right]_{j_i j_{i+1}} & = & 
\angle \left[ \left(u_1 u_2^H \right) \left( u_2 u_3^H \right) \left( u_3 u_1^H \right) \right]
\ \, = \, \ 0   \mod 2\pi
\eqn
Suppose \eqref{eq:mi} holds for all cycles in $c(G)$ of length up to $k>3$.  Consider now
a cycle $(j_1, \dots, j_{k+1})$ of length $k+1$ in $c(G)$.   Since  $c(G)$ is chordal
there is a chord, say, $(j_1, j_l)\in E$ for some $1<l<k+1$.   Since both cycles
$(j_1, \dots, j_l)$ and $(j_1, j_l, \dots, j_{k+1})$ satisfy \eqref{eq:mi} we have
\bqn
\sum_{i=1}^{l-1}\ \angle \left[ W_G \right]_{j_i j_{i+1}} \ + \ \angle \left[ W_G \right]_{j_l j_1} & = & 0   \mod 2\pi
\\
\angle \left[ W_G \right]_{j_1 j_l}  \ + \ \sum_{i=l}^{k+1}\ \angle \left[ W_G \right]_{j_i j_{i+1}} & = & 0   \mod 2\pi
\eqn
where $j_{k+2} := j_1$.
Since $W_G$ is Hermitian, adding the above equations yields
\bqn
\sum_{i=1}^{k+1} \ \angle \left[W_G \right]_{j_i j_{i+1}} & = & 0   \mod 2\pi
\eqn
proving \eqref{eq:mi} for $k+1$.  This completes the proof of (2) $\Rightarrow$ (3).

For (3) $\Rightarrow$ (1), fix any partial matrix $W_G$ that is $2\times 2$ psd rank-1 and
satisfies the cycle condition \eqref{eq:cyclecond.2}.   We now construct a psd rank-1 completion
$W$ of $W_G$, by constructing a vector $V\in \mathbb C^{n+1}$ such that $W = VV^H$.
Let 
\bqn
|V_j|  & := & \sqrt{ \left[ W_G \right]_{jj}}
\eqn
Without loss of generality let $\angle V_0 = 0^\circ$; for $j=1, \dots, n$, set
\bqn
\angle V_j  & := & - \sum_{(i,k)\in\mathbb P_j} \angle \left[ W_G \right]_{ik}
\eqn
where $\mathbb P_j$ is \emph{any} path from node 0 to node $j$.
This is well defined because $W_G$ satisfies the cycle condition \eqref{eq:cyclecond.2}.
It can be checked that $W = VV^H$ is indeed a psd rank-1 completion of $W_G$. 
This completes the proof.
\end{proof}

\subsection{Proof of Corollary \ref{coro:eq}: uniqueness of completion}

The proof is from \cite{Bose-2012-BFMe-Allerton}.
\begin{proof}
The proof of Theorem \ref{thm:rank1} shows that given a partial matrix 
$W_{c(G)} \in \mathbb W_{c(G)}$, the (unique) submatrix $W_G$ of $W_{c(G)}$
has a psd rank-1 completion $W\in \mathbb W$.   Therefore to prove the corollary it suffices
to prove that any partial matrix $W_G \in \mathbb W_G$ has a unique psd rank-1
completion $W\in \mathbb W$.   To this end fix a $W_G \in \mathbb W_G$ and suppose
there are two psd rank-1 completions $UU^H$ and $VV^H$ in $\mathbb W$.  
Clearly $|U_j| = |V_j| = \sqrt{\left[ W_G \right]_{jj}}$ for all $j\in N^+$; moreover 
$(\angle U_j, j\in N^+)$ and $(\angle V_j, j\in N^+)$ are solutions of
\bqn
B \theta & = & \beta  \qquad \text{ mod } 2\pi
\eqn
where $B$ is the $m\times n$ reduced incidence matrix defined in Section \ref{subsec:rBFMfs} 
and $\beta\in \mathbb R^m$ with $\beta_{jk} := \angle \left[ W_G \right]_{jk}$.
Since $G$ is connected, $m\geq n$ and hence the solution $\theta$ is unique.
Therefore $U=V$ and the psd rank-1 completion of $W_G$ is unique.
\end{proof}

\subsection{Proof of Theorem \ref{thm: bimR}: BIM feasible sets}

\begin{proof}
First $\mathbb V \sqsubseteq \mathbb W^+ \sqsubseteq \mathbb W_{c(G)}^+ \sqsubseteq \mathbb W_G^+$
follows from Theorem \ref{thm:FS} and the definitions of 
$\mathbb W^+$, $\mathbb W_{c(G)}^+$, $\mathbb W_G^+$ (recall that by assumption the cost
function $C$ depends on $V, W, W_{c(G)}$ only through the submatrix  $W_G$).
Since $c(G)$ is chordal, \cite[Theorem 7]{Grone1984} implies that every $W_{c(G)}$ in
$\mathbb W_{c(G)}^+$ has a psd completion $W$ in $\mathbb W^+$, i.e., 
$ \mathbb W_{c(G)}^+ \sqsubseteq \mathbb W^+$.   Hence $\mathbb W^+ \simeq \mathbb W_{c(G)}^+$.

Suppose $G$ is a tree and consider any chordal extension $c(G)$.   
We need to show that $\mathbb W_G^+ \sqsubseteq \mathbb W_{c(G)}^+$, i.e., given any 
$W_G \in \mathbb W_G^+$ there is a $W_{c(G)} \in \mathbb W_{c(G)}^+$ with the same cost.
Since $G$ is itself chordal, \cite[Theorem 7]{Grone1984} implies that $W_G$ has a 
psd completion $W$
in $\mathbb W^+$.  The submatrix $W_{c(G)}$ of $W$ defined on $c(G)$ is 
the desired partial
matrix in $\mathbb W_{c(G)}^+$ with the same cost.
This proves $\mathbb W_G^+ \sqsubseteq \mathbb W_{c(G)}^+$ and hence
$\mathbb W_G^+ \simeq \mathbb W_{c(G)}^+$ for radial networks.
\end{proof}

\subsection{Proof of Theorem \ref{thm:eqX}: BFM feasible sets}

That $\mathbb X \subseteq \mathbb X_{nc}$ follows from their definitions.  
Hence we only prove $\mathbb{\tilde X} \equiv \mathbb X$, following
\cite{Farivar-2013-BFM-TPS}.

Fix  any $x := (S, \ell, v, s)$ in ${\mathbb X_{nc}}$ and the corresponding 
$\beta(x)$ defined in (\ref{eq:defb}).  
Consider the cycle condition which is an equation in the variable $(\theta, k)$:
	\bq
	\label{eq:theta}
	B\theta & = & \beta(x) + 2\pi k
	\eq
where $k \in \mathbb N^m$ is an integer vector.  
Since $G$ is connected,  $m \geq n$ and rank$(B)=n$.  Hence, given any $k$,
there is at most one $\theta$ that solves \eqref{eq:theta}.  
Conversely, given a solution $\theta$, the corresponding $k$ is unique,
i.e., if the pairs $(\theta, k)$ and $(\theta, k')$ are both solutions of
\eqref{eq:theta}, then $k=k'$.   We therefore sometimes refer to a solution
$(\theta, k)$ simply as $\theta$ and write $k$ as
$k(\theta)$ when we want to emphasize its dependence on $\theta$.
Given any pair $(\theta, k)$ with $\theta \in (-\pi, \pi]^n$, define its 
{\em equivalence class} as
\bqn
\sigma(\theta, k) & := & \{ (\theta + 2\pi \alpha, k+ B\alpha) \ |\ \alpha \in \mathbb{N}^n \}
\eqn
Using the connectedness of $G$ and the definition of $B$, one can argue that $\alpha$ 
must be an integer vector for $k+B\alpha$ to be integral.
We say {\em $\sigma(\theta, k)$ is a solution of \eqref{eq:theta}} if every vector in
$\sigma(\theta, k)$ is a solution of \eqref{eq:theta}, and  
{\em $\sigma(\theta, k)$ is 
the unique solution of \eqref{eq:theta}} if it is the only equivalence class of solutions.
The lemma below implies that if a solution of \eqref{eq:theta} exists then it is unique.

We abuse (to simplify) notation and use $\theta$ to denote either an $n$-dimensional vector
$\theta := (\theta_j, j\in N)$ or an $(n+1)$-dimensional vector $\theta := (\theta_j, j\in N^+)$ 
with $\theta_0 := \angle V_0 := 0^\circ$, depending on the context.   
For each $\theta \in (-\pi, \pi]^{n+1}$, define 
the mapping $\tilde h_\theta(S, \ell, v, s) = (S, I, V, s)$ 
from $\mathbb{R}^{3(m+n+1)}$ to $\mathbb{C}^{2(m+n+1)}$  by:
\begin{subequations}
\bq
V_j & := & \sqrt{v_j} \ e^{\ii \theta_j},  \quad j \in N^+
% \label{eq:bfs.2}
\\
I_{jk} & := & \sqrt{\ell_{jk}}\ e^{\ii (\theta_j - \angle S_{jk})}, \  j\rightarrow k \in \tilde E
% \label{eq:bfs.3}
\eq
\label{eq:defhinv.2}
\end{subequations}
The proof of Theorem \ref{thm:eqX} relies on the following lemma that 
gives a necessary and sufficient condition on $\theta$ for 
$\tilde x := \tilde h_\theta(x)$ to be in $\tilde{\mathbb X}$.
\begin{lemma}
\label{lemma.1}
Fix any $x := (S, \ell, v, s)$ in ${\mathbb X_{nc}}$ and the corresponding 
$\beta(x)$ defined in (\ref{eq:defb}).  Then
\bee
\item $\tilde x := \tilde h_\theta(x)$ is in $\tilde{\mathbb X}$ if and only if
	$(\theta, k(\theta))$ solves \eqref{eq:theta}.
\item there is at most one $\sigma(\theta, k)$ with $\theta\in (-\pi, \pi]^n$, that is the
	unique solution of \eqref{eq:theta} when it exists.
\eee
\end{lemma}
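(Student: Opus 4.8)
The plan is to treat the two assertions separately: Part~1 reduces the membership test $\tilde h_\theta(x)\in\tilde{\mathbb X}$ to a per-edge phase condition, while Part~2 is an elementary uniqueness argument resting on connectedness of $G$. For Part~1 I would fix $x:=(S,\ell,v,s)\in\mathbb X_{nc}$ and set $\tilde x:=\tilde h_\theta(x)=(S,I,V,s)$ as in \eqref{eq:defhinv.2}. The bound constraints \eqref{eq:opfv} and \eqref{eq:opfs} involve only $v$ and $s$, which $\tilde h_\theta$ leaves unchanged, so they are inherited from $x\in\mathbb X_{nc}$; membership in $\tilde{\mathbb X}$ therefore reduces to the three BFM equations \eqref{eq:bfm}. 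Two of these I would dispatch immediately. Equation \eqref{eq:bfm.2} holds for \emph{every} $\theta$: from the definitions $V_j I_{jk}^H=\sqrt{v_j\ell_{jk}}\,e^{\ii\angle S_{jk}}$, and $v_j\ell_{jk}=|S_{jk}|^2$ from \eqref{eq:mdf.3} turns this into $S_{jk}$. Equation \eqref{eq:bfm.3} is likewise automatic, since $|I_{ij}|^2=\ell_{ij}$ by construction makes it coincide with \eqref{eq:mdf.1}. Thus the entire content sits in Ohm's law \eqref{eq:bfm.1}.

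The key step is to show that, given \eqref{eq:mdf.2}--\eqref{eq:mdf.3}, \eqref{eq:bfm.1} holds on an edge $j\to k$ if and only if $\theta_j-\theta_k=\beta_{jk}(x)\pmod{2\pi}$. I would eliminate $I_{jk}$ using the already-verified \eqref{eq:bfm.2} to rewrite \eqref{eq:bfm.1} equivalently as $V_j V_k^H=v_j-z_{jk}^H S_{jk}$ (division by $V_j^H$ is legitimate because $\underline v_j>0$ forces $V_j\neq0$). The magnitude match is then free: substituting \eqref{eq:mdf.2}--\eqref{eq:mdf.3} yields
\[
\left|v_j-z_{jk}^H S_{jk}\right|^2 \;=\; v_j^2-v_j\big(v_j-v_k+|z_{jk}|^2\ell_{jk}\big)+|z_{jk}|^2 v_j\ell_{jk}\;=\;v_j v_k\;=\;\left|V_j V_k^H\right|^2,
\]
so the only remaining requirement is equality of arguments, $\theta_j-\theta_k=\angle\!\left(v_j-z_{jk}^H S_{jk}\right)=\beta_{jk}(x)\pmod{2\pi}$ (well defined, since the common modulus $\sqrt{v_jv_k}$ is positive). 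Collecting this over all edges is exactly $B\theta=\beta(x)\pmod{2\pi}$, i.e. $(\theta,k(\theta))$ solves \eqref{eq:theta}. The main obstacle here is purely bookkeeping: keeping the complex algebra honest in the magnitude cancellation, and confirming that the passage from \eqref{eq:bfm.1} to $V_j V_k^H=v_j-z_{jk}^H S_{jk}$ is a genuine equivalence rather than a one-way implication.

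For Part~2, suppose $(\theta^{(1)},k^{(1)})$ and $(\theta^{(2)},k^{(2)})$ both solve \eqref{eq:theta} with $\theta^{(i)}\in(-\pi,\pi]^n$. Subtracting gives $B\!\left(\theta^{(1)}-\theta^{(2)}\right)=2\pi\!\left(k^{(1)}-k^{(2)}\right)$, so along each edge the corresponding coordinate difference of $\theta^{(1)}-\theta^{(2)}$ is an integer multiple of $2\pi$; with the grounding $\theta_0=0$, connectedness of $G$ propagates this from node $0$ to give $\theta^{(1)}-\theta^{(2)}\in2\pi\,\mathbb Z^n$. Since both vectors lie in $(-\pi,\pi]^n$, each coordinate difference lies in $(-2\pi,2\pi)$ and must vanish, whence $\theta^{(1)}=\theta^{(2)}$ and then $k^{(1)}=k^{(2)}$. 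Thus the representative with $\theta$-part in $(-\pi,\pi]^n$ is unique, and since each class $\sigma(\theta,k)$ contains exactly one such representative (shifting by a nonzero $2\pi\alpha$ leaves $(-\pi,\pi]^n$), there is at most one equivalence class of solutions. The only fact worth flagging is that $B\alpha$ integral forces $\alpha$ integral, which follows from unimodularity of the spanning-tree block $B_T$ and justifies the stated description of $\sigma(\theta,k)$.
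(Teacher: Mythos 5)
Your proposal is correct and follows essentially the same route as the paper's proof: both reduce membership in $\tilde{\mathbb X}$ to Ohm's law \eqref{eq:bfm.1} after noting that \eqref{eq:bfm.2} and \eqref{eq:bfm.3} hold automatically, establish \eqref{eq:bfm.1} via the identity $V_j V_k^H = v_j - z_{jk}^H S_{jk}$ together with the magnitude cancellation from \eqref{eq:mdf.2}--\eqref{eq:mdf.3}, and settle uniqueness using connectedness of $G$ and integrality of $\alpha$ when $B\alpha$ is integral. The only differences are organizational---you run Part~1 as a single chain of equivalences rather than two separate implications, and prove Part~2 directly via canonical representatives in $(-\pi,\pi]^n$ instead of by contradiction---but the underlying identities and facts are the same.
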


\begin{proof}[Proof of Lemma \ref{lemma.1}]
To prove the first claim, 
suppose $(\theta, k)$ is a solution of \eqref{eq:cyclecond.1} for some $k = k(\theta)$.
We need to show that  (\ref{eq:mdf}), \eqref{eq:theta} together with
\eqref{eq:defhinv.2} imply (\ref{eq:bfm}).
Now \eqref{eq:mdf.1} is equivalent to \eqref{eq:bfm.3}.  Moreover
(\ref{eq:mdf.3}) and \eqref{eq:defhinv.2} imply \eqref{eq:bfm.2}.
To prove \eqref{eq:bfm.1}, substitute \eqref{eq:bfm.2} into \eqref{eq:theta} to get
\bqn
\theta_i - \theta_j & = & \angle \left( v_i - z_{ij}^H V_i I_{ij}^H \right) + 2\pi k_{ij}
\ \, = \, \  \angle \ V_i \left(V_i - z_{ij} I_{ij} \right)^H + 2\pi k_{ij}
\eqn
Hence
\bq
\angle V_j  & = & \theta_j \ = \ \angle \left(V_i - z_{ij} I_{ij}\right) - 2\pi k_{ij}
\label{eq:aVj}
\eq
From \eqref{eq:mdf.2}, we have
\bqn
|V_j|^2 & = & |V_i|^2 + |z_{ij}|^2 |I_{ij}|^2 - (z_{ij} S_{ij}^H + z_{ij}^H S_{ij})
\\
	& = & |V_i|^2 + |z_{ij}|^2 |I_{ij}|^2 - (z_{ij} V_i^H I_{ij} + z_{ij}^H V_i I_{ij}^H)
\\
	& = & |V_i - z_{ij} I_{ij}|^2
\eqn
where the second equality follows from \eqref{eq:bfm.2}.
This and (\ref{eq:aVj}) imply $V_j = V_i - z_{ij} I_{ij}$ which is (\ref{eq:bfm.1}).
This completes the proof that if $(\theta, k(\theta))$ is a solution of
\eqref{eq:theta} then $\tilde x := \tilde h_\theta (x)$ lies in $\tilde{\mathbb  X}$.

Conversely suppose $h_{\theta}(x) \in \mathbb X$.  
From \eqref{eq:bfm.1}--\eqref{eq:bfm.2}, we have
$V_i V_j^H = |V_i|^2 - z_{ij}^H S_{ij}$.   Then 
${\theta}_i - {\theta}_j = \beta_{ij} + 2\pi k_{ij}$
for some integer $k_{ij} = k_{ij}(\theta)$.   Hence $(\theta, k)$ solves \eqref{eq:theta}.

For the second claim, 
the discussion preceding the lemma shows that, given any $k\in \mathbb N^m$,
there is at most one $\theta$ that satisfies \eqref{eq:theta}.   If no such $\theta$ exists for any
$k\in\mathbb N^m$, then \eqref{eq:theta} has no solution $(\theta, k)$.  If \eqref{eq:theta} has
a solution $(\theta, k)$, then clearly $(\theta + 2\pi \alpha, k+ B\alpha)$ are also solutions
for all integer vectors $\alpha\in \mathbb{N}^n$.
% (i.e., adding $2\pi$ to $\theta_i$ and an integer vector in the 
% range space of $B$ to $k$ gives another solution). 
Hence we can assume without loss of generality that $\theta\in (-\pi, \pi]^n$.  
We claim that $\sigma(\theta, k)$ is the unique solution of \eqref{eq:theta}.
Otherwise, there is an $(\tilde{\theta}, \tilde{k})\not\in \sigma(\theta, k)$ with
$B\tilde{\theta} = \beta + 2\pi \tilde{k}$.  
Then $B(\tilde{\theta} - \theta) = 2\pi (\tilde{k} - k)$, or 
\bqn
\tilde{k} & = & k + B\alpha 
	\qquad \text{ where } \ \ \alpha := \frac{1}{2\pi} (\tilde\theta - \theta)
\eqn
Note that both $k$ and $\tilde{k}$ are integer vectors in $\mathbb N^m$.
Using the connectedness of G and the definition of $B$, one can argue 
that $\alpha$ must be an integer vector in $\mathbb N^n$ for $k+B\alpha$
to be integral.   Then $\tilde \theta = \theta + 2\pi \alpha$ for some integer
vector $\alpha$. This means 
$(\tilde{\theta}, \tilde{k}) \in \sigma(\theta, k)$, a contradiction. 
\end{proof}
\vspace{0.1in}

\begin{proof}[Proof of Theorem \ref{thm:eqX}]
We now show that $\mathbb{\tilde X} \equiv \mathbb X$ by explicitly specifying
a bijection between these two sets.
Recall the mapping $h: \mathbb{\tilde X} \rightarrow \mathbb X$
defined by $h(S, I, V, s) = (S, \ell, v, s) =: x$ with $\ell_j := |I_j|^2$ and $v_j := |V_j|^2$.
Clearly it maps every $\tilde x \in \tilde{\mathbb X}$ to an $x$ that is in
$\mathbb X$.

To construct its inverse, consider the family of mappings $\tilde h_\theta$ 
defined in  \eqref{eq:defhinv.2} from 
$\mathbb{R}^{3(m+n+1)}$ to $\mathbb{C}^{2(m+n+1)}$, parameterized 
by \emph{every} $\theta \in \mathbb R^n$.  
Lemma \ref{lemma.1}(1) implies that, given any $x$ in $\mathbb X$, 
if we use a specific $\theta$, namely $\theta(x)$ that is a solution of 
\eqref{eq:theta}, then $\tilde h_{\theta(x)}(x)$ maps $x$  to an $\tilde x$ in 
$\tilde{\mathbb X}$.  
Moreover if we restrict $\theta(x)$ to $(-\pi, \pi]^{n+1}$ then 
$\tilde h_{\theta(x)}(x)$ is uniquely defined.  Consider then the overall mapping 
$\tilde h_{\theta(x)} (x)$ from $x\in \mathbb X$ to an $\tilde x \in\tilde{\mathbb X}$
with $\theta(x) \in (-\pi, \pi]^{n+1}$.
Note that for each $x\in \mathbb X$, this mapping selects a possibly different 
mapping from the family of mappings $\tilde h_\theta$, $\theta \in \mathbb R^{n+1}$
with $\theta_0(x) := \angle V_0 := 0^\circ$.
Clearly $h(\tilde h_{\theta(x)}(x)) = x$ and 
$\tilde h_{\theta( h(\tilde x))} (h(\tilde x)) = \tilde x$.
This means that $h^{-1}(\cdot) := \tilde h_{\theta(\cdot)} (\cdot)$ is an inverse of 
$h(\cdot)$.

This completes the proof of Theorem \ref{thm:eqX}.
\end{proof}

Notice that the inverse $h^{-1}(\cdot) := h_{\theta(\cdot)} (\cdot)$ is defined by the 
function $\theta(\cdot)$.  Even though $\theta$ is not unique (if $\theta$ 
defines an inverse of $h$, then $\theta+2\pi \alpha$ also defines a valid inverse), the
inverse $h^{-1}$ is unique because both $\theta$ and $\theta+2\pi \alpha$ map a point 
$x$ in $\mathbb X$ to the same point $\tilde x\in \mathbb{\tilde X}$ (see \eqref{eq:defhinv.2}).
Hence $\mathbb{\tilde X} \equiv \mathbb X$ is indeed well defined.
When we fix $\theta$ to be in $(-\pi, \pi]^n$, it fixes a unique $\theta$ that
defines $h^{-1}$ as well.

% \begin{comment}
\subsection{Proof of Theorem \ref{thm:anglecond}: BFM cycle condition}
The proof is from \cite{Farivar-2013-BFM-TPS}.
%%%%%%%%%%%%%%%%%%%%%%%%%%%%%%%%%%%%%%%%%%%%%%%%%%%%
\begin{proof}
Since $m \geq n$ and rank$(B)=n$, 
we can always find $n$ linearly independent rows of $B$ to form
a basis.    The choice of this basis corresponds to choosing a { spanning} tree of
$G$, which always exists since $G$ is connected \cite[Chapter 5]{Biggs-1993-agt}.
Assume without loss of generality that the first $n$ rows is such a basis
and partition $B$ and $\beta$ accordingly.
Then Lemma \ref{lemma.1} implies that $h_{\theta_*}(x) \in \mathbb X$ 
(in particular, $x$ satisfies the cycle condition \eqref{eq:cyclecond.1})
with $\theta_* \in (-\pi, \pi]^n$ if and only if $(\theta_*, k(\theta_*))$ is the unique solution
of
\bq
\begin{bmatrix}
B_T \\ B_{\perp}
\end{bmatrix}
\theta & = &
\begin{bmatrix}
\beta_T \\ \beta_{\perp}
\end{bmatrix}
+ 2\pi
\begin{bmatrix}
k_T \\ k_{\perp}
\end{bmatrix}
\label{eq:atb.2}
\eq
with $\theta \in (-\pi, \pi]^n$.
Since $T$ is a spanning tree, the $n\times n$ submatrix $B_T$ is invertible.  
Moreover \eqref{eq:atb.2} has a unique solution if and only if 
$B_{\perp} B_T^{-1} (\beta_T + 2\pi k_T) = \beta_{\perp} + 2\pi k_\perp$, 
or if and only if 
\bq
B_\perp B_T^{-1} \beta_T & = & \beta_\perp + 2\pi \hat{k}_\perp
\label{eq:cyclecond.1a}
\eq
for some integer vector $\hat{k}_\perp := k_\perp - B_\perp B_T^{-1}k_T$.   
(\eqref{eq:Binv} below implies that $\hat{k}_\perp$ is indeed an integer vector.)

We can assume without loss of generality that the orientation of the network
graph $\tilde G$ is such that all the links in $T$ are directed away from the root 
node $0$.  
Let $\mathbb P(i\leadsto j)$ denote the unique path from node $i$ to node $j$ in $T$;
in particular, $\mathbb P(0 \leadsto j)$ consists of links all with the same orientation 
as the path and $\mathbb P(j \leadsto 0)$ of links all with the opposite orientation.
Then it can be verified directly that 
\bq
\left[ B_T^{-1} \right]_{ie} & \!\!\! := \!\!\! & \begin{cases}
		-1  &  \text{ if link $e$ is in $\mathbb P(0 \leadsto i)$}
		\\
		0   &  \text{ otherwise}
		\end{cases}
\label{eq:Binv}
\eq 
Hence $B_T^{-1} \beta_T$ represents the (negative of the) sum of angle differences on the
path $\mathbb P(0\leadsto i)$ for each node $i\in T$:
\bqn
\left[ B_T^{-1} \beta_T \right]_i & = & \sum_e \left[B_T^{-1} \right]_{ie} \left[ \beta_T \right]_e
	\ = \ - \sum_{e \in \mathbb P(0\leadsto i)} \left[ \beta_T \right]_e
\eqn
Hence $B_\perp B_T^{-1} \beta_T$ is the sum of angle differences
from node $i$ to  node $j$ along the unique path in $T$, for every link
 $i\rightarrow j \in \tilde E\setminus \tilde E_T$
not in the tree $T$.
To see this, we have, for each link $e := i\rightarrow j \in \tilde E\setminus \tilde E_T$, 
\bqn
\left[ B_\perp B_T^{-1} \beta_T \right]_e & = & \left[ B_T^{-1} \beta_T \right]_i - \left[ B_T^{-1} \beta_T \right]_j
\ \, = \, \  \sum_{e' \in \mathbb P(0\leadsto j)} \left[ \beta_T \right]_{e'}  - 
	   \sum_{e' \in \mathbb P(0\leadsto i)} \left[ \beta_T \right]_{e'}
\eqn
Therefore \eqref{eq:cyclecond.1a} implies, for each link 
$i\rightarrow j \in \tilde E\setminus \tilde E_T$, 
\bq
\beta_{ij} \ - \ \left(
	   \sum_{e' \in \mathbb P(0\leadsto j)} \left[ \beta_T \right]_{e'}  - 
	   \sum_{e' \in \mathbb P(0\leadsto i)} \left[ \beta_T \right]_{e'}
	   \right)  
& = & - 2\pi \hat k_{\perp}
\label{eq:Binv2}
\eq
Consider the basis cycle $c(i,j)$ defined by such a link $i\rightarrow j$ 
outside the spanning tree $T$
\cite[Chapter 5]{Biggs-1993-agt}.   Using the definition of $\tilde \beta$ we have
\bqn
\sum_{e'\in c(i,j)} \tilde \beta_{e'} & = & 
		\tilde \beta_{ij} + \sum_{e' \in \mathbb P(j\leadsto 0)} \tilde\beta_{e'} 
		+ \sum_{e' \in \mathbb P(0\leadsto i)} \tilde\beta_{e'}
\\
& = & 	\beta_{ij} - \sum_{e' \in \mathbb P(0 \leadsto j)} \beta_{e'} 
		+ \sum_{e' \in \mathbb P(0\leadsto i)} \beta_{e'}
\ \, = \, \  -2\pi \left[ \hat k_{\perp} \right]_{ij}
\eqn
i.e., $\sum_{e'\in c(i,j)} \tilde \beta_{e'} = 0$ mod $2\pi$.
Since this holds for all basis cycles, it holds for all undirected cycles.
Therefore the cycle conditions \eqref{eq:cyclecond.1} and \eqref{eq:cyclecond.3}
are equivalent.

Finally consider the unique solution $(\theta_*, k(\theta_*))$ of \eqref{eq:theta}
with $\theta_* \in (-\pi, \pi]^n$.  
By \eqref{eq:atb.2} we have 
$\theta_* = B_T^{-1}\beta_T + 2\pi B_T^{-1} k_T(\theta_*)$.
The definition of $k(\theta_*)$ and the fact $\theta_* \in (-\pi, \pi]^n$
imply that $\theta_* = \mathcal{P}\left(B_T^{-1}\beta_T\right)$;
see the discussion preceding Lemma \ref{lemma.1}.
This completes the proof of Theorem \ref{thm:anglecond}.
\end{proof}

\subsection{Proof of Theorem \ref{thm:bfm.1}: radial networks}
The proof is from \cite{Farivar-2013-BFM-TPS}.
\begin{proof}
Theorem \ref{thm:eqX} and the definition of $\mathbb X^+$
imply
$\mathbb{\tilde X} \equiv \mathbb X \subseteq \mathbb X_{nc} \subseteq \mathbb X^+$.
If the network $\tilde G = T$ is a tree then
$B = B_T$ is $n\times n$ and invertible.  Hence every $x\in \mathbb X_{nc}$
satisfies the cycle condition \eqref{eq:cyclecond.1} and hence is in $\mathbb X$.
Therefore $\mathbb X = \mathbb X_{nc}$.
\end{proof}

\subsection{Proof of Theorem \ref{thm:eq}: equivalence}
The proof is from \cite{Bose-2012-BFMe-Allerton, Bose-2014-BFMe-TAC}.
\begin{proof}
Consider the linear mapping $g :\mathbb W_G^+ \rightarrow \mathbb X^+$ 
defined right after Theorem \ref{thm:eq} by ${x}  = g(W_G)$ where 
\begin{subequations}
\bq
S_{jk} &:= &   y_{jk}^H \left( [W_G]_{jj} - [W_G]_{jk} \right),   
			\qquad  j\rightarrow k \in \tilde E
\label{eq:g.1}
\\
\ell_{jk} &  :=  &   |y_{jk}|^2 \left( [W_G]_{jj} + [W_G]_{kk} - [W_G]_{jk} - [W_G]_{kj} \right),
			\qquad  j\rightarrow k \in \tilde E
\label{eq:g.2}
\\
v_j & := &  [W_G]_{jj}, \qquad   j\in N^+
\label{eq:g.3}
\\
s_j & := & \sum_{k: j\sim k} y_{jk}^H \left( [W_G]_{jj} - [W_G]_{jk} \right), \qquad j\in N^+
\label{eq:g.4}
\eq
\label{eq:g}
\end{subequations}
and the mapping $g^{-1}: \mathbb X^+ \rightarrow \mathbb W_G^+$ with
$W_G = g^{-1}(x)$ where
\begin{subequations}
\bq
[W_G]_{jj} & := & v_j,  \qquad j \in N^+
\label{eq:ginv.1}
\\
\left[ W_G \right]_{jk} & := & v_j - z_{jk}^H S_{jk} \ \, = \, \ \left[ W_G \right]_{kj}^H,
		\qquad j\rightarrow k \in \tilde E
\label{eq:ginv.2}
\eq
\label{eq:ginv}
\end{subequations}
We will prove that $g$ and $g^{-1}$ are indeed inverses of each other in three steps:
(1) $g$ maps every point $W_G \in \mathbb{W}_G^+$ to a point in $\mathbb X^+$;
(2) $g^{-1}$ maps every point $x \in \mathbb X^+$ to a point in $\mathbb{W}_G^+$;
and (3) $g(g^{-1}(x)) = x$ and $g^{-1}(g(W_G)) = W_G$.
This defines a bijection between $\mathbb W_G^+$ and $\mathbb X^+$
and establishes $\mathbb W_G^+ \equiv \mathbb X^+$.  
We will then prove the mappings $g$ ($g^{-1}$) restricted to $\mathbb W_G$ 
$(\mathbb W_{nc})$ and $\mathbb X$ $(\mathbb X_{nc})$ define
the bijection between these sets.

Recall the sets:
 \bqn
  \mathbb W_G^+ &  :=  &  \{W_G  \, \vert \, W_G \text{ satisfies } 
		\eqref{eq:opfW},   W_G(j,k) \succeq 0, \ (j,k)\in E \}
\\
{\mathbb X}^+ &  :=  & 
	\{ x \in \mathbb R^{3(m+n+1)} \ \vert \ x \text{ satisfies } 
	 \eqref{eq:opfv}, \eqref{eq:opfs}, \eqref{eq:mdf.1}, \eqref{eq:mdf.2}, \eqref{eq:mdf.socp}
					  \}
\eqn

\noindent
\emph{Step 1: $g(W_G) \in \mathbb X^+$.}
\eqref{eq:opfW} and \eqref{eq:g.3}--\eqref{eq:g.4} imply  \eqref{eq:opfv} and \eqref{eq:opfs}.
To prove \eqref{eq:mdf.1}, we have for $j \in N^+$
\bqn
&    & \sum_{i: i\rightarrow j} \left( S_{ij} - z_{ij} \ell_{ij} \right) + s_j
\\
& = &
\sum_{i: i\rightarrow j} \left( y_{ij}^H \left( [W_G]_{ii} - [W_G]_{ij} \right) 
	- y_{ij}^H \left( [W_G]_{ii} + [W_G]_{jj} - [W_G]_{ij} - [W_G]_{ji} \right) \right)
	\ + \ s_j
\\
& = &
\sum_{i: i\rightarrow j} \left( - y_{ij}^H \left( [W_G]_{jj}  - [W_G]_{ji} \right) \right)
	\ + \ \sum_{i: i\rightarrow j} y_{ji}^H \left( [W_G]_{jj} - [W_G]_{ji} \right)
	\ + \ \sum_{k: j\rightarrow k} y_{jk}^H \left( [W_G]_{jj} - [W_G]_{jk} \right)
\\
& = & \sum_{k: j\rightarrow k} S_{jk} 
\eqn
as desired.
To prove \eqref{eq:mdf.2}, we have for $j\rightarrow \in \tilde E$
\bqn
2\, \text{Re} \left(z_{jk}^H S_{jk} \right) - |z_{jk}|^2 \ell_{jk}
& = & 
2\, \text{Re} \left( [W_G]_{jj} - [W_G]_{jk} \right) 
	\ - \ \left( [W_G]_{jj} + [W_G]_{kk} - [W_G]_{jk} - [W_G]_{kj} \right)
\\
& = & \left( [W_G]_{jj} - [W_G]_{kk} \right) - [W_G]_{jk}^H + [W_G]_{kj} 
\\
& = & v_j  - v_k
\eqn
as desired.
To prove \eqref{eq:mdf.socp} for each $j\rightarrow \in \tilde E$, use 
$[W_G]_{jj} [W_G]_{jk} \geq | [W_G]_{jk} |^2$ to get
\bq
v_j \ell_{jk} & = & 
\left|y_{jk} \right|^2 \ [W_G]_{jj} \left( [W_G]_{jj} + [W_G]_{kk} - [W_G]_{jk} - [W_G]_{kj} \right)
\nonumber
\\
& \geq & 
\left|y_{jk} \right|^2  \left( [W_G]_{jj}^2 + \left| [W_G]_{jk} \right|^2 
	- [W_G]_{jj} [W_G]_{jk} - [W_G]_{jj} [W_G]_{jk}^H \right)
\label{eq:vl=S}
\\
& = & \left| S_{jk} \right|^2
\nonumber
\eq
as desired.
Hence $g$ maps every $W_G \in \mathbb W_G^+$ to a point in $\mathbb X^+$.

\noindent
\emph{Step 2: $g^{-1}(x) \in \mathbb W_G^+$.}
Clearly \eqref{eq:ginv.1} and  \eqref{eq:opfv} imply \eqref{eq:opfW.2}.
To prove \eqref{eq:opfW.1}, we have for each $j\in N^+$
\bqn
\sum_{k: (j,k)\in E} y_{jk}^H\, \left( [W_G]_{jj} - [W_G]_{jk} \right)
& = &
\sum_{i: i\rightarrow j} y_{ji}^H\, \left( [W_G]_{jj} - [W_G]_{ji} \right)  \ + \ 
\sum_{k: j\rightarrow k} y_{jk}^H\, \left( [W_G]_{jj} - [W_G]_{jk} \right)
\\
& = & 
\sum_{i: i\rightarrow j} y_{ij}^H\, \left( v_j - \left( v_i - z_{ij}^H S_{ij} \right)^H \right) \ + \ 
\sum_{k: j\rightarrow k} y_{jk}^H\, \left( v_j - \left( v_j - z_{jk}^H S_{jk} \right) \right)
\\
& = &
\sum_{k: j\rightarrow k} S_{jk} \ - \ 
\sum_{i: i\rightarrow j} y_{ij}^H\, \left( v_i - v_j - z_{ij} S_{ij}^H \right)
\\
& = &
\sum_{k: j\rightarrow k} S_{jk} \ - \ 
\sum_{i: i\rightarrow j} y_{ij}^H\, 
\left( 2\, \text{Re} (z_{ij}^H S_{ij}) - \left|z_{ij}\right|^2 \ell_{ij} - z_{ij} S_{ij}^H \right)
\eqn
where the second equality follows from \eqref{eq:ginv} and the last equality
from \eqref{eq:mdf.2}.  But
\bqn
\left( 2\, \text{Re} (z_{ij}^H S_{ij}) - z_{ij} S_{ij}^H \right)
& = & 
\left( z_{ij}^H S_{ij} + z_{ij} S_{ij}^H \right) - z_{ij} S_{ij}^H 
\ \ = \ \ z_{ij}^H S_{ij}
\eqn
and hence
\bqn
\sum_{k: (j,k)\in E} y_{jk}^H\, \left( [W_G]_{jj} - [W_G]_{jk} \right)
& = & 
\sum_{k: j\rightarrow k} S_{jk} \ - \ 
\sum_{i: i\rightarrow j}  \left( S_{ij} - z_{ij} \ell_{ij}  \right)  
\ \ = \ \ s_j
\eqn
This and \eqref{eq:opfs} imply \eqref{eq:opfW.1} as desired.
To prove $W_G(j,k) \succeq 0$ for each $(j,k)\in E$, we have
\bqn
[W_G]_{jj} [W_G]_{kk} - \left| [W_G]_{jk} \right|^2
& = & v_j v_k - \left| v_j - z_{jk}^H S_{jk} \right|^2
\\
& = & 
v_j v_k - \left( v_j^2 + \left|z_{jk}\right|^2 \left| S_{jk} \right|^2 -
	2 v_j\, \text{Re} \left( z_{jk}^H S_{jk} \right) \right)
\\
& = & 
v_j \left( v_k - v_j + 2 \, \text{Re} \left( z_{jk}^H S_{jk} \right) \right)
\ - \ \left|z_{jk}\right|^2 \left| S_{jk} \right|^2
\\
& = & 
 \left|z_{jk}\right|^2 \left( v_j \ell_{jk} - \left| S_{jk} \right|^2 \right)
\eqn
where the last equality follows from \eqref{eq:mdf.2}.
Therefore $W_G(j,k) \succeq 0$ follows from \eqref{eq:mdf.socp} as desired.
Hence $g^{-1}$ maps every point $x\in \mathbb X^+$ to a point in $\mathbb W_G^+$.

\noindent
\emph{Step 3: $g(g^{-1}(x)) = x$ and $g^{-1}(g(W_G)) = W_G$.}
The proof uses \eqref{eq:g}, \eqref{eq:ginv}, \eqref{eq:mdf} and follows similar
argument used in Steps 1 and 2, and is thus omitted.
This completes the proof that $g$ and $g^{-1}$ are indeed inverses of each other
and establishes $\mathbb W_G^+ \equiv \mathbb X^+$.

From \eqref{eq:vl=S}, we have 
\bqn
v_j \ell_{jk} = \left|S_{jk}\right|^2
& \text{ if and only if } &
[W_G]_{jj} [W_G]_{kk} = \left| [W_G]_{jk} \right|^2
\eqn
This implies that $g$ and $g^{-1}$ restricted to $\mathbb W_{nc}$ and
$\mathbb X_{nc}$ respectively are inverses of each other as well,
establishing $\mathbb W_{nc} \equiv \mathbb X_{nc}$.

Finally to prove $\mathbb W \equiv \mathbb X$ we need to show that 
the cycle conditions \eqref{eq:cyclecond.2} and  \eqref{eq:cyclecond.1}
are equivalent, i.e., 
$W_G \in \mathbb W_{nc}$ satisfies \eqref{eq:cyclecond.2} if and only if
$x := g(W_G) \in \mathbb X_{nc}$ satisfies \eqref{eq:cyclecond.1}.
Consider any cycle $c$.  Using \eqref{eq:ginv.2} and the definition
of $\tilde \beta(x)$ before Theorem \ref{thm:anglecond}, we have
\bqn
\sum_{(j,k)\in c} \angle W_{jk} & = & \sum_{(j,k)\in c} \angle \tilde \beta_{jk}
\eqn
Theorem \ref{thm:anglecond} therefore implies that the cycle conditions
\eqref{eq:cyclecond.2} and \eqref{eq:cyclecond.1} are equivalent under
$g$ and $g^{-1}$.   

This completes the proof of Theorem \ref{thm:eq}.
\end{proof}
% \end{comment}

\subsection{Proof of Lemma \ref{lemma:ldf1}: voltage bound}

The proofs of \ref{lemma:ldf1}(1)--(3) and Lemma \ref{lemma:ldf2} are
obvious and omitted.   The proof of Lemma
\ref{lemma:ldf1}(4) makes use of  Lemma \ref{lemma:ldf2} and is provided here.

\begin{proof}[Proof of Lemma \ref{lemma:ldf1}(3)]
Fix $v_0$ and an $s \in \mathbb R^{2(n+1)}$.  
Let $x := (S, \ell, v)$  and
$x^{\text{lin}} := (S^{\text{lin}}, \ell^{\text{lin}}, v^{\text{lin}})$ be solutions
of \eqref{eq:df.a} and \eqref{eq:ldf} respectively with the given $v_0$ and $s$,
when $\tilde G$ is oriented so that all links point away from node 0.   We will
prove $v\leq v^\text{lin}$, indirectly using Lemma \ref{lemma:ldf2}.

Specifically consider the model where the network is oriented so that all links 
point \emph{towards} node 0
and consider the solutions $\hat x := (\hat S, \hat \ell, \hat v)$ and 
$\hat x^\text{lin} := (\hat S^\text{lin}, \hat \ell^\text{lin}, \hat v^\text{lin})$ of \eqref{eq:bdf}
and \eqref{eq:lbdf} respectively with the given $v_0$ and $s$.
Then Lemma \ref{lemma:ldf2} implies $\hat v \leq \hat v^\text{lin}$.
We will prove that $v^\text{lin} = \hat v^\text{lin}$ and $v = \hat v$ and hence
$v\leq v^\text{lin}$ as desired.

It is easy to see from \eqref{eq:ldf} and \eqref{eq:lbdf} that 
$S_{jk}^\text{lin} = - \hat S_{kj}^\text{lin}$ and $v^\text{lin} = \hat v^\text{lin}$.

To show $v = \hat v$, define the following functions:
\begin{subequations}
\bq
{S}_{kj}(\hat S, \hat \ell, \hat v) & := & - (\hat S_{jk} - z_{jk} {\hat \ell}_{jk}),  \  j\rightarrow k \in E
\\
{\ell}_{kj}(\hat S, \hat \ell, \hat v) & := & \hat \ell_{jk},  \quad  j\rightarrow k \in E
\\
{v}_j(\hat S, \hat \ell, \hat v) & := & \hat v_j, \quad j \in N
\eq
\label{eq:Shat2S}
\end{subequations}
Note that $(S(\hat x), \ell(\hat x), v(\hat x))$ are \emph{functions} of $\hat x$ that is
a solution of \eqref{eq:lbdf}.\footnote{Since 
$\hat S_{jk}$ represents the sending-end complex power from bus $j$ to bus $k$, 
$- S_{kj}(\hat S, \hat \ell, \hat v)$ represents the received power by bus $k$ from bus $j$, net of 
the line loss $z_{jk}\ell_{jk}$.}
Substituting these functions into \eqref{eq:bdf} yields
\bq
- {S}_{ij}(\hat x) + z_{ji} \hat \ell_{ji} & \!\!\! = \!\!\! & - \sum_{k: k\rightarrow j} S_{jk}(\hat x) + s_j,
 \quad j \in N^+
\nonumber
\\
\!\!\!\!\!\!\!\!\!\!\!\!\!
v_k(\hat x) -  v_j(\hat x) &  \!\!\! = \!\!\!  & 
		2\, \text{Re} \left(z_{kj}^H \left( - S_{jk}(\hat x) + z_{kj} \hat\ell_{kj} \right) \right)
\nonumber
\\
\!\!\!\!\!\!\!\!\!\!\!\!\!
& &   \qquad\ 	 - |z_{kj}|^2 \hat\ell_{kj},    \ \ k\rightarrow j \in \tilde E
\nonumber
\\
\!\!\!\!\!\!\!\!\!\!\!\!\!
\ell_{jk}(\hat x)  v_k(\hat x) &  \!\!\! = \!\!\!  & \left| - S_{jk}(\hat x) + z_{kj}\hat \ell_{kj} \right|^2, 
\ \ k\rightarrow j \in \tilde E
\nonumber
\\
\label{eq:bdf.app1}
\eq
where as before $i$ in the above is the node on the unique path between 
node 0 and node $j$.   Substituting $\hat\ell_{kj} = \ell_{jk}(\hat x)$ and rearranging, 
these equations become
\begin{subequations}
\bq
\!\!\!\!\!\!\!\!\!\!\!\!\!
\sum_{k: k\rightarrow j} S_{jk}(\hat x) & \!\!\! = \!\!\! & {S}_{ij}(\hat x) - z_{ij} \ell_{ij}(\hat x) + s_j,
 \  j \in N^+
\\
\!\!\!\!\!\!\!\!\!\!\!\!\!
v_j(\hat x) -  v_k(\hat x) &  \!\!\! = \!\!\!  & 
		2\, \text{Re} \left(z_{jk}^H S_{jk}(\hat x) \right) - |z_{jk}|^2 \ell_{jk}(\hat x)
\nonumber
\\
& & \qquad\qquad\ \   k\rightarrow j \in \tilde E
\label{eq:bdf.app2}
\\
\!\!\!\!\!\!\!\!\!\!\!\!\!
\ell_{jk}(\hat x)  v_j(\hat x) &  \!\!\! = \!\!\!  & \left|S_{jk}(\hat x) \right|^2, 
\ \ k\rightarrow j \in \tilde E
\label{eq:bdf.app3}
\eq
\label{eq:bdf.app4}
\end{subequations}
To obtain \eqref{eq:bdf.app3} from \eqref{eq:bdf.app1}, note that the right-hand side 
of \eqref{eq:bdf.app1} is
\bqn
& &  \left| - S_{jk}(\hat x) + z_{jk} \ell_{jk}(\hat x) \right|^2 
\\
& = & 
\left| S_{jk}(\hat x) \right|^2 + \left( \left| z_{jk} \right|^2 \ell_{jk}(\hat x) - 
			2\, \text{Re} \left( z_{jk}^H  S_{jk}(\hat x) \right) \right) \ell_{jk}(\hat x)
\\
& = & \left| S_{jk}(\hat x) \right|^2 + \left(  v_k(\hat x) -  v_j(\hat x)   \right) \ell_{jk}(\hat x)
\eqn
where the second equality follows from \eqref{eq:bdf.app2}.
This together with \eqref{eq:bdf.app1} imply \eqref{eq:bdf.app3}.

Notice that \eqref{eq:bdf.app4} is identical to \eqref{eq:df.a} if we reverse the direction
of the graph $\tilde G$.  This means that the functions $(S(\hat x), \ell(\hat x), v(\hat x))$
are solutions of \eqref{eq:df.a}.   Moreover the functions \eqref{eq:Shat2S}
can be compactly written in vector form as:
\bqn
\begin{bmatrix}
S \\ \ell \\ v
\end{bmatrix}
(\hat S, \hat \ell, \hat v)
& = & 
\begin{bmatrix}
-I &  Z  &  0   \\
0 &  I   &  0   \\
0 &  0  &  I
\end{bmatrix}
\,
\begin{bmatrix}
\hat S \\ \hat \ell \\ \hat v
\end{bmatrix}
\eqn
where the $m\times m$ matrix $Z$ is diag$(z_{jk}, j\rightarrow k\in \tilde E)$.
Clearly this mapping is invertible.   This implies that there is a one-one correspondence 
between the solutions $x$ of \eqref{eq:df.a} and the solutions $\hat x$ of \eqref{eq:bdf}.
Moreover they have the same voltage magnitudes $v = \hat v$.

Hence Lemma \ref{lemma:ldf2}(4) implies that $v = \hat v \leq \hat v^\text{lin} = v^\text{lin}$
as desired.
\end{proof}

\newpage
\bibliographystyle{unsrt}
\bibliography{PowerRef-201202}

\begin{thebibliography}{10}

\bibitem{Carpentier62}
J.~Carpentier.
\newblock Contribution to the economic dispatch problem.
\newblock {\em Bulletin de la Societe Francoise des Electriciens},
  3(8):431--447, 1962.

\bibitem{Dommel1968}
H.W. Dommel and W.F. Tinney.
\newblock Optimal power flow solutions.
\newblock {\em Power Apparatus and Systems, IEEE Transactions on},
  PAS-87(10):1866--1876, Oct. 1968.

\bibitem{Powerbk}
J.~A. Momoh.
\newblock {\em Electric Power System Applications of Optimization}.
\newblock Power Engineering. Markel Dekker Inc.: New York, USA, 2001.

\bibitem{Huneault91}
M.~Huneault and F.~D. Galiana.
\newblock A survey of the optimal power flow literature.
\newblock {\em IEEE Trans. on Power Systems}, 6(2):762--770, 1991.

\bibitem{Momoh99a}
J.~A. Momoh, M.~E. El-Hawary, and R.~Adapa.
\newblock A review of selected optimal power flow literature to 1993. {P}art
  {I}: Nonlinear and quadratic programming approaches.
\newblock {\em IEEE Trans. on Power Systems}, 14(1):96--104, 1999.

\bibitem{Momoh99b}
J.~A. Momoh, M.~E. El-Hawary, and R.~Adapa.
\newblock A review of selected optimal power flow literature to 1993. {P}art
  {II}: {N}ewton, linear programming and interior point methods.
\newblock {\em IEEE Trans. on Power Systems}, 14(1):105 -- 111, 1999.

\bibitem{Pandya08}
K.~S. Pandya and S.~K. Joshi.
\newblock A survey of optimal power flow methods.
\newblock {\em J. of Theoretical and Applied Information Technology},
  4(5):450--458, 2008.

\bibitem{Frank2012a}
Stephen Frank, Ingrida Steponavice, and Steffen Rebennack.
\newblock Optimal power flow: a bibliographic survey, {I}: formulations and
  deterministic methods.
\newblock {\em Energy Systems}, 3:221--258, September 2012.

\bibitem{Frank2012b}
Stephen Frank, Ingrida Steponavice, and Steffen Rebennack.
\newblock Optimal power flow: a bibliographic survey, {II}: nondeterministic
  and hybrid methods.
\newblock {\em Energy Systems}, 3:259--289, September 2013.

\bibitem{OPF-FERC-1}
Mary~B. Cain, Richard~P. O'Neill, and Anya Castillo.
\newblock History of optimal power flow and formulations {(OPF Paper 1)}.
\newblock Technical report, US FERC, December 2012.

\bibitem{OPF-FERC-2}
Richard~P. O'Neill, Anya Castillo, and Mary~B. Cain.
\newblock The {IV} formulation and linear approximations of the {AC} optimal
  power flow problem {(OPF Paper 2)}.
\newblock Technical report, US FERC, December 2012.

\bibitem{OPF-FERC-3}
Richard~P. O'Neill, Anya Castillo, and Mary~B. Cain.
\newblock The computational testing of {AC} optimal power flow using the
  current voltage formulations {(OPF Paper 3)}.
\newblock Technical report, US FERC, December 2012.

\bibitem{OPF-FERC-4}
Anya Castillo and Richard~P. O'Neill.
\newblock Survey of approaches to solving the {ACOPF} {(OPF Paper 4)}.
\newblock Technical report, US FERC, March 2013.

\bibitem{OPF-FERC-5}
Anya Castillo and Richard~P. O'Neill.
\newblock Computational performance of solution techniques applied to the
  {ACOPF} {(OPF Paper 5)}.
\newblock Technical report, US FERC, March 2013.

\bibitem{Stott1974}
B~Stott and O.~Alsa\c{c}.
\newblock {Fast decoupled load flow}.
\newblock {\em IEEE Trans. on Power Apparatus and Systems}, PAS-93(3):859--869,
  1974.

\bibitem{Alsac1990}
O.~Alsa\c{c}, J~Bright, M~Prais, and B~Stott.
\newblock {Further developments in LP-based optimal power flow}.
\newblock {\em IEEE Trans. on Power Systems}, 5(3):697--711, 1990.

\bibitem{Overbye2004}
Thomas~J. Overbye, Xu~Cheng, and Yan Sun.
\newblock A comparison of the {AC and DC} power flow models for {LMP}
  calculations.
\newblock In {\em Proceedings of the 37th Hawaii International Conference on
  System Sciences}, 2004.

\bibitem{Purchala2005}
K.~Purchala, L.~Meeus, D.~{Van Dommelen}, and R.~Belmans.
\newblock {Usefulness of DC power flow for active power flow analysis}.
\newblock In {\em Proc. of {IEEE} {PES} General Meeting}, pages 2457--2462.
  IEEE, 2005.

\bibitem{Stott2009}
B.~Stott, J.~Jardim, and O.~Alsa\c{c}.
\newblock {DC Power Flow Revisited}.
\newblock {\em IEEE Trans. on Power Systems}, 24(3):1290--1300, Aug 2009.

\bibitem{Coffrin2012}
Carleton Coffrin and Pascal~Van Hentenryck.
\newblock A linear-programming approximation of {AC} power flows.
\newblock CoRR, abs/1206.3614, 2012.

\bibitem{Jabr2006}
R.A. Jabr.
\newblock {Radial Distribution Load Flow Using Conic Programming}.
\newblock {\em IEEE Trans. on Power Systems}, 21(3):1458--1459, Aug 2006.

\bibitem{Bai2008}
X.~Bai, H.~Wei, K.~Fujisawa, and Y.~Wang.
\newblock Semidefinite programming for optimal power flow problems.
\newblock {\em Int'l J. of Electrical Power \& Energy Systems},
  30(6-7):383--392, 2008.

\bibitem{Farivar2011-VAR-SGC}
Masoud Farivar, Christopher~R. Clarke, Steven~H. Low, and K.~Mani Chandy.
\newblock Inverter {VAR} control for distribution systems with renewables.
\newblock In {\em Proceedings of IEEE SmartGridComm Conference}, October 2011.

\bibitem{Farivar-2013-BFM-TPS}
Masoud Farivar and Steven~H. Low.
\newblock Branch flow model: relaxations and convexification (parts {I, II}).
\newblock {\em IEEE Trans. on Power Systems}, 28(3):2554--2572, August 2013.

\bibitem{Baran1989a}
M.~E. Baran and F.~F Wu.
\newblock {Optimal Capacitor Placement on radial distribution systems}.
\newblock {\em IEEE Trans. Power Delivery}, 4(1):725--734, 1989.

\bibitem{Baran1989b}
M.~E Baran and F.~F Wu.
\newblock {Optimal Sizing of Capacitors Placed on A Radial Distribution
  System}.
\newblock {\em IEEE Trans. Power Delivery}, 4(1):735--743, 1989.

\bibitem{Lavaei2012}
J.~Lavaei and S.~H. Low.
\newblock Zero duality gap in optimal power flow problem.
\newblock {\em IEEE Trans. on Power Systems}, 27(1):92--107, February 2012.

\bibitem{Bai2011}
X.~Bai and H.~Wei.
\newblock A semidefinite programming method with graph partitioning technique
  for optimal power flow problems.
\newblock {\em Int'l J. of Electrical Power \& Energy Systems},
  33(7):1309--1314, 2011.

\bibitem{Jabr2012}
R.~A. Jabr.
\newblock Exploiting sparsity in sdp relaxations of the opf problem.
\newblock {\em Power Systems, IEEE Transactions on}, 27(2):1138--1139, 2012.

\bibitem{MolzahnLesieutre2013}
D.~Molzahn, J.~Holzer, B.~Lesieutre, and C.~DeMarco.
\newblock Implementation of a large-scale optimal power flow solver based on
  semidefinite programming.
\newblock {\em IEEE Transactions on Power Systems}, 28(4):3987--3998, November
  2013.

\bibitem{Bose-2014-BFMe-TAC}
Subhonmesh Bose, Steven~H. Low, Thanchanok Teeraratkul, and Babak Hassibi.
\newblock Equivalent relaxations of optimal power flow.
\newblock {\em IEEE Trans. Automatic Control}, 2014.

\bibitem{luo10}
Z.~Luo, W.~Ma, A.M.C. So, Y.~Ye, and S.~Zhang.
\newblock Semidefinite relaxation of quadratic optimization problems.
\newblock {\em Signal Processing Magazine, IEEE}, 27(3):20 --34, May 2010.

\bibitem{Low2014b}
S.~H. Low.
\newblock Convex relaxation of optimal power flow, {II}: exactness.
\newblock {\em IEEE Trans. on Control of Network Systems}, 2014.
\newblock to appear.

\bibitem{Varaiya-2011-RiskLimiting-PIEEE}
P.~P. Varaiya, F.~F. Wu, and J.~W. Bialek.
\newblock Smart operation of smart grid: Risk-limiting dispatch.
\newblock {\em Proceedings of the IEEE}, 99(1):40--57, January 2011.

\bibitem{Vrakopoulou2013}
M.~Vrakopoulou, M.~Katsampani, K.~Margellos, J.~Lygeros, and G.~Andersson.
\newblock Probabilistic security-constrained ac optimal power flow.
\newblock In {\em PowerTech (POWERTECH), 2013 IEEE Grenoble}, pages 1--6, June
  2013.

\bibitem{BentBienstock2013}
Russell Bent, Daniel Bienstock, and Michael Chertkov.
\newblock Synchronization-aware and algorithm-efficient chance constrained
  optimal power flow.
\newblock In {\em IREP Symposium -- Bulk Power System Dynamics and Control
  (IREP)}, Rethymnon, Greece, August 2013.

\bibitem{KimBaldick1997}
B.~H. Kim and R.~Baldick.
\newblock Coarse-grained distributed optimal power flow.
\newblock {\em IEEE Trans. Power Syst.}, 12(2):932--939, May 1997.

\bibitem{Baldick1999}
R.~Baldick, B.~H. Kim, C.~Chase, and Y.~Luo.
\newblock A fast distributed implementation of optimal power flow.
\newblock {\em IEEE Trans. Power Syst.}, 14(3):858--864, August 1999.

\bibitem{HugAndersson2009}
G.~Hug-Glanzmann and G.~Andersson.
\newblock Decentralized optimal power flow control for overlapping areas in
  power systems.
\newblock {\em IEEE Trans. Power Syst.}, 24(1):327?336, February 2009.

\bibitem{Nogales2003}
F.~J. Nogales, F.~J. Prieto, and A.~J. Conejo.
\newblock A decomposition methodology applied to the multi-area optimal power
  flow problem.
\newblock {\em Ann. Oper. Res.}, 120:99--116, 2003.

\bibitem{lam2012distributed}
Albert Lam, Baosen Zhang, and David~N Tse.
\newblock Distributed algorithms for optimal power flow problem.
\newblock In {\em IEEE CDC}, pages 430--437, 2012.

\bibitem{KraningBoyd2013}
M.~Kraning, E.~Chu, J.~Lavaei, and S.~Boyd.
\newblock Dynamic network energy management via proximal message passing.
\newblock {\em Foundations and Trends in Optimization}, 1(2):70--122, 2013.

\bibitem{Turitsyn11}
K.~Turitsyn, P.~S\u{u}lc, S.~Backhaus, and M.~Chertkov.
\newblock Options for control of reactive power by distributed photovoltaic
  generators.
\newblock {\em Proc. of the IEEE}, 99(6):1063 --1073, June 2011.

\bibitem{LamZhang2012}
Albert~Y.S. Lam, Baosen Zhang, Alejandro Dom\'{i}nguez-Garc\'{i}a, and David
  Tse.
\newblock Optimal distributed voltage regulation in power distribution
  networks.
\newblock {\em arXiv}, April 2012.

\bibitem{Phan2012}
Dzung~T. Phan.
\newblock Lagrangian duality and branch-and-bound algorithms for optimal power
  flow.
\newblock {\em Operations Research}, 60(2):275--285, March/April 2012.

\bibitem{Gopalakrishnan2012}
A.~Gopalakrishnan, A.U. Raghunathan, D.~Nikovski, and L.T. Biegler.
\newblock Global optimization of optimal power flow using a branch amp; bound
  algorithm.
\newblock In {\em Proc. of the 50th Annual Allerton Conference on
  Communication, Control, and Computing (Allerton)}, pages 609--616, Oct 2012.

\bibitem{JoszPanciatici2013}
C.~Josz, J.~Maeght, P.~Panciatici, and J.~Ch. Gilbert.
\newblock Application of the moment?sos approach to global optimization of the
  opf problem.
\newblock arXiv, November 2013.

\bibitem{ChiangBaran1990}
H-D. Chiang and M.~E. Baran.
\newblock On the existence and uniqueness of load flow solution for radial
  distribution power networks.
\newblock {\em IEEE Trans. Circuits and Systems}, 37(3):410--416, March 1990.

\bibitem{Chiang1991}
Hsiao-Dong Chiang.
\newblock A decoupled load flow method for distribution power networks:
  algorithms, analysis and convergence study.
\newblock {\em International Journal Electrical Power Energy Systems},
  13(3):130--138, June 1991.

\bibitem{Cespedes1990}
R.~Cespedes.
\newblock New method for the analysis of distribution networks.
\newblock {\em IEEE Trans. Power Del.}, 5(1):391--396, January 1990.

\bibitem{Exposito1999}
A.~G. Exp\'{o}sito and E.~R. Ramos.
\newblock Reliable load flow technique for radial distribution networks.
\newblock {\em IEEE Trans. Power Syst.}, 14(13):1063--1069, August 1999.

\bibitem{Taylor2012-TPS}
Joshua~A. Taylor and Franz~S. Hover.
\newblock Convex models of distribution system reconfiguration.
\newblock {\em IEEE Trans. Power Systems}, 2012.

\bibitem{LavaeiTseZhang2013}
J.~Lavaei, D.~Tse, and B.~Zhang.
\newblock Geometry of power flows and optimization in distribution networks.
\newblock {\em Power Systems, IEEE Transactions on}, PP(99):1--12, 2013.

\bibitem{Kersting2002}
W.~H. Kersting.
\newblock {\em Distribution systems modeling and analysis}.
\newblock CRC, 2002.

\bibitem{Shirmohammadi1988}
D.~Shirmohammadi, H.~W. Hong, A.~Semlyen, and G.~X. Luo.
\newblock A compensation-based power flow method for weakly meshed distribution
  and transmission networks.
\newblock {\em IEEE Transactions on Power Systems}, 3(2):753--762, May 1988.

\bibitem{Gan-2014-BFMt-TAC}
Lingwen Gan, Na~Li, Ufuk Topcu, and Steven~H. Low.
\newblock Exact convex relaxation of optimal power flow in radial networks.
\newblock {\em IEEE Trans. Automatic Control}, 2014.

\bibitem{Bose-2012-BFMe-Allerton}
Subhonmesh Bose, Steven~H. Low, and Mani Chandy.
\newblock Equivalence of branch flow and bus injection models.
\newblock In {\em 50th Annual Allerton Conference on Communication, Control,
  and Computing}, October 2012.

\bibitem{GanThomas2000}
Deqiang Gan, Robert~J. Thomas, and Ray~D. Zimmerman.
\newblock Stability-constrained optimal power flow.
\newblock {\em IEEE Trans. Power Systems}, 15(2):535--540, May 2000.

\bibitem{Capitanescua2011}
F.~Capitanescua, J.L.~Martinez Ramosb, P.~Panciaticic, D.~Kirschend, A.~Marano
  Marcolinie, L.~Platbroodf, and L.~Wehenkelg.
\newblock State-of-the-art, challenges, and future trends in security
  constrained optimal power flow.
\newblock {\em Electric Power Systems Research}, 81(8):1731--1741, August 2011.

\bibitem{Stott2012}
Brian Stott and Ongun Alsa\c{c}.
\newblock Optimal power flow: Basic requirements for real-life problems and
  their solutions.
\newblock {\em White paper}, July 2012.

\bibitem{Bose-2012-QCQPt}
S.~Bose, D.~Gayme, K.~M. Chandy, and S.~H. Low.
\newblock Quadratically constrained quadratic programs on acyclic graphs with
  application to power flow.
\newblock arXiv:1203.5599v1, March 2012.

\bibitem{Grone1984}
R.~Grone, C.~R. Johnson, E.~M. S\'{a}, and H.~Wolkowicz.
\newblock Positive definite completions of partial {Hermitian} matrices.
\newblock {\em Linear Algebra and its Applications}, 58:109--124, 1984.

\bibitem{Fukuda99exploitingsparsity}
Mituhiro Fukuda, Masakazu Kojima, Kazuo Murota, and Kazuhide Nakata.
\newblock Exploiting sparsity in semidefinite programming via matrix completion
  {I}: General framework.
\newblock {\em SIAM Journal on Optimization}, 11:647--674, 2001.

\bibitem{Nakata2003}
K.~Nakata, K.~Fujisawa, M.~Fukuda, M.~Kojima, and K.~Murota.
\newblock Exploiting sparsity in semidefinite programming via matrix completion
  {II}: Implementation and numerical results.
\newblock {\em Mathematical Programming}, 95(2):303--327, 2003.

\bibitem{Zhang2013}
Baosen Zhang and David Tse.
\newblock Geometry of the injection region of power networks.
\newblock {\em IEEE Trans. Power Systems}, 28(2):788--797, 2013.

\bibitem{Andersen2013}
Martin~S. Andersen, Anders Hansson, and Lieven Vandenberghe.
\newblock Reduced-complexity semidefinite relaxations of optimal power flow
  problems.
\newblock arXiv:1308.6718v1, August 2013.

\bibitem{ZimmermanChiang1995}
R.D. Zimmerman and Hsiao-Dong Chiang.
\newblock Fast decoupled power flow for unbalanced radial distribution systems.
\newblock {\em Power Systems, IEEE Transactions on}, 10(4):2045--2052, 1995.

\bibitem{Srinivas2000}
M.S. Srinivas.
\newblock Distribution load flows: a brief review.
\newblock In {\em Power Engineering Society Winter Meeting, 2000. IEEE},
  volume~2, pages 942--945 vol.2, 2000.

\bibitem{Baran1989c}
M.E. Baran and F.F. Wu.
\newblock {Network reconfiguration in distribution systems for loss reduction
  and load balancing}.
\newblock {\em IEEE Trans. on Power Delivery}, 4(2):1401--1407, Apr 1989.

\bibitem{Boyd2004}
S.~P. Boyd and L.~Vandenberghe.
\newblock {\em {Convex optimization}}.
\newblock Cambridge University Press, 2004.

\bibitem{wolkowicz00}
H.~Wolkowicz, R.~Saigal, and L.~Vandenberghe.
\newblock {\em {Handbook of semidefinite programming: theory, algorithms, and
  applications}}, volume~27.
\newblock Springer Netherlands, 2000.

\bibitem{Lobo1998}
Miguel~Soma Lobo, Lieven Vandenberghe, Stephen Boyd, and Herv\'{e} Lebret.
\newblock Applications of second-order cone programming.
\newblock {\em Linear Algebra and its Applications}, 284:193--228, 1998.

\bibitem{zhang00}
S.~Zhang.
\newblock {Quadratic maximization and semidefinite relaxation}.
\newblock {\em Mathematical Programming}, 87(3):453--465, 2000.

\bibitem{KimKojima2003}
S.~Kim and M.~Kojima.
\newblock Exact solutions of some nonconvex quadratic optimization problems via
  {SDP} and {SOCP} relaxations.
\newblock {\em Computational Optimization and Applications}, 26(2):143--154,
  2003.

\bibitem{Fulkerson1965}
D.~R. Fulkerson and O.~A. Gross.
\newblock Incidence matrices and interval graphs.
\newblock {\em Pacific Journal of Mathematics}, 15(3):835--855, 1965.

\bibitem{Rose1976}
Donald~J. Rose, Robert~Endre Tarjan, and George~S. Lueker.
\newblock Algorithmic aspects of vertex elimination on graphs.
\newblock {\em SIAM Journal on Computing}, 5(2):266--283, 1976.

\bibitem{Biggs-1993-agt}
Norman Biggs.
\newblock {\em Algebraic graph theory}.
\newblock Cambridge University Press, 1993.
\newblock Cambridge Mathematical Library.

\end{thebibliography}

\end{document}